\DeclareFontFamily{U}{mathx}{\hyphenchar\font45}
\DeclareFontShape{U}{mathx}{m}{n}{
      <5> <6> <7> <8> <9> <10>
      <10.95> <12> <14.4> <17.28> <20.74> <24.88>
      mathx10
      }{}
\newcommand{\abs}[1]{\vert #1\vert}
\newtheorem{theorem}{Theorem}[section]
\newtheorem*{theorema}{Theorem A}
\newtheorem*{theoremb}{Theorem B}
\newtheorem*{theoremc}{Theorem C}
\newtheorem{lemma}[theorem]{Lemma}
\newtheorem{corollary}[theorem]{Corollary}
\newtheorem{proposition}[theorem]{Proposition}
\newtheorem{fact}[theorem]{Fact}
\newtheorem*{claim*}{Claim}
\theoremstyle{remark}
\newtheorem{remark}[theorem]{Remark}
\theoremstyle{definition}
\newtheorem{definition}[theorem]{Definition}
\newtheorem{example}[theorem]{Example}
\numberwithin{equation}{section}
\newcommand{\nn}[1]{{\left\vert\kern-0.25ex\left\vert\kern-0.25ex\left\vert #1 
    \right\vert\kern-0.25ex\right\vert\kern-0.25ex\right\vert}}
\renewcommand{\leq}{\leqslant}
\renewcommand{\geq}{\geqslant}
\newcounter{smallromans}
\newenvironment{romanenumerate}
{\begin{list}{{\normalfont\textrm{(\roman{smallromans})}}}%
  {\usecounter{smallromans}\setlength{\itemindent}{0cm}%
   \setlength{\leftmargin}{5.5ex}\setlength{\labelwidth}{5.5ex}%
   \setlength{\topsep}{.5ex}\setlength{\partopsep}{.5ex}%
   \setlength{\itemsep}{0.1ex}}}%
{\end{list}}
\newcommand{\N}{\mathbb{N}}
\newcommand{\C}{\mathbb{C}}
\newcommand{\e}{\varepsilon}
\newcommand{\p}{\varphi}
\newcommand{\n}{\left\Vert\cdot\right\Vert}
\newcommand{\auc}{\overline{\delta}}
\newcounter{smallromansdash}
\newcounter{bigromans} 
  {\end{list}}
\begin{document}
\title[Separated sets and Auerbach systems]{Separated sets and Auerbach systems\\in Banach spaces}

\author[P.~H\'ajek]{Petr H\'ajek}
\address[P.~H\'ajek]{Department of Mathematics\\ Faculty of Electrical Engineering\\Czech Technical University in Prague\\Technick\'a 2, 166 27 Praha 6\\ Czech Republic}
\email{hajek@math.cas.cz}

\author[T.~Kania]{Tomasz Kania}
\address[T.~Kania]{Mathematical Institute\\Czech Academy of Sciences\\\v Zitn\'a 25 \\115 67 Praha 1\\Czech Republic \\ and \\ Institute of Mathematics and Computer Science,\\ Jagiellonian University,\\ {\L}ojasiewicza 6, 30-348 Krak\'{o}w, Poland}
\email{kania@math.cas.cz, tomasz.marcin.kania@gmail.com}

\author[T.~Russo]{Tommaso Russo}
\address[T.~Russo]{Department of Mathematics\\Faculty of Electrical Engineering\\Czech Technical University in Prague\\Technick\'a 2, 166 27 Praha 6\\ Czech Republic}
\email{russotom@fel.cvut.cz}

\thanks{Research of the first-named author was supported in part by OPVVV CAAS CZ.02.1.01/0.0/0.0/16$\_$019/0000778.
The second-named author acknowledges with thanks funding received from GA\v{C}R project 17-27844S; RVO 67985840 (Czech Republic). 
Research of the third-named author was supported by the project International Mobility of Researchers in CTU CZ.02.2.69/0.0/0.0/16$\_$027/0008465 and by Gruppo Nazionale per l'Analisi Matematica, la Probabilit\`a e le loro Applicazioni (GNAMPA) of Istituto Nazionale di Alta Matematica (INdAM), Italy.}

\keywords{Banach space, Kottman's theorem, the Elton--Odell theorem, unit sphere, separated set, reflexive space, Auerbach system, exposed point}
\subjclass[2010]{46B20, 46B04 (primary), and 46A35, 46B26 (secondary).}
\date{\today}

\begin{abstract}
The paper elucidates the relationship between the density of a Banach space and possible sizes of Auerbach systems and  well-separated subsets of its unit sphere. For example, it is proved that for a large enough space $X$, the unit sphere $S_X$ always contains an uncountable $(1+)$-separated subset. In order to achieve this, new results concerning the existence of large Auerbach systems are established, that happen to be sharp for the class of WLD spaces. In fact, we offer the first consistent example of a non-separable WLD Banach space that contains no uncountable Auerbach system, as witnessed by a renorming of $c_0(\omega_1)$. Moreover, the following optimal results for the classes of, respectively, reflexive and super-reflexive spaces are established: the unit sphere of an infinite-dimensional reflexive space contains a~symmetrically $(1+\varepsilon)$-separated subset of any regular cardinality not exceeding the density of $X$; should the space $X$ be super-reflexive, the unit sphere of $X$ contains such a subset of cardinality equal to the density of $X$. The said problem is studied for other classes of spaces too, including WLD spaces, RNP spaces, or strictly convex ones.
\end{abstract}
\maketitle

\section{Introduction}
Over the last years, a renewed interest and a rapid progress in delineating the structure of both qualitative and quantitative properties of well-separated subsets of the unit sphere of a~Banach space have been observed. Perhaps the first spark was lit by Mercourakis and Vassiliadis \cite{mv} who have identified certain classes of compact Hausdorff spaces $K$ for which the unit sphere of the Banach space $C(K)$ of all scalar-valued continuous functions on $K$ contains an uncountable (1+)-separated set, that is, an uncountable set whose distinct elements have distances strictly greater than 1. They also asked whether an `uncountable' analogue of Kottman's theorem, or even the Elton--Odell theorem, is valid for every non-separable $C(K)$-space. \smallskip 

Kottman's theorem asserts that the unit sphere of an infinite-dimensional normed space contains an infinite (1+)-separated subset. Elton and Odell improved Kottman's theorem by showing that in such circumstances one may find $\varepsilon > 0$ and an infinite subset of the sphere that is $(1+\varepsilon)$-separated. One may thus extrapolate Mercourakis' and Vassiliadis' question to all spaces and ask for the following `uncountable' version of Kottman's theorem.\smallskip
\begin{center}\emph{Must the unit sphere of a non-separable Banach space contain\\ an~uncountable (1+)-separated subset? }\end{center}
Unfortunately, the answer to this question remains unknown. It was already observed by Elton and Odell themselves that an `uncountable' version of the Elton--Odell theorem is false (\cite[p.~109]{E-O}), as witnessed by the space $c_0(\omega_1)$. Kochanek and the second-named author answered this question affirmatively (\cite[Theorem B]{KaKo}) for the class of all non-separable $C(K)$-spaces as well as all non-separable reflexive spaces (\cite[Theorem A]{KaKo}). \smallskip 

Koszmider proved that an `uncountable' version of the Elton--Odell theorem for non-separable $C(K)$-spaces is independent of ZFC \cite{Koszmider}. Interestingly, if the unit sphere of a~$C(K)$-space contains a~$(1+\varepsilon)$-separated subset for some $\varepsilon>0$, then it also contains a~2-separated subset of the same cardinality (\cite[Theorem 1]{mv}). Very recently, C\'uth, Kurka, and Vejnar \cite{CuKuVe} improved significantly \cite[Theorem B]{KaKo} by identifying a very broad class of $C(K)$-spaces whose unit spheres contain uncountable 2-separated subsets, so in the class of $C(K)$-spaces the picture is quite full; still, a purely topological characterisation of such compact spaces is still desirable. \smallskip

In the light of the recent research of the present authors \cite{HKR-sym-sep} concerning separation in separable Banach spaces, the above-discussed problems gained new natural `symmetric' counterparts, where separation, that is the distance $\|x-y\|$, is replaced with the symmetric distance, \emph{i.e.}, $\|x\pm y\|$.  \smallskip 

The primary aim of this work is to develop results concerning constructions of as large separated subsets of the unit sphere of a Banach space as possible that are moreover symmetrically separated sets, wherever possible. Advancing the understanding that Auerbach systems can be profitably exploited to approach the above problem is important part of our contribution. Consequently, a substantial part of the paper comprises general results on the existence of Auerbach systems in Banach spaces, that we shall refer to in the last part of the Introduction. (The proofs of the results presented in the Introduction are postponed to subsequent sections, where the necessary terminology is also explained.) \smallskip

Our first result to be presented states that, for large enough Banach spaces, the main question has a positive answer indeed. In the particular case where the underlying Banach space $X$ is moreover weakly Lindel\"of determined (WLD), we are able to obtain a better relation between the density character of the space $X$ and cardinality of the $(1+)$-separated subset of the sphere. It turns out that under certain assumptions that are still rather mild, it is not possible to strengthen the results and obtain either $(1+)$-separated families of unit vectors, with cardinality larger than $\omega_1$, or uncountable $(1+\e)$-separated families. We have already mentioned that the unit sphere of $c_0(\omega_1)$ does not contain uncountable $(1+\e)$-separated subsets. Koszmider noticed that every (1+)-separated subset in the unit sphere of $c_0(\Gamma)$ has cardinality at most continuum (\cite[Proposition 4.13]{KaKo}). We optimise this result by actually decreasing the continuum to $\omega_1$, which shows that even for rather well-behaved and very large spaces the size of $(1+)$-separated subsets of the sphere may be relatively small. \smallskip 

Let us then present our first main result formally.\bigskip
\begin{theorema}${}$
\begin{romanenumerate}
\item Let $X$ be a Banach space with $w^*\text{-}{\rm dens}\, X^*>\exp_2\mathfrak{c}$. Then both $X$ and $X^*$ contain uncountable symmetrically $(1+)$-separated families of unit vectors.
\item Let $X$ be a WLD Banach space with ${\rm dens}\, X>\mathfrak{c}$. Then the unit spheres of $X$ and of $X^*$ contain uncountable symmetrically $(1+)$-separated subsets.
\item Furthermore, if $X=c_0(\Gamma)$, then every $(1+)$-separated subset of $S_{c_0(\Gamma)}$ has cardinality at most $\omega_1$.
\end{romanenumerate}
\end{theorema}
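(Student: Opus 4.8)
The plan is to prove the three items by quite different means: (iii) by a self-contained combinatorial argument inside $c_0(\Gamma)$, and (i)--(ii) by first manufacturing a long Auerbach system and then upgrading the automatic estimate $\|x_\alpha\pm x_\beta\|\ge1$ to a strict one on an uncountable index set by a Ramsey-type argument. For (iii), suppose towards a contradiction that $\{y_\alpha:\alpha<\omega_2\}\subseteq S_{c_0(\Gamma)}$ is $(1+)$-separated. The key preliminary observation is that, whenever $\|y_\alpha-y_\beta\|>1$, a coordinate $\gamma$ realising this must have $y_\alpha(\gamma),y_\beta(\gamma)$ of opposite signs with $|y_\alpha(\gamma)|+|y_\beta(\gamma)|>1$ (as both have modulus $\le1$); hence $\gamma\in\operatorname{supp}y_\alpha\cap\operatorname{supp}y_\beta$ and $\gamma$ belongs to the \emph{peak set} $G_\delta:=\{\gamma:|y_\delta(\gamma)|>\tfrac12\}$ of at least one of $\delta=\alpha,\beta$; moreover each $G_\delta$ is finite (since $y_\delta\in c_0$) and non-empty (the supremum is attained).

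I would then: pigeonhole so that $|G_\alpha|$ is constant on a set of size $\omega_2$; apply the $\Delta$-system lemma for finite sets, valid in ZFC at $\omega_2$, to get $A\subseteq\omega_2$ with $|A|=\omega_2$ and $\{G_\alpha:\alpha\in A\}$ a sunflower with finite core $R$ and pairwise disjoint petals $G_\alpha\setminus R$; and, $R$ being finite, use compactness of $[-1,1]^R$ to thin $A$ once more so that $\|y_\alpha|_R-y_\beta|_R\|_\infty<\tfrac23$ for all $\alpha,\beta\in A$. For $\alpha\ne\beta$ in $A$, a coordinate $\gamma$ realising $\|y_\alpha-y_\beta\|>1$ then lies in $G_\alpha\cup G_\beta$ but not in $R$ (else the distance would be $<\tfrac23$), hence in exactly one petal, say $\gamma\in G_\alpha\setminus R$, and in addition $\gamma\in\operatorname{supp}y_\beta$. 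Put $R_\beta:=\{\alpha\in A:(G_\alpha\setminus R)\cap\operatorname{supp}y_\beta\ne\emptyset\}$; since disjoint non-empty petals can meet the countable set $\operatorname{supp}y_\beta$ only countably often, each $R_\beta$ is countable, while the previous sentence gives $\alpha\in R_\beta$ or $\beta\in R_\alpha$ for every pair in $A$. This is impossible: choosing $A_0\subseteq A$ of size $\omega_1$ and then $\beta^*\in A\setminus\bigl(A_0\cup\bigcup_{\alpha\in A_0}R_\alpha\bigr)$ --- legitimate, as the removed set has size $\le\omega_1<\omega_2$ --- forces $A_0\subseteq R_{\beta^*}$, contradicting countability of $R_{\beta^*}$.

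For (i) and (ii) the engine is the paper's theory of large Auerbach systems: the hypotheses $w^*\text{-}\operatorname{dens}X^*>\exp_2\mathfrak c$ and, respectively, ``$X$ WLD with $\operatorname{dens}X>\mathfrak c$'' are calibrated to yield an Auerbach system $\{(x_\alpha,f_\alpha)\}_{\alpha<\mathfrak c^+}$ in $X\times X^*$ (for WLD spaces the projectional/Markushevich-basis structure gives this directly from $\operatorname{dens}X>\mathfrak c$; in the general case one pays an extra exponential, the dual entering through its $w^*$-density). As $\{(f_\alpha,\widehat{x_\alpha})\}$ is then an Auerbach system in $X^*\times X^{**}$, it is enough to extract an uncountable symmetrically $(1+)$-separated family from $\{x_\alpha\}$. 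One has $\|x_\alpha\pm x_\beta\|\ge|f_\alpha(x_\alpha\pm x_\beta)|=1$ for free, so I would colour the pairs $\alpha<\beta$ by the cell of a fixed fine grid of $[1,2]^2$ containing $(\|x_\alpha+x_\beta\|,\|x_\alpha-x_\beta\|)$ and apply Erd\H os--Rado ($\mathfrak c^+=(2^{\aleph_0})^+\to(\omega_1)^2_{\aleph_0}$) to reach an uncountable homogeneous $A$. If on $A$ one of the two coordinates stays bounded away from $1$, that sign is already uniformly $(1+\varepsilon)$-separated and a second colouring handles the other; the remaining (delicate) case is $\|x_\alpha\pm x_\beta\|\to1$ for both signs, where $(x_\alpha)_{\alpha\in A}$ is, after refinement, almost isometrically $c_0$-like, and one concludes by transplanting into it the explicit uncountable symmetrically $(1+)$-separated family available in $S_{c_0(\omega_1)}$: indexing the basis by $\omega_1\times\{0,1\}$, let $y_\alpha$ have values $+1,-1$ at $(\alpha,0),(\alpha,1)$ and value $-2^{-n-1}$ at both of $(\beta,0),(\beta,1)$ whenever $\beta$ is the $n$-th element of $\{\gamma:\gamma<\alpha\}$; then for $\beta<\alpha$ one has $\|y_\alpha-y_\beta\|_\infty\ge1+2^{-n-1}$ and $\|y_\alpha+y_\beta\|_\infty\ge1+2^{-n-1}$.

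I expect the main obstacles to be: for (i)--(ii), the construction of the long Auerbach system itself (the substance of the Auerbach-systems part of the paper) and, within the Ramsey step, an honest treatment of the $c_0$-like homogeneous case --- making ``almost isometrically $c_0$-like'' strong enough (control of all finite sums, not merely of pairs) to support the explicit construction; and for (iii), the realisation that one must \emph{not} run a $\Delta$-system argument on the countable supports --- this only recovers Koszmider's bound $\le\mathfrak c$, because the $\Delta$-system lemma for countable families requires a CH-type hypothesis at $\omega_2$ --- so that the improvement to $\omega_1$ must instead exploit the two elementary facts that separation of any pair is witnessed inside a \emph{finite} peak set (so the ZFC $\Delta$-system lemma for finite sets applies) and that a set of cardinality $\omega_2$ cannot be covered by $\omega_1$ countable sets.
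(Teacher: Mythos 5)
Your argument for clause (iii) is correct and is essentially the paper's: both proofs run the finite $\Delta$-system lemma on the finite peak sets $\{\gamma\colon|y_\alpha(\gamma)|\geq 1/2\}$ (rather than on the countable supports, for exactly the reason you give) and use compactness of the unit ball over the finite core to force any coordinate witnessing $\|y_\alpha-y_\beta\|>1$ out of the core and into exactly one petal. Your endgame --- the countable sets $R_\beta$ together with the fact that a set of size $\omega_2$ cannot be covered by $\omega_1$ countable sets --- is a clean repackaging of the paper's choice of a single bad pair $(\alpha,\beta)$, and it closes.

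For clauses (i) and (ii) the architecture you propose --- manufacture an Auerbach system of cardinality $\mathfrak{c}^+$, pass to the dual system to cover $X^*$, then run Erd\H{o}s--Rado on pairs --- is exactly the paper's, but two genuine gaps remain. The first is the production of the Auerbach systems, which you acknowledge leaving open; note in particular that for WLD spaces an M-basis is \emph{not} an Auerbach system (nothing controls the norms of the coordinate functionals after normalisation), so ``the Markushevich-basis structure gives this directly'' is not yet an argument. The paper instead replaces each coordinate functional by a genuine norming functional $x_\alpha^*$ at $e_\alpha$ and recovers biorthogonality on a set of full cardinality by applying Hajnal's free-set theorem to the countable supports; in the general case it runs a Mazur-type transfinite construction of norming functionals for the differences $e_\alpha-e_\beta$ followed by Erd\H{o}s--Rado on \emph{triples}.

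The second and more serious gap is the extraction step. After your colouring you are left, in the worst case, with an uncountable family on which $\|x_\alpha\pm x_\beta\|$ is close to (possibly equal to) $1$ for all pairs, and your plan is to argue that such a family is ``almost isometrically $c_0$-like'' and transplant an explicit $c_0(\omega_1)$ example whose vectors are infinitely supported. Pairwise norm information gives no control over triples, let alone over infinite sums, so this step would fail as stated --- as you yourself suspect. The paper avoids the issue entirely: it uses only three colours, according to whether $\|e_\alpha-e_\beta\|>1$ and $\|e_\alpha+e_\beta\|>1$, or $\|e_\alpha-e_\beta\|>1$ and $\|e_\alpha+e_\beta\|\leq1$, or $\|e_\alpha-e_\beta\|\leq1$, and in the two bad cases it perturbs the vectors using the norm-one biorthogonal functionals rather than trying to identify the span. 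In the second case $\tilde e_\alpha:=e_0+e_\alpha$ are unit vectors (upper bound from the case hypothesis, lower bound from $\p_0$), with unchanged differences and with sums $\geq2$ tested against $\p_0$. In the third case $\tilde e_\alpha:=e_\alpha-\sum_{\gamma<\alpha}c_\alpha^\gamma e_\gamma$ with $c_\alpha^0\geq3/4$ and $\sum_{\gamma<\alpha}c_\alpha^\gamma=1$ are unit vectors (triangle inequality against $\|e_\alpha-e_\gamma\|\leq1$ from above, $\p_\alpha$ from below), with $\|\tilde e_\alpha-\tilde e_\beta\|\geq1+c_\beta^\alpha>1$ tested against $\p_\alpha$ and $\|\tilde e_\alpha+\tilde e_\beta\|\geq3/2$ tested against $\p_0$. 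This uses only the pairwise data you already have plus the Auerbach functionals, and it is the idea your proposal is missing; note also that even your ``one coordinate bounded away from $1$'' case needs the $e_0+e_\alpha$ trick, since a further colouring cannot rule out $\|x_\alpha+x_\beta\|=1$ on every pair of the refined set.
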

The proof of Theorem A will be presented in Section \ref{Section: combinatory}; in particular the first two parts are obtained as Corollary \ref{(1+) in large space} and \ref{(1+) in WLD}, while the last assertion is Theorem \ref{c0omega1}.\smallskip

It was brought to the attention of the second-named author by Marek C\'uth quite rightly that the proof of \cite[Theorem A(iii)]{KaKo} contains a gap, namely that it is not clear why is the system constructed in the proof of \cite[Theorem 3.8]{KaKo} biorthogonal---we shall return to this point soon. However, let us notice that the second clause of Theorem A provides a remedy to this problem, together with an improvement of the result, by exhibiting the sought (1+)-separated subset both in $S_X$, and in $S_{X^*}$.\smallskip

We next turn our attention to some strong structural constrains on the space, which allow construction of potentially larger separated subsets of the unit sphere. For example, we strengthen considerably \cite[Theorem A(i)]{KaKo} by proving the existence of a symmetrically (1+)-separated set in the unit sphere of every (quasi-)reflexive space $X$ that has the maximal possible cardinality, that is, the cardinality equal to ${\rm dens}\, X$, the density of the underlying Banach space. \smallskip

In the case where the number ${\rm dens}\, X$ has uncountable cofinality, such set can be taken to be symmetrically ($1+\varepsilon$)-separated for some $\varepsilon >0$. When $X$ is a super-reflexive space, we improve \cite[Theorem A(ii)]{KaKo} by exhibiting a symmetrically ($1+\varepsilon$)-separated set in the unit sphere of $X$ that also has the maximal possible cardinality---this answers a question raised by the second-named author and T.~Kochanek, \cite[Remark 3.7]{KaKo}. Let us then present our result formally.
\begin{theoremb} Let $X$ be an infinite-dimensional, (quasi-)reflexive Banach space. Then,
\begin{romanenumerate}
\item $S_X$ contains a symmetrically $(1+)$-separated subset with cardinality ${\rm dens}\, X$;
\item for every cardinal number $\kappa\leq{\rm dens}\, X$ with uncountable cofinality there exist $\e>0$ and a symmetrically $(1+\e)$-separated subset of $S_X$ of cardinality $\kappa$;
\item if $X$ is super-reflexive, there exist $\e>0$ and a symmetrically $(1+\e)$-separated subset of $S_X$ of cardinality ${\rm dens}\, X$.
\end{romanenumerate}
\end{theoremb}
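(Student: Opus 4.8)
The plan is to reduce all three parts to the existence of suitable Auerbach systems, exploiting the (quasi-)reflexivity to guarantee that such systems are abundant, and then to pass from an Auerbach system to a symmetrically separated set by a standard averaging/sign trick. Recall that if $\{x_\alpha;\p_\alpha\}_{\alpha\in\kappa}$ is an Auerbach system in $X$, then for $\alpha\neq\beta$ one has $\|x_\alpha-x_\beta\|\geq\p_\alpha(x_\alpha-x_\beta)=1$, and symmetrically $\|x_\alpha+x_\beta\|\geq 1$; thus an Auerbach system of cardinality $\kappa$ is automatically symmetrically $(1)$-separated, and the whole game is to upgrade the $1$ to a strict, or uniform, inequality. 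For part (i) the cleanest route is: first invoke the general existence results on Auerbach systems announced earlier in the paper to produce an Auerbach system $\{x_\alpha;\p_\alpha\}_{\alpha<{\rm dens}\,X}$ in the (quasi-)reflexive space $X$ — here one uses that a (quasi-)reflexive space is WCG, hence such a system of full cardinality exists — and then perturb it. The perturbation that turns a symmetrically $(1)$-separated Auerbach system into a symmetrically $(1+)$-separated set is to replace $x_\alpha$ by a normalised vector of the form $x_\alpha + \eta_\alpha y$, or rather to use the well-known observation that in any infinite-dimensional space one can re-choose the functionals/vectors slightly so that the pairwise symmetric distances become $>1$; concretely, if $\|x_\alpha\pm x_\beta\|=1$ for some pair, strict convexity arguments fail, but one can instead argue as in the separable theory of \cite{HKR-sym-sep}: along a transfinite induction, at stage $\alpha$ one picks $x_\alpha\in S_X$ in the (weak-star closed, infinite-codimensional) annihilator of finitely—or, in the non-separable case, suitably countably—many previously used functionals, so that $\p_\beta(x_\alpha)=0$ and hence $\|x_\alpha\pm x_\beta\|=\|x_\beta\pm x_\alpha\|\geq\p_\beta(\pm x_\alpha + x_\beta)\cdot$, which already gives $\geq 1$; to get strictness one perturbs $x_\alpha$ by a tiny multiple of an auxiliary direction and renormalises, which for each fixed pair strictly increases $\max\{\|x_\alpha+x_\beta\|,\|x_\alpha-x_\beta\|\}$ above $1$, at the cost that the $\e$ is not uniform. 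This is precisely why part (ii) needs the extra cofinality hypothesis.

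For part (ii), the point is that an uncountable-cofinality cardinal $\kappa$ cannot be exhausted by a countable increasing union of smaller pieces, so a pigeonhole argument applies: carry out the construction of (i), recording for each $\alpha$ the value $\e_\alpha>0$ by which its symmetric distances to all earlier vectors exceed $1$; since $\kappa=\bigcup_{n}\{\alpha<\kappa : \e_\alpha>1/n\}$ and ${\rm cf}(\kappa)>\omega$, one of these sets has cardinality $\kappa$, and restricting to it yields a symmetrically $(1+\tfrac1n)$-separated subset of full cardinality $\kappa$. The slight subtlety is to make sure the construction can be performed so that each new vector beats \emph{all} previous ones by a margin bounded below independently of which previous vector we compare with — this follows because at stage $\alpha$ only finitely (or countably) many constraints are active at once for the annihilator condition, and the perturbation can be chosen uniformly against that finite/countable family.

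Part (iii) is the most delicate and I expect it to be the main obstacle; here one cannot merely use a WCG-type Auerbach system of full cardinality, because the margin $\e$ must be uniform across \emph{all} of ${\rm dens}\,X$ simultaneously, and ${\rm dens}\,X$ may have countable cofinality, so the pigeonhole of (ii) is unavailable. The idea is to exploit super-reflexivity quantitatively: by the Enflo–Pisier renorming, $X$ admits an equivalent uniformly convex \emph{and} uniformly smooth norm, and — crucially — a uniformly convex space has \emph{Kottman constant} bounded away from $1$ in a way that does not deteriorate when passing to large index sets, because one can build the separated family inside a subspace spanned by a carefully chosen long basic sequence with good unconditionality/finite-dimensional decomposition constants. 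Concretely, I would find in $X$ a long sequence behaving like an $\ell_p$-type or a suitable uniformly convex FDD of length ${\rm dens}\,X$ (again available since $X$ is WCG, even Hilbert-generated after renorming), and use the modulus of convexity to produce vectors $x_\alpha = \sum_{i\in A_\alpha}\lambda_i e_i$ supported on pairwise almost-disjoint finite sets $A_\alpha$; disjointness of supports together with uniform convexity forces $\|x_\alpha\pm x_\beta\|\geq 1+\e$ with $\e$ depending only on the modulus of convexity of the renorming, hence uniform. Assembling ${\rm dens}\,X$-many such almost-disjoint finite subsets of a set of size ${\rm dens}\,X$ is a routine combinatorial fact (a $\Delta$-system / almost disjoint family argument). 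The hard technical core is arranging the long uniformly convex FDD with a uniform constant — this is where one must cite the structure theory of super-reflexive WCG spaces — and then checking that the averaged vectors are genuinely symmetrically $(1+\e)$-separated, which reduces to a two-dimensional estimate in the uniformly convex renorming.
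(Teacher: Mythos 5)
Your strategy diverges from the paper's, and each part has a gap I do not see how to repair. For (i), you want an Auerbach system of cardinality ${\rm dens}\,X$ and then a perturbation; neither step is available. The existence results for Auerbach systems in this paper either lose cardinality drastically (the Erd\H{o}s--Rado arguments drop from $w^*$-density $>\exp_2\kappa$ down to systems of size $\kappa^+$) or, for WLD spaces, require ${\rm dens}\,X>\omega_1$; indeed Theorem C(ii) shows that a WLD space of density $\omega_1$ may consistently contain \emph{no} uncountable Auerbach system, so ``reflexive, hence WCG, hence an Auerbach system of full cardinality'' is not something you can invoke. Moreover, quasi-reflexive spaces need not be WCG or even WLD; the reduction actually used is Civin--Hood: every quasi-reflexive space contains a reflexive subspace of the same density. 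Even granting the Auerbach system, your perturbation must make \emph{both} $\|x_\alpha+x_\beta\|$ and $\|x_\alpha-x_\beta\|$ strictly exceed $1$ simultaneously against \emph{all} ${\rm dens}\,X$ many partners $\beta$ of a fixed $\alpha$; a tiny perturbation of $x_\alpha$ has no reason to achieve this, and you only claim to raise the maximum of the two norms for each fixed pair. The paper's mechanism for strictness is different and essential: by Lindenstrauss--Troyanski the weakly compact ball of each kernel-intersection $X_\alpha=\bigcap_{\gamma<\alpha}\ker\p_\gamma$ has an \emph{exposed} point $x_\alpha$, exposed by $\p_\alpha$; for $\beta>\alpha$ one has $x_\alpha\pm x_\beta\in X_\alpha$ and $\langle\p_\alpha,x_\alpha\pm x_\beta\rangle=\langle\p_\alpha,x_\alpha\rangle$, which forces $x_\alpha\pm x_\beta\notin B_{X_\alpha}$. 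Your pigeonhole in (ii) is exactly the paper's, but it requires a per-index margin $\e_\alpha>0$ valid against all later vectors, which the paper extracts from \emph{strongly} exposed points; your construction does not supply it.

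For (iii) the renorming route is the real obstruction: the conclusion concerns the original norm of $X$, and the property ``for some $\e>0$, $S_X$ contains a symmetrically $(1+\e)$-separated subset of cardinality $\kappa$'' does not transfer through an isomorphism --- after renormalising the images of the separated vectors, the separation constant degrades by the Banach--Mazur distance and can fall to or below $1$. So exhibiting such a set in an Enflo--Pisier uniformly convex renorming proves nothing about $S_X$ itself. (Also, uniform convexity plus disjoint supports in an FDD does not by itself give $\|x_\alpha\pm x_\beta\|\geq1+\e$; one needs a lower $\ell_q$-estimate.) The paper's argument is built precisely to survive in the original norm: a PRI together with the Gurarii--James inequality yields an operator $T\colon X\to\bigl(\bigoplus(P_{\alpha+1}-P_\alpha)X\bigr)_{\ell_p}$ of norm at most $2+\e$ sending chosen unit vectors to the unit vector basis; after passing to a tail subspace $X_\Lambda$ the ratio of the lower to the upper bound of $T$ can be made arbitrarily close to $1$, and then the disjointly supported images give $\|y_{\beta_1}\pm y_{\beta_2}\|\geq 2^{1/p}-\e$ in the norm of $X$.
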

Quite remarkably, Theorem B involves properties that are preserved by isomorphisms, not only isometries, of Banach spaces.

Let us also note that clause (ii) of Theorem B is optimal as witnessed by the $\ell_2$-sum of the spaces $\ell_{p_n}(\omega_n)$, where $(p_n)_{n=1}^\infty$ is a sequence of numbers greater than 1 and increasing to $\infty$. Such (reflexive) Banach space has density $\omega_\omega$, yet its unit sphere does not have a $(1+\e)$-separated subset of cardinality $\omega_\omega$ for any $\e > 0$ (see \cite[Remark 3.7]{KaKo} for more details).\smallskip

A Banach space $X$ is \emph{quasi-reflexive} whenever the canonical image of $X$ in its bidual $X^{**}$ has finite codimension. The proof of Theorem B will be presented in Section \ref{Section: geometry}; in particular, the first clause is Theorem \ref{(1+) in reflexive}, the second one is contained in Corollary \ref{(1+e) in reflexive} and the last part will be discussed in Section \ref{injections lp(gamma)}. In the same chapter, we shall also present further results whose proofs follow similar patterns; in particular, we shall also prove a~generalisation of the first two parts of the theorem to the class of Banach spaces with the RNP. \smallskip 

Loosely speaking, the very idea behind the proofs of clauses (i) and (ii) of Theorem A is based on employing `very long' Auerbach systems to extract uncountable $(1+)$-separated subsets. Therefore, a substantial part of the present contribution comprises quite general results concerning Auerbach systems in Banach spaces. To wit, we prove in the first part of Section \ref{Section: combinatory} that a sufficiently large Banach space contains a large Auerbach system, which is probably of interest on its own. We next improve and optimise this assertion, in the case of a WLD Banach space. Let us give the statement of our result, prior to any further comment.
\begin{theoremc} Let $X$ be a WLD Banach space.
\begin{romanenumerate}
\item Suppose that ${\rm dens}\, X > \omega_1$. Then, $X$ contains a subspace $Y$ with Auerbach basis and such that ${\rm dens}\,Y={\rm dens}\,X$.
\item \emph{(CH)} There exists a renorming $\nn\cdot$ of the real space $c_0(\omega_1)$ such that the space $(c_0(\omega_1),\nn\cdot)$ contains no uncountable Auerbach systems.
\end{romanenumerate}
\end{theoremc}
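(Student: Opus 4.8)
The plan is to treat the two clauses by rather different techniques. For part (i), the idea is to exploit the structure theory of WLD spaces: a WLD space with $\dens X > \omega_1$ admits a \emph{projectional resolution of the identity} (or, more conveniently, a projectional skeleton), and one can use this to reduce the problem to a transfinite construction. Concretely, I would fix a fundamental biorthogonal system $\{(x_\alpha,x_\alpha^*)\}_{\alpha<\dens X}$ witnessing that $X$ is WLD — recall that for WLD spaces such a system exists with the Markushevich basis being \emph{countably supported}, i.e. $\{\alpha : x_\beta^*(x_\alpha)\neq 0\}$ is countable for each $\beta$. The crucial point is that, in an Auerbach system, the coordinate functionals are \emph{norming} and of norm one, so a naive diagonal/Gram--Schmidt argument destroys the unit-norm condition. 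To get around this, I would first pass to a subset of indices of cardinality $\dens X$ on which the biorthogonal system is `almost Auerbach' using a pigeonhole/$\Delta$-system argument on the (countable) supports, and then perform a careful perturbation: along a suitable well-ordered increasing union of separable subspaces $X_\alpha$ (coming from the skeleton), inductively select unit vectors $y_\alpha$ together with norm-one functionals $y_\alpha^*$ that are biorthogonal, making sure at stage $\alpha$ that $y_\alpha$ is chosen close to the sphere of $X_{\alpha+1}\ominus X_\alpha$ and that $y_\alpha^*$ can be taken to attain its norm at $y_\alpha$ — here I would invoke that on a separable piece one can always find exposed points (this is where strict-convexity-type renorming lemmas, or simply the Bishop--Phelps density of norm-attaining functionals combined with a small perturbation, enter). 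The subspace $Y = \overline{\operatorname{span}}\{y_\alpha\}$ then has an Auerbach basis and density $\dens X$.

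The main obstacle in (i) is precisely maintaining \emph{simultaneous} norm-one-ness of vectors and functionals through the transfinite induction: each new functional $y_\alpha^*$ must be norm-one on all of $Y$, not merely on the piece built so far, and it must annihilate all future $y_\beta$ ($\beta>\alpha$) as well as all past ones. The countable-support property of the WLD biorthogonal system is exactly the tool that makes the `future' requirement manageable — it guarantees that only countably many constraints from earlier stages are active — but getting the norms exactly equal to $1$ (rather than $1+\text{error}$) forces either a limiting argument or an appeal to the fact that a separable space can be renormed to be locally uniformly convex, so that the relevant suprema are attained. I expect this to be the technical heart of the argument.

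For part (ii), the plan is a direct construction under CH. I would build the renorming $\nn\cdot$ of $c_0(\omega_1)$ as the Minkowski functional of a symmetric, bounded, closed convex body $B$ obtained by intersecting the unit ball of $c_0(\omega_1)$ with a family of halfspaces (or slabs) indexed by a carefully chosen almost disjoint / Luzin-type family of countable subsets of $\omega_1$; CH is used to enumerate in order type $\omega_1$ all `potential' uncountable Auerbach systems (more precisely, all relevant countable approximations, using that every element of $c_0(\omega_1)$ and every $w^*$-convergent-enough functional is countably supported) and to diagonalise against them. At stage $\alpha<\omega_1$ one adds finitely (or countably) many new norming functionals to the dual ball of $\nn\cdot$ in such a way that any uncountable candidate system $\{(u_\beta,u_\beta^*)\}$ would, by a $\Delta$-system and pigeonhole argument on supports, contain two indices $\beta,\gamma$ for which the added functional simultaneously witnesses $\nn{u_\beta^*}>1$ or $|u_\gamma^*(u_\beta)|>0$ (or $\nn{u_\beta}>1$), contradicting the Auerbach property. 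One must check that the resulting $B$ is still an equivalent ball — i.e. that the added constraints do not collapse the norm — which follows by arranging that each new functional has norm close to that induced by $c_0(\omega_1)$ on a cofinite set of coordinates, and that $B$ remains symmetric and absorbing.

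The main obstacle in (ii) is ensuring that the diagonalisation genuinely kills \emph{every} uncountable Auerbach system while keeping the norm equivalent to the original $c_0(\omega_1)$-norm: these pull in opposite directions, since killing systems wants many constraints and equivalence wants few/mild ones. The resolution is the standard CH bookkeeping combined with the observation that an uncountable Auerbach system in $c_0(\omega_1)$, after refining to a $\Delta$-system of supports, behaves like an uncountable almost-biorthogonal family `localised' near a fixed finite root, so a single well-placed functional per stage suffices; and equivalence of the norm is preserved because each functional only perturbs behaviour on its (countable) support, leaving a `$c_0$-like' tail untouched. Verifying that no uncountable system survives — rather than merely no system of a fixed restricted form — is the delicate combinatorial point, and I would expect to spend most of the proof of (ii) on exactly this closure argument.
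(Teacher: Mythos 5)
Your proposal for clause (i) identifies the right ingredient (the countable-support property of M-bases in WLD spaces) but misses the single idea that makes the proof short and, more importantly, leaves its own central difficulty unresolved. The paper's argument is: take a normalised M-basis $\{e_\alpha;e_\alpha^*\}_{\alpha<\kappa}$, choose for each $\alpha$ a \emph{support functional} $x_\alpha^*\in S_{X^*}$ with $\langle x_\alpha^*,e_\alpha\rangle=1$ (so both norm conditions hold exactly, with no perturbation), note that each set $N_\alpha={\rm supp}\,x_\alpha^*$ is countable by the WLD property, and apply \emph{Hajnal's free set theorem} to $\alpha\mapsto N_\alpha$ to extract $H$ of full cardinality $\kappa$ on which $\langle x_\alpha^*,e_\beta\rangle=0$ for distinct $\alpha,\beta\in H$. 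This is where the hypothesis ${\rm dens}\,X>\omega_1$ enters (the free set theorem needs the supports to have size $<\lambda$ for some $\lambda<\kappa$), a point your sketch never accounts for. Your substitute tools do not work as stated: the $\Delta$-system lemma for \emph{countable} sets requires additional cardinal arithmetic beyond ZFC in general, and your transfinite Gram--Schmidt with exposed points/Bishop--Phelps perturbations produces norms of the form $1+\varepsilon$ rather than exactly $1$ --- you explicitly flag this as "the technical heart" but do not supply the limiting argument that would close it. As written, (i) is a description of obstacles, not a proof; the free-set trick removes all of them at once because nothing ever needs to be perturbed.

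For clause (ii) the gap is more fundamental. Your plan is to enumerate under CH "all potential uncountable Auerbach systems" (or their "relevant countable approximations") and diagonalise against them, but there are $2^{\omega_1}$ uncountable systems, so no $\omega_1$-length bookkeeping can list them, and you never specify what a "countable approximation" is or why defeating all of those defeats every uncountable system --- this is exactly the closure problem you defer to the end, and it is where the construction would fail. The paper resolves the quantifier over all systems quite differently: CH is used only to enumerate the $\mathfrak c=\omega_1$ nonzero \emph{vectors} of $c_0(\omega_1)$; the norming functionals $\p_\alpha=e_\alpha^*+\sum_k\lambda_\alpha^k e_{\sigma_\alpha(k)}^*$ depend on parameters $\lambda_\alpha$ chosen by transfinite induction so that certain finite determinants $\det(\langle\p_{\beta_i},v_{\alpha_j}\rangle)$ never vanish (using that $\lambda\mapsto\langle\p_\gamma(\lambda),v\rangle$ is a nontrivial real-analytic function, hence has only countably many zeros over the countably many constraints at each stage). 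Separately, one proves that in this norm every norm-attaining functional is \emph{finitely} supported over the basis $(\p_\alpha)$ of the dual, so that an arbitrary uncountable Auerbach system can be refined by the (finite) $\Delta$-system lemma to one whose functional supports are consecutive or share a single point; such a system then yields a homogeneous linear system with an invertible matrix, a contradiction. In short, the universal quantifier over systems is discharged by a \emph{finitary genericity condition} on the norm, not by enumerating systems, and your proposal contains no mechanism playing that role.
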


Clause (ii) of Theorem C is perhaps the most striking single result of the whole paper, and it demonstrates, in particular, that part (i) can not be improved, as the result is consistently false when ${\rm dens}\,X=\omega_1$. The entire Section \ref{No uncountable Auerbach} will be dedicated to the proof of the said result.\smallskip

According to a celebrated result due to Kunen, there exists (under the assumption of the Continuum Hypothesis) a non-separable Banach space with virtually no (uncountable) system of coordinates, in that it admits no uncountable biorthogonal system. In a sense, we may also view Theorem C(ii) as a counterpart to Kunen's result within the class of WLD Banach spaces. Obviously, every non-separable WLD Banach space admits a biorthogonal system with the maximal possible cardinality (any bounded M-basis is a witness of this), so there is no Kunen-type example in the context of WLD spaces.\smallskip

The second clause should also be compared to some results by Godun, Lin, and Troyanski (\cite{GLT Auerbach}, see also \cite[Section 4.6]{HMVZ}), who proved that if $X$ is a non-separable Banach space such that $B_{X^*}$ is $w^*$-separable, then there exists an equivalent norm $\nn\cdot$ on $X$ such that $(X,\nn\cdot)$ admits no Auerbach basis. The result applies, \emph{e.g.}, to $\ell_1([0,1])$, for which space the claim was already proved in \cite{Godun2}; also see \cite{Godun1}. In the same direction, we should also mention that Plichko (\cite{Plichko}) proved, in particular, that $c_0[0,1]+C[0,1]\subseteq\ell_\infty[0,1]$ has no Auerbach basis in its canonical norm.

The examples in \cite{Godun1, Godun2} exhibited examples of Banach spaces with unconditional bases, but no Auerbach basis; this motivated the authors of \cite{GMZ open} to pose the question of whether there exists a Banach space with unconditional basis and whose no non-separable subspace admits an Auerbach basis (\cite[Problem 294]{GMZ open}). Therefore, (ii) of the previous theorem also provides an answer to a stronger version of this question, at least under the assumption of the Continuum Hypothesis.

\section{Notation and preliminaries}
Our notation is standard, as in most textbooks in Functional Analysis. For any unexplained notation or definition, such as super-reflexivity, or the Radon--Nikodym property (RNP, for short), we refer, \emph{e.g.}, to \cite{ak,BeLi,diestel,FHHMZ,LiTzaI,LiTzaII}. Most our results are valid for Banach spaces over either the real or complex field. Any result in which the scalar field is not explicitly mentioned is understood to apply to both cases. For a normed space $X$, we shall denote by $S_X$ the unit sphere of $X$ and by $B_X$ the closed unit ball of $X$.\smallskip

Let us present here the formal definitions of the separation notions relevant to this paper. 
\begin{definition} Let $\delta > 0$ be given. We say that a subset $A$ of a normed space is  
\begin{itemize}
\item \emph{$(\delta+)$-separated} (respectively, \emph{$\delta$-separated}) whenever we have $\|x-y\|>\delta$ (re\-spe\-cti\-vely, $\|x-y\|\geq\delta$) for any distinct elements $x,y\in A$;
\item \emph{symmetrically $(\delta+)$-separated} (respectively, \emph{symmetrically $\delta$-separated}) whenever $\|x\pm y\| > \delta$ (respectively, $\|x\pm y\|\geqslant \delta$) for any distinct elements $x,y\in A$.
\end{itemize}\end{definition}

\subsection{Exposed and strongly exposed points}\label{review exposed} Let $X$ be a Banach space and let $C\subseteq X$ be a non-empty, closed, convex and bounded set. A point $x\in C$ is an \emph{exposed point} for $C$ if there is a functional $\p \in X^*$ such that ${\rm Re}\,\langle\p,y\rangle<{\rm Re}\,\langle\p,x\rangle$ for every $y\in C$, $y\neq x$. In other words, ${\rm Re}\,\p$ attains its supremum over $C$ at the point $x$ and only at that point. In such a case, we also say that the functional $\p$ \emph{exposes} the point $x$. $x\in C$ is a \emph{strongly exposed point} for $C$ if there is a functional $\p \in X^*$ that exposes $x$ and with the property that $y_n\rightarrow x$ for every sequence $(y_n)_{n=1}^\infty$ in $C$ such that $\langle\p,y_n\rangle \rightarrow\langle\p,x\rangle$. In such a case, we say that $\p$ \emph{strongly exposes} the point $x$.\smallskip

Of course, every strongly exposed point is an exposed point and it is immediate to check that every exposed point is an extreme point. By a result of Lindenstrauss and Troyanski (\cite{Lind exposed,Troyanski} see, \emph{e.g.}, \cite[Theorem 8.13]{FHHMZ}), every convex, weakly compact set in a Banach space is the closed convex hull of its strongly exposed points. We shall use the immediate consequence that every non-empty, convex, and weakly compact set in a Banach space admits an exposed point.\smallskip

A functional $\p\in S_{X^*}$ is a \emph{$w^*$-exposed point} of $B_{X^*}$, whenever there exists a unit vector $x\in S_X$ such that $\langle\p,x\rangle=1$ and ${\rm Re}\,\langle\psi,x\rangle<1$ for every $\psi\in B_{X^*}$, $\psi\neq\p$; in other words, $\p$ is the unique supporting functional at $x$. A Banach space $X$ is called a \emph{G\^ateaux differentiability space} if every convex continuous function defined on a~non-empty open convex subset $D$ of $X$ is G\^ateaux differentiable at densely many points of $D$. This notion differs from the notion of \emph{weak Asplund space} only by virtue of the fact that the set of differentiability points is not required to contain a dense $G_\delta$, but merely to be dense in $D$ (for information concerning those spaces, consult \cite{Fabian w-asplund,Phelps diff}). On the other hand, let us stress the fact that the two notions are actually distinct, as it was first proved in \cite{MoorsSomasundaram}. Let us also refer to \cite{Moors} for a simplified proof of the example and to \cite{Kalenda Stegall, Kalenda wAsplund, KenMooSci, MooSom} for related results. 

The interplay between $w^*$-exposed points and G\^ateaux differentiability spaces stems from the fact that points of G\^ateaux differentiability of the norm correspond to $w^*$-exposed points in the dual space. More precisely, the norm $\n$ of a~Banach space $X$ is G\^ateaux differentiable at $x\in S_X$ if and only if there exists a unique $\p\in B_{X^*}$ with $\langle\p,x\rangle=1$ (see, \emph{e.g.}, \cite[Corollary 7.22]{FHHMZ}); in which case, $\p$ is $w^*$-exposed by $x$. \smallskip

\subsection{Asymptotically uniformly convex spaces}\label{review AUC} \begin{definition} Let $X$ be an infinite-dimensional Banach space. The \emph{modulus of asymptotic uniform convexity} $\auc_X$ is given, for $t\geq0$, by
$$\auc_X(t):=\inf_{\|x\|=1}\sup_{\dim(X/H)<\infty}
\inf_{\overset{h\in H}{\|h\|\geq t}}(\|x+h\|-1).$$
We shall also denote by $\auc_X(\cdot,x)$ the \emph{modulus of asymptotic uniform convexity at $x$}:
$$\auc_X(t,x):=\sup_{\dim(X/H)<\infty}\inf_{\overset{h\in H}{\|h\|\geq t}}(\|x+h\|-1).$$
An infinite-dimensional Banach space $X$ is \emph{asymptotically uniformly convex} if $\auc_X(t)>0$ for every $t>0$.
\end{definition}

Let us first note that $\auc_X(t,x)\geq0$ for every $t\geq0$ and $x\in S_X$. In fact, we can find a norming functional $x^*$ for $x$ and consider $H=\ker x^*$; of course, $\|x+h\|\geq1$ for each $h\in H$. Hence
$$\auc_X(t,x)\geq\inf_{\overset{h\in \ker x^*}{\|h\|\geq t}}(\|x+h\|-1)\geq0.$$
Choosing, for each $H$ with $\dim(X/H)<\infty$, $h=0\in H$, shows that $\auc_X(0,x)=0$, so $\auc_X(0)=0$ too. It is also obvious that $\auc_X$ is a non-decreasing function, and that $\auc_X(\cdot,x)$ is non-decreasing for each fixed $x$; in fact, $\inf_{h\in H,\|h\|\geq t}(\|x+h\|-1)$ clearly increases with $t$.

One more property which is easily verified (\emph{cf.} \cite[Proposition 2.3.(3)]{JLPS diff Lip}) is the fact that, for each $t\in[0,1]$, $\delta_X(t)\leq\auc_X(t)$, where $\delta_X$ denotes the modulus of uniform convexity. The non-unexpected fact that uniformly convex Banach spaces are asymptotically uniformly convex immediately follows. The two notions are however non equivalent, as it is easy to see that $\ell_1$ is asymptotically uniformly convex. For more information on asymptotically uniformly convex Banach spaces and the modulus $\auc_X$, consult, \emph{e.g.}, \cite{GKL, JLPS diff Lip, KOS asymptotic, milman}.\smallskip

In the proof of one our result we shall need one more property of this modulus, namely that passage to a subspace improves the modulus $\auc$ (\emph{cf.} \cite[Proposition 2.3.(2)]{JLPS diff Lip}). We also present its very simple proof, for the sake of completeness.
\begin{fact}\label{Fact: AUC modulus of subspace} Let $Y$ be a closed infinite-dimensional subspace of a Banach space $X$. Then $\auc_Y\geq\auc_X$.
\end{fact}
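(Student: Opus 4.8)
The plan is to compare the two moduli pointwise --- first at a fixed unit vector, then by taking infima. Fix $t\geq0$ and a vector $x\in S_Y$; since $Y$ is a subspace of $X$ we have $x\in S_X$ as well. I claim that $\auc_Y(t,x)\geq\auc_X(t,x)$, and the Fact will follow immediately by taking the infimum over $x\in S_Y$ and using the inclusion $S_Y\subseteq S_X$.

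To prove the claim, the one observation that does the work is that intersecting a finite-codimensional subspace of $X$ with $Y$ produces a finite-codimensional subspace of $Y$. Indeed, if $H\leq X$ satisfies $\dim(X/H)<\infty$, put $G:=H\cap Y$; the natural injection $Y/G\hookrightarrow X/H$ shows $\dim(Y/G)\leq\dim(X/H)<\infty$, so (as $Y$ is infinite-dimensional) $G$ is one of the subspaces over which the supremum defining $\auc_Y(t,x)$ is taken. Since $G\subseteq H$, the infimum on the left below --- being an infimum over the smaller set $\{h\in G:\|h\|\geq t\}$ --- is no smaller than the one on the right:
$$\inf_{\overset{h\in G}{\|h\|\geq t}}(\|x+h\|-1)\;\geq\;\inf_{\overset{h\in H}{\|h\|\geq t}}(\|x+h\|-1).$$
Taking the supremum over all finite-codimensional $H\leq X$ on the right-hand side, and noting that every $G=H\cap Y$ occurring on the left is admissible in the definition of $\auc_Y(t,x)$, yields $\auc_Y(t,x)\geq\auc_X(t,x)$.

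It remains to pass to the infimum over unit vectors. Using $S_Y\subseteq S_X$ and the pointwise inequality just established,
$$\auc_Y(t)=\inf_{x\in S_Y}\auc_Y(t,x)\;\geq\;\inf_{x\in S_Y}\auc_X(t,x)\;\geq\;\inf_{x\in S_X}\auc_X(t,x)=\auc_X(t)$$
for every $t\geq0$, which is precisely the assertion $\auc_Y\geq\auc_X$. There is nothing genuinely subtle here; the only points demanding a moment's attention are the directions of the inequalities when moving between suprema and infima over $X$ and over $Y$, and the elementary fact that finite codimension is preserved under intersection with $Y$.
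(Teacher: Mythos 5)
Your proof is correct and follows essentially the same route as the paper's: the injection $Y/(Y\cap H)\hookrightarrow X/H$ to see that $H\cap Y$ is admissible in the supremum defining $\auc_Y(t,x)$, the monotonicity of the infimum under shrinking the set, and a final passage to the infimum over unit vectors of $Y$. Nothing to add.
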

\begin{proof} Fix any $t\geq0$ and $y\in Y$. If $H\subseteq X$ is such that $\dim(X/H)<\infty$, then also $\dim(Y/(Y\cap H))<\infty$ (the inclusion $Y\hookrightarrow X$ induces an injection $Y/(Y\cap H)\hookrightarrow X/H$); consequently, 
$$\inf_{\overset{h\in H}{\|h\|\geq t}}(\|y+h\|-1)\leq \inf_{\overset{h\in H\cap Y}{\|h\|\geq t}}(\|y+h\|-1) \leq\overline{\delta}_Y(t,y).$$
Passage to the supremum over $H$ gives $\auc_X(t)\leq\auc_X(t,y)\leq\auc_Y(t,y)$. We now pass to the infimum over $y\in Y$ and conclude the proof.
\end{proof}

\subsection{WLD spaces}\label{Review WLD} A topological space $K$ is a \emph{Corson compact} whenever it is homeomorphic to a compact subset $C$ of the product space $[-1,1]^\Gamma$ for some set $\Gamma$, such that every element of $C$ has only countably many non-zero coordinates. A Banach space $X$ is \emph{weakly Lindel\"of determined} (hereinafter, \emph{WLD}) if the dual ball $B_{X^*}$ is a Corson compact in the relative $w^*$-topology.\smallskip

Suppose now that $\{x_\gamma;x_\gamma^*\}_{\gamma\in\Gamma}\subseteq X\times X^*$ is an \emph{M-basis} for $X$, namely 
\begin{romanenumerate}
\item $\overline{\rm span}\{x_\gamma\}_{\gamma\in\Gamma}=X$;
\item $\overline{\rm span}^{w^*}\{x_\gamma^*\}_{\gamma\in\Gamma}=X^*$;
\item $\langle x_\alpha^*, x_\beta\rangle =\delta_{\alpha,\beta}$.
\end{romanenumerate}
Given an M-basis $\{x_\gamma;x_\gamma^*\}_{\gamma\in\Gamma}$, a functional $x^*\in X^*$ is \emph{countably supported} by $\{x_\gamma;x_\gamma^*\}_{\gamma \in\Gamma}$, or $\{x_\gamma;x_\gamma^*\}_{\gamma \in\Gamma}$ \emph{countably supports} $x^*$, if the support of $x^*$, \emph{i.e.},
$${\rm supp}\, x^*:=\{\gamma\in\Gamma\colon \langle x^*, x_\gamma\rangle \neq0\}$$
is a countable subset of $\Gamma$. 

We also say that an M-basis $\{x_\gamma;x_\gamma^*\}_{\gamma\in\Gamma}$ is \emph{countably $1$-norming} if the set of countably supported functionals is $1$-norming for $X$, \emph{i.e.}, every $x\in X$ satisfies
$$\|x\|=\sup\{|\langle x^*, x\rangle|\colon x^*\in B_{X^*}\text{ is countably supported}\}.$$

We may now give the following M-bases characterisation of WLD Banach spaces (\cite{Kalenda,Kalenda survey,VWZ}, \emph{cf.} \cite[Theorems 5.37 and 5.51]{HMVZ}).
\begin{theorem}
Let $X$ be a Banach space. Then the following assertions are equivalent:
\begin{romanenumerate}
\item $X$ is WLD;
\item $X$ admits an M-basis $\{x_\gamma;x_\gamma^*\}_{\gamma\in\Gamma}$ that countably supports $X^*$, i.e., every $x^*\in X^*$ is countably supported by $\{x_\gamma; x_\gamma^*\}_{\gamma\in\Gamma};$
\item $X$ admits, under any equivalent norm, a countably $1$-norming M-basis. 
\end{romanenumerate}
In this case, every M-basis countably supports $X^*$.
\end{theorem}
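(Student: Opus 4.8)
The plan is to establish the two equivalences (i)$\Leftrightarrow$(ii) and (i)$\Leftrightarrow$(iii) together with the closing assertion. Of these, the implication (i)$\Rightarrow$(ii) is the only genuinely hard step: it amounts to manufacturing an M-basis of the required kind out of the bare fact that $(B_{X^*},w^*)$ is Corson, and this is where the projectional machinery of WLD spaces enters. All the remaining directions are comparatively soft, and (iii)$\Rightarrow$(i) follows a pattern entirely parallel to (i)$\Rightarrow$(ii).

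For (ii)$\Rightarrow$(i), fix an M-basis $\{x_\gamma;x_\gamma^*\}_{\gamma\in\Gamma}$ that countably supports $X^*$ and consider the evaluation map
$$\Phi\colon (B_{X^*},w^*)\longrightarrow \textstyle\prod_{\gamma\in\Gamma}[-\|x_\gamma\|,\|x_\gamma\|],\qquad \Phi(x^*)=(\langle x^*,x_\gamma\rangle)_{\gamma\in\Gamma}.$$
This map is continuous for the product topology on the target and injective, because $\overline{\rm span}\{x_\gamma\}_{\gamma\in\Gamma}=X$ forces a functional annihilating every $x_\gamma$ to vanish; a continuous injection of a compact space into a Hausdorff space is a homeomorphic embedding. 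By hypothesis each $\Phi(x^*)$ has countably many non-zero coordinates, so, after rescaling the $\gamma$-th coordinate by $\|x_\gamma\|^{-1}$ whenever $x_\gamma\neq0$, $\Phi$ realises $(B_{X^*},w^*)$ as a compact subset of a $\Sigma$-product; hence $B_{X^*}$ is Corson compact and $X$ is WLD.

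For (i)$\Rightarrow$(ii) I would argue by transfinite induction on $\mu={\rm dens}\,X$, the separable case being vacuous. In the inductive step one uses that a WLD space admits a projectional generator, and hence a projectional resolution of the identity $(P_\alpha)_{\omega\leq\alpha\leq\mu}$ whose consecutive differences $Q_\alpha:=P_{\alpha+1}-P_\alpha$ have the properties that each $Q_\alpha X$ is again WLD with ${\rm dens}\,Q_\alpha X<\mu$, that every $x^*\in X^*$ is the $w^*$-convergent sum $\sum_{\alpha<\mu}Q_\alpha^*x^*$, and that $\{\alpha\colon Q_\alpha^*x^*\neq0\}$ is countable for each $x^*$. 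Applying the inductive hypothesis to each block $Q_\alpha X$, composing the resulting M-bases with $Q_\alpha$, and taking the union over $\alpha$ yields a family $\{x_\gamma;x_\gamma^*\}_{\gamma\in\Gamma}$ for $X$; the three M-basis axioms follow from $X=\overline{\rm span}\bigcup_\alpha Q_\alpha X$ together with the two reconstruction properties just listed, and one checks countable support of an arbitrary $x^*\in X^*$ by combining the countability of $\{\alpha\colon Q_\alpha^*x^*\neq0\}$ with the inductive hypothesis inside each $Q_\alpha X$. The implication (i)$\Rightarrow$(iii) is then a by-product: being WLD is an isomorphic invariant (an isomorphism induces a $w^*$-homeomorphism of the dual balls, and Corson compactness passes to closed subsets), so for every equivalent norm $\nn\cdot$ the space $(X,\nn\cdot)$ is again WLD and hence, by (i)$\Rightarrow$(ii), admits an M-basis countably supporting its dual---and such an M-basis is automatically countably $1$-norming, since then the whole of $B_{X^*}$ consists of countably supported functionals.

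Finally, (iii)$\Rightarrow$(i) reverses the construction of the previous paragraph: from a countably $1$-norming M-basis one checks that the countably supported functionals, equipped with a suitable assignment of countable subsets of $X$, form a projectional generator on $X^*$, which forces $(B_{X^*},w^*)$ to be Corson. And once $X$ is known to be WLD, \emph{every} M-basis $\{y_\gamma;y_\gamma^*\}$ countably supports $X^*$: iterating the projectional resolution of the identity one obtains, compatibly with the family $\{y_\gamma\}$, a decomposition of $X$ into separable pieces such that any fixed $x^*$ is non-zero on only countably many of them, whence $\{\gamma\colon\langle x^*,y_\gamma\rangle\neq0\}$ is covered by countably many separable pieces and is therefore countable. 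The main obstacle is, throughout, the extraction of a projectional resolution of the identity (equivalently, a projectional skeleton) with these reconstruction and countable-support properties from the mere Corson compactness of $B_{X^*}$; given this structural input, the inductive bookkeeping and the verification of the M-basis axioms are routine.
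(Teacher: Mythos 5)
This statement is part of the paper's preliminaries (Section 2.3) and is stated there \emph{without proof}: the authors simply cite Kalenda, Vanderwerff--Whitfield--Zizler, and \cite[Theorems 5.37 and 5.51]{HMVZ} for it, so there is no in-paper argument to compare yours against. Measured against the standard proofs in those references, your outline is essentially the canonical one. The direction (ii)$\Rightarrow$(i) via the evaluation embedding of $(B_{X^*},w^*)$ into a $\Sigma$-product is complete and correct as written. The directions (i)$\Rightarrow$(ii), (iii)$\Rightarrow$(i) and the closing assertion are correctly organised around projectional generators and projectional resolutions of the identity, which is exactly how the cited sources proceed; but you should be aware that you have black-boxed the entire mathematical content of the theorem into the sentence about ``extracting a PRI with these reconstruction and countable-support properties.'' In particular, the countability of $\{\alpha\colon Q_\alpha^*x^*\neq 0\}$ is \emph{not} a property of an arbitrary PRI (for a general PRI the map $\alpha\mapsto P_\alpha^*x^*$ is only $w^*$-continuous, not norm-continuous), and obtaining it requires the PRI to be built from a projectional generator whose domain is all of $X^*$ --- which is available precisely because $B_{X^*}$ is Corson. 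You flag this honestly, so the proposal is a correct sketch rather than a gap, but a full write-up would have to supply that construction (it occupies most of a chapter in \cite{HMVZ}). One further small point: in the final assertion, the claim that a fixed $x^*$ is non-zero on only countably many separable pieces of the decomposition again needs the WLD-specific countable-support property of the PRI on the dual side, not just the existence of a PRI; with that input your covering argument for $\{\gamma\colon\langle x^*,y_\gamma\rangle\neq0\}$ goes through.
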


For a more detailed presentation of these notions, the reader may wish to consult \cite{HMVZ,KKLP,Kalenda survey} and the references therein.

\subsection{Infinitary combinatorics}\label{Review combinatory} In subsequent sections we shall need to exploit some results concerning infinitary combinatorics, whose statements are recalled here for convenience of the reader. We also shortly fix some notation concerning cardinal and ordinal numbers.\smallskip

We use von Neumann's definition of ordinal numbers and we regard cardinal numbers as initial ordinal numbers. In particular, we write $\omega$ for $\aleph_0$, $\omega_1$ for $\aleph_1$, \emph{etc}., as we often view cardinal numbers as well-ordered sets; we also denote by $\mathfrak{c}$ the cardinality of continuum. For a cardinal number $\kappa$, we write $\kappa^+$ for the immediate successor of $\kappa$, that is, the smallest cardinal number that is strictly greater than $\kappa$. \smallskip

If $F$ and $G$ are subsets of a certain ordinal number $\lambda$, we shall use the (perhaps self-explanatory) notation $F<G$ to mean that $\sup F<\min G$; in the case that $G=\{g\}$ is a~singleton, we shall write $F<g$ instead of $F<\{g\}$.\smallskip

We next recall a few results to be used later. 
\begin{lemma}[$\Delta$-system lemma] Consider a family $\mathcal{F}=\{F_\gamma\} _{\gamma\in\Gamma}$ of finite subsets of a set $S$, where $|\Gamma|$ is an uncountable regular cardinal number. Then there exist a subset $\Gamma_0$ of $\Gamma$ with $|\Gamma_0|=|\Gamma|$ and a finite subset $\Delta$ of $S$ such that 
$$F_{\gamma}\cap F_{\gamma'}=\Delta,$$
whenever $\gamma,\gamma'\in\Gamma_0$, $\gamma\neq\gamma'$.
\end{lemma}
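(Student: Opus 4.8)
The plan is to carry out the classical counting/pressing-down argument. Write $\kappa=|\Gamma|$, an uncountable regular cardinal, and set $\mathcal F=\{F_\gamma\}_{\gamma\in\Gamma}$. First I would observe that, since each $F_\gamma$ is finite and $\kappa$ is uncountable and regular, there is a single natural number $n$ such that $|F_\gamma|=n$ for $\kappa$-many indices $\gamma$: indeed $\Gamma=\bigcup_{k\in\omega}\{\gamma\colon |F_\gamma|=k\}$ is a countable union, so by regularity (and $\kappa>\omega$) one of the pieces has size $\kappa$. Passing to this subfamily, I may and do assume $|F_\gamma|=n$ for \emph{all} $\gamma\in\Gamma$, and I argue by induction on $n$. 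The case $n=0$ is trivial with $\Delta=\varnothing$. For the case $n=1$ the sets are singletons $F_\gamma=\{s_\gamma\}$; if $\kappa$-many of them coincide we take $\Delta$ to be that common singleton, and otherwise we may thin out to a subfamily of pairwise distinct singletons, which is a $\Delta$-system with root $\Delta=\varnothing$.

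For the inductive step, assume the result for families of $(n-1)$-element sets (for every uncountable regular cardinal, since we may need to apply it to a subfamily of the same size $\kappa$). Fix an arbitrary $\gamma_0\in\Gamma$. There are two cases. \emph{Case 1:} some element $s\in S$ lies in $F_\gamma$ for $\kappa$-many $\gamma$. Then restrict to $\Gamma_1=\{\gamma\colon s\in F_\gamma\}$, which has size $\kappa$, and consider the sets $F_\gamma\setminus\{s\}$ for $\gamma\in\Gamma_1$; these are $(n-1)$-element subsets of $S$, so by the inductive hypothesis there is $\Gamma_0\subseteq\Gamma_1$ with $|\Gamma_0|=\kappa$ and a finite $\Delta'\subseteq S$ with $(F_\gamma\setminus\{s\})\cap(F_{\gamma'}\setminus\{s\})=\Delta'$ for distinct $\gamma,\gamma'\in\Gamma_0$; then $\Delta:=\Delta'\cup\{s\}$ is the desired root on $\Gamma_0$. \emph{Case 2:} every $s\in S$ lies in only $<\kappa$-many of the $F_\gamma$. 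Then I build, by transfinite recursion of length $\kappa$, a pairwise disjoint subfamily: having chosen $\gamma_\xi$ for $\xi<\eta$ (with $\eta<\kappa$), the set $\bigcup_{\xi<\eta}F_{\gamma_\xi}$ has size $<\kappa$ since $|\eta|<\kappa=\mathrm{cf}(\kappa)$ and each $F_{\gamma_\xi}$ is finite, hence by the case hypothesis the set of $\gamma$ meeting $\bigcup_{\xi<\eta}F_{\gamma_\xi}$ has size $<\kappa$, so some $\gamma_\eta$ avoids it; the resulting $\{F_{\gamma_\eta}\}_{\eta<\kappa}$ is a $\Delta$-system with empty root, and $|\Gamma_0|=\kappa$ as required.

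The only point that needs genuine care — and the place I expect the "main obstacle", such as it is — is the repeated use of regularity of $\kappa$ to guarantee that "$<\kappa$-many sets, each finite" has union of size $<\kappa$, and that a $\kappa$-indexed family cannot be exhausted by fewer than $\kappa$ small pieces; this is exactly where the hypothesis that $|\Gamma|$ is an \emph{uncountable regular} cardinal is essential (for singular $\kappa$ the statement genuinely fails). One should also make sure the inductive hypothesis is stated uniformly over all uncountable regular cardinals, since in Case 1 it is invoked for a subfamily whose index set still has size $\kappa$; with that phrasing the induction goes through cleanly and the proof is complete.
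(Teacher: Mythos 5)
Your argument is correct and complete. Note that the paper does not prove this lemma at all: it is quoted as a known result, with a reference to Kunen's book (Lemma III.2.6 there) for the proof and for the comparison between the indexed-family formulation used here and the more common one, so there is no in-paper proof to compare against. What you give is the standard self-contained argument for the finite-set case: stabilise the cardinality $|F_\gamma|=n$ on a subfamily of size $\kappa$ using regularity, then induct on $n$, splitting according to whether some point belongs to $\kappa$-many of the sets (shrink the root into the induction) or not (extract a pairwise disjoint subfamily by transfinite recursion). All the cardinality estimates you invoke — a countable union of sets of size $<\kappa$ has size $<\kappa$, a union of $<\kappa$ sets of size $<\kappa$ has size $<\kappa$ — are exactly where uncountability and regularity of $\kappa$ enter, and you use them correctly; the inductive hypothesis is also correctly quantified over all uncountable regular cardinals. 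The only quibble is with your parenthetical claim that the statement ``genuinely fails'' for singular $\kappa$: it does fail when ${\rm cf}(\kappa)=\omega$ (nested blocks of sets of growing size give a counterexample at $\aleph_\omega$), but for singular cardinals of uncountable cofinality the finite-set version still holds; since this remark is not used in the proof, it does not affect correctness.
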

This result is more frequently stated in a slightly different way, \emph{i.e.}, involving a set $\mathcal{F}$, whose cardinality is a regular cardinal number; we prefer this (equivalent) formulation, since it will apply more directly to our considerations. The difference relies in the fact that, for $\gamma\neq\gamma'$, the sets $F_{\gamma}$ and $F_{\gamma'}$ are not necessarily distinct, so the cardinality of the set $\mathcal{F}$ may be smaller than $|\Gamma|$. We refer, \emph{e.g.}, to \cite[Lemma III.2.6]{kunen} for the more usual statement of the result and to the remarks following the proof of Theorem III.2.8 for a~comparison between the two formulations.\smallskip

The second result that we shall use concerns partition properties for cardinal numbers and it is the counterpart for larger cardinals to the classical Ramsey's theorem. For the proof of the result to be presented and for a more complete discussion over partition properties, we refer, \emph{e.g.}, to \cite[Theorem 5.67]{HMVZ}, \cite[Section 9.1]{Jech}, \cite[pp. 237--238]{kunen}, or the monograph \cite{EHMR}. Before we state the result, we require a piece of notation.\smallskip

For a cardinal number $\kappa$, one defines the iterated powers by $\exp_1\kappa:=2^\kappa$ and then recursively $\exp_{n+1}\kappa:=\exp(\exp_n\kappa)$ ($n\in \mathbb N$). If $S$ is a set and $\kappa$ is a cardinal number, then we denote by $[S]^\kappa$ the set of all subsets of $S$ of cardinality $\kappa$, \emph{i.e.},
$$[S]^\kappa:=\{A\subseteq S\colon |A|=\kappa\}.$$

We also need to recall the arrow notation: assume that $\kappa$, $\lambda$, and $\sigma$ are cardinal numbers and $n$ is a natural number. Then the symbol
$$\kappa\to(\lambda)^n_\sigma$$
abbreviates the following partition property: for every function $f\colon[\kappa]^n\to\sigma$ there exists a set $Z\subseteq\kappa$ with $|Z|=\lambda$ such that $f$ is constant on $[Z]^n$; in this case, we say that $Z$ is \emph{homogeneous} for $f$. With a more suggestive notation, the function $f$ is sometimes called a \emph{$\sigma$-colouring} of $[\kappa]^n$ and, accordingly, the set $Z$ is also said to be \emph{monochromatic}.
\begin{theorem}[Erd\H{o}s--Rado theorem] For every infinite cardinal $\kappa$ and every $n\in\N$
$$(\exp_n\kappa)^+\to(\kappa^+)^{n+1}_\kappa.$$
\end{theorem}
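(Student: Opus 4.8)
The plan is to prove the statement by induction on $n$, after adopting the convention $\exp_0\kappa:=\kappa$; then the case $n=0$ reads $\kappa^+\to(\kappa^+)^1_\kappa$, which is merely the pigeonhole principle (a regular cardinal $\kappa^+$ is not a union of $\kappa$ sets of size $\le\kappa$). For the inductive step, assume the assertion for $n$ and set $\mu:=\exp_n\kappa$, so that $\exp_{n+1}\kappa=2^\mu$; fixing $\lambda:=(2^\mu)^+$ and a colouring $F\colon[\lambda]^{n+2}\to\kappa$, the goal is a homogeneous set of size $\kappa^+$. The entire content is concentrated in producing an \emph{end-homogeneous} set, that is, a set $H\subseteq\lambda$ with $|H|=\mu^+=(\exp_n\kappa)^+$ together with $g\colon[H]^{n+1}\to\kappa$ such that $F(s\cup\{b\})=g(s)$ whenever $s\in[H]^{n+1}$, $b\in H$ and $\max s<b$. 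Granting this, the induction closes at once: by the inductive hypothesis applied to the $(n+1)$-ary colouring $g$ on a set of size $(\exp_n\kappa)^+$, there is $H'\subseteq H$ with $|H'|=\kappa^+$ on which $g$ is constant, and then $F(t)=g\bigl(t\setminus\{\max t\}\bigr)$ shows that $H'$ is homogeneous for $F$.

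To construct $H$ I would work with an elementary submodel. Pick a regular cardinal $\theta$ with $\lambda,F\in H(\theta)$ and choose $M\prec H(\theta)$ (equipped with a fixed well-ordering) with $F,\lambda,\kappa,\mu\in M$, $\mu\subseteq M$, $|M|=2^\mu$, $\,{}^{\mu}M\subseteq M$ (possible since $(2^\mu)^\mu=2^\mu$), and $\operatorname{cf}\bigl(\sup(M\cap\lambda)\bigr)>\mu$; the last two requirements are met by taking $M=\bigcup_{i<\mu^+}M_i$ for a continuous increasing chain of elementary submodels of size $2^\mu$ with $M_i\in M_{i+1}$ and $\,{}^{\mu}M_i\subseteq M_{i+1}$. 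Put $\delta:=\sup(M\cap\lambda)$; then $\delta<\lambda$ because $|M\cap\lambda|\le 2^\mu<\operatorname{cf}\lambda$, and $M\cap\lambda$ has no largest element, so $\delta>\gamma$ for every $\gamma\in M\cap\lambda$.

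The decisive device is to \emph{freeze the limiting colour in advance}: set $g^{*}(s):=F(s\cup\{\delta\})$ for $s\in[M\cap\lambda]^{n+1}$, and then build, by recursion on $\xi<\mu^+$, ordinals $a_\xi\in M\cap\lambda$ with $a_\xi>\sup_{\eta<\xi}a_\eta$ and $F(s\cup\{a_\xi\})=g^{*}(s)$ for every $s\in[\{a_\eta:\eta<\xi\}]^{n+1}$. The key observation is that the set
$$\widehat Z_\xi:=\Bigl\{b<\lambda\colon b>\sup_{\eta<\xi}a_\eta\ \text{and}\ F(s\cup\{b\})=g^{*}(s)\ \text{for all}\ s\in[\{a_\eta:\eta<\xi\}]^{n+1}\Bigr\}$$
is definable in $M$: indeed $\langle a_\eta:\eta<\xi\rangle$ and $\sup_{\eta<\xi}a_\eta$ lie in $M$ by $\mu$-closure, and the restriction $g^{*}{\upharpoonright}\,[\{a_\eta:\eta<\xi\}]^{n+1}$ lies in $M$ because it is the value at $\delta$ of the $M$-definable map $b\mapsto\langle F(s\cup\{b\}):s\in[\{a_\eta:\eta<\xi\}]^{n+1}\rangle$, whose range has size $\le\kappa^{\mu}=2^\mu$ and consists of $\mu$-sequences over $M$, hence is contained in $M$. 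On the other hand $\delta\in\widehat Z_\xi$, since $\operatorname{cf}\delta>\mu\ge|\xi|$ forces $\sup_{\eta<\xi}a_\eta<\delta$. Therefore, for each $\gamma\in M\cap\lambda$ the true statement ``$\exists b\in\widehat Z_\xi\,(b>\gamma)$'' reflects down to $M$, so $\widehat Z_\xi\cap M$ is cofinal in $\delta$, and a suitable $a_\xi$ above $\sup_{\eta<\xi}a_\eta$ can be chosen. The recursion thus runs through all $\mu^+$ stages, and $H:=\{a_\xi:\xi<\mu^+\}$ with $g:=g^{*}{\upharpoonright}\,[H]^{n+1}$ is the desired end-homogeneous set.

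The place I expect to be the real obstacle — and the reason for this somewhat elaborate set-up — is exactly the limit stages of the recursion. The naive strategy of maintaining a ``reservoir'' subset of $\lambda$ of size $(2^\mu)^+$ and thinning it one colour-pattern at a time breaks down at limit ordinals, because an intersection of $\mu$ many cofinal subsets of $\lambda$ need not be cofinal (one can partition a tail of $\lambda$ into $\mu$ cofinal pieces). Pinning the target colouring $g^{*}$ to $\delta=\sup(M\cap\lambda)$ circumvents precisely this: $\delta$ itself is a legitimate witness for $\widehat Z_\xi$ at \emph{every} stage, so $\widehat Z_\xi$ never becomes bounded in $M\cap\lambda$, and the construction survives all limits without any size bookkeeping.
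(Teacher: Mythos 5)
Your proof is correct. Note first that the paper does not prove this statement at all: the Erd\H{o}s--Rado theorem is quoted as a known result, with the proof deferred to the cited references (HMVZ, Jech, Kunen, and the monograph of Erd\H{o}s--Hajnal--M\'at\'e--Rado), so there is no in-paper argument to compare against. On its own merits, your induction on $n$ via an end-homogeneous set is the standard reduction, and the elementary-submodel construction of that set is sound: the closure ${}^{\mu}M\subseteq M$ guarantees that at each stage $\xi<\mu^{+}$ the set $\{a_\eta:\eta<\xi\}$, its supremum, and the restriction $g^{*}{\upharpoonright}[\{a_\eta:\eta<\xi\}]^{n+1}$ (being a $\le\mu$-sized function into $\kappa\subseteq M$, realised as a value of an $M$-definable map) all lie in $M$, so $\widehat Z_\xi\in M$; since $\delta$ witnesses that $\widehat Z_\xi$ is unbounded in $M\cap\lambda$, elementarity produces $a_\xi$, and the recursion indeed survives all $\mu^{+}$ stages. (Two cosmetic points: for $\xi\geq\mu$ it is the \emph{set} $\{a_\eta:\eta<\xi\}$ rather than the indexed sequence that one should place in $M$, which is what your argument actually uses; and the hypothesis $\operatorname{cf}(\sup(M\cap\lambda))>\mu$ already follows from ${}^{\mu}M\subseteq M$.) This route, standard in modern treatments, differs from the ramification/tree argument found in the classical references the paper cites, but both yield the same statement; yours arguably isolates more transparently why the limit stages cause no loss.
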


The last result of combinatorial nature recorded in this section is Hajnal's theorem on free sets. Given a set $S$, by a \emph{set function on} $S$ we understand a function $f\colon S\to 2^S$. A subset $H$ of $S$ is a \emph{free set for} $f$ if $f(x)\cap H\subseteq\{x\}$ whenever $x\in H$. One may equivalently require, and such approach is frequently followed, that the function $f$ satisfies $x\notin f(x)$ ($x\in S$), in which case $H$ is a free set for $f$ if it is disjoint from $f(x)$, for every $x\in H$. \smallskip

The natural question which arises is to find sufficient conditions on $f\colon\kappa\to 2^\kappa$, where $\kappa$ is a cardinal number, for the existence of as large as possible free sets. It is clear that the mere assumption the cardinality of $f(x)$ to be less than $\kappa$, for $x\in\kappa$, does not even ensure the existence of a two element free set. This is simply witnessed by the function $f(\lambda):=\lambda\;(=\{\alpha\colon\alpha<\lambda\})$ ($\lambda<	\kappa$). On the other hand, the existence of $\lambda<\kappa$ such that $|f(x)|<\lambda$ for $x\in\kappa$ turns out to be sufficient for the existence of free sets of the maximal possible cardinality. This result was first conjectured by Ruziewicz \cite{Ruz36} and finally proved by Hajnal \cite{Haj61}. \smallskip

Let us now formally state Hajnal's theorem; we shall refer, \emph{e.g.}, to \cite[\S 44]{EHMR} or \cite[\S 3.1]{Wil77} for the proof of the result and for further information on the subject.
\begin{theorem}[Hajnal's theorem]\label{Hajnal th} Let $\lambda$ and $\kappa$ be cardinal numbers with $\lambda<\kappa$ and $\kappa$ infinite. Then for every function $f\colon\kappa\to [\kappa]^{<\lambda}$ there exists a set of cardinality $\kappa$ that is free for $f$.
\end{theorem}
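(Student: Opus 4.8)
The plan is to build a free set of size $\kappa$ by a transfinite construction along a chain of elementary submodels and then to thin it out by a colouring, treating the regular and the singular case separately. First normalise: since replacing $f(x)$ by $f(x)\setminus\{x\}$ affects no free set, assume $x\notin f(x)$ for all $x<\kappa$; then $H\subseteq\kappa$ is free for $f$ exactly when $f(x)\cap H=\emptyset$ for every $x\in H$. This property depends only on $f$ and $H$ as abstract objects, so it is preserved under bijections of the underlying index set, a fact we shall use below.

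Suppose first that $\kappa$ is regular and uncountable (the case $\kappa=\omega$, where necessarily $\lambda<\omega$, follows directly from the de Bruijn--Erd\H{o}s theorem applied to the conflict graph below). Fix a large regular $\theta$ and an increasing continuous chain $\langle M_\xi\rangle_{\xi<\kappa}$ of elementary submodels of $(H(\theta),\in)$ with $f,\kappa,\lambda\in M_0$, $\lambda\subseteq M_0$, $M_\xi\in M_{\xi+1}$ and $|M_\xi|<\kappa$. By regularity $\delta_\xi:=\sup(M_\xi\cap\kappa)<\kappa$, and since $\delta_\xi\in M_{\xi+1}$ one has $\delta_\xi<\delta_{\xi+1}$. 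Using elementarity pick, for each $\xi$, an ordinal $\gamma_\xi\in M_{\xi+1}$ with $\delta_\xi<\gamma_\xi<\kappa$; then $\gamma_\eta<\delta_{\eta+1}\le\delta_\xi<\gamma_\xi$ for $\eta<\xi$, so $\xi\mapsto\gamma_\xi$ is strictly increasing. The decisive point is that $\gamma_\xi,f\in M_{\xi+1}$ together with $|f(\gamma_\xi)|<\lambda\subseteq M_{\xi+1}$ force $f(\gamma_\xi)\subseteq M_{\xi+1}$, whence $\sup f(\gamma_\xi)\le\delta_{\xi+1}<\gamma_\zeta$ for every $\zeta>\xi$; consequently $\gamma_\zeta\notin f(\gamma_\xi)$ automatically. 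Thus the only way $\{\gamma_\xi\colon\xi<\kappa\}$ can fail to be free is to have $\gamma_\xi\in f(\gamma_\zeta)$ for some $\xi<\zeta$. Let $G$ be the graph on $\kappa$ in which $\xi<\zeta$ are joined when $\gamma_\xi\in f(\gamma_\zeta)$; since $\eta\mapsto\gamma_\eta$ is injective, each $\zeta$ has fewer than $\lambda$ neighbours below it, so a greedy colouring along $\kappa$ uses at most $\lambda$ colours. As $\kappa$ is regular and $\lambda<\kappa$, some colour class $Z$ has cardinality $\kappa$, and then $\{\gamma_\xi\colon\xi\in Z\}$ is free of cardinality $\kappa$.

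Now let $\kappa$ be singular with $\operatorname{cf}\kappa>\lambda$. Pick a suitable strictly increasing sequence $\langle\kappa_i\rangle_{i<\operatorname{cf}\kappa}$ of regular cardinals cofinal in $\kappa$ with $\lambda<\kappa_0$, and an increasing continuous chain $\langle N_i\rangle_{i<\operatorname{cf}\kappa}$ of elementary submodels of $(H(\theta),\in)$ with $f,\kappa,\lambda\in N_0$, $\lambda\subseteq N_0$, $\kappa_i\subseteq N_i$, $N_i\in N_{i+1}$ and $|N_i|=\kappa_i$. For each $i$ set $S_i:=(N_i\setminus\bigcup_{j<i}N_j)\cap\kappa$; by the choice of the $\kappa_i$ this set has cardinality $\kappa_i$, and since $\lambda\subseteq N_i$ one has $f(x)\subseteq N_i\cap\kappa$ for all $x\in N_i\cap\kappa$. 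Applying the regular case (legitimate as $\kappa_i$ is regular and $\lambda<\kappa_i$) to a copy of $x\mapsto f(x)\cap S_i$ on the cardinal $\kappa_i$ yields, for each $i$, a set $J_i\subseteq S_i$ with $|J_i|=\kappa_i$ that, since $J_i\subseteq S_i$, is free for $f$ itself. If $i<j$, $x\in J_i$ and $y\in J_j$, then $f(x)\subseteq N_i\cap\kappa$ while $y\notin N_i$, so $y\notin f(x)$: all conflicts between distinct blocks again point downwards, each $y$ conflicting with at most $|f(y)|<\lambda$ earlier points. Ordering $\bigcup_iJ_i$ first by block index we obtain a conflict graph of back-degree $<\lambda$; colouring it with at most $\lambda$ colours and using that $\sum_i\kappa_i=\kappa$ while $\operatorname{cf}\kappa>\lambda$, some colour class has cardinality $\kappa$ and is free.

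The remaining case, $\kappa$ singular with $\operatorname{cf}\kappa\le\lambda<\kappa$, is where the real difficulty lies: now $\kappa$ can be a union of $\lambda$ sets of smaller size, so the final colouring no longer isolates a large monochromatic set. The block construction still reduces matters to the regressive instance $f(\alpha)\subseteq\alpha$ (same $\kappa$ and $\lambda$); for that one uses the rank function $r(\alpha):=\sup\{r(\beta)+1\colon\beta\in f(\alpha)\}$, each of whose level sets $r^{-1}(\nu)$ is automatically free, and one must show --- by induction on $\kappa$ together with a careful counting/pressing-down analysis, which I expect to be the main obstacle --- that the levels cannot all be of size $<\kappa$. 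This delicate analysis is the combinatorial heart of Hajnal's theorem; for the complete argument I would follow \cite[\S 44]{EHMR} or \cite[\S 3.1]{Wil77}.
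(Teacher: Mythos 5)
The paper does not actually prove Hajnal's theorem: it is quoted as a known result with references to \cite{EHMR} and \cite{Wil77}, so there is no in-paper argument to compare your proof against. Your treatment of the first two cases is correct and clean: the elementary-submodel chain in the regular case does give points $\gamma_\xi$ with $f(\gamma_\xi)\subseteq M_{\xi+1}$ (via $|f(\gamma_\xi)|<\lambda\subseteq M_{\xi+1}$ and a surjection from some $\mu<\lambda$ lying in $M_{\xi+1}$), so all conflicts point downwards with back-degree $<\lambda$ and the greedy colouring finishes the job; the block decomposition for singular $\kappa$ with ${\rm cf}(\kappa)>\lambda$ then reduces correctly to the regular case, and the final counting uses ${\rm cf}(\kappa)>\lambda$ exactly where it should.

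The genuine gap is the third case, $\kappa$ singular with ${\rm cf}(\kappa)\leq\lambda<\kappa$, which you explicitly leave to the literature. This is not a peripheral case: it is the actual content of Hajnal's theorem (the other two cases are essentially folklore), and it is needed for the application in this paper, since Theorem C(i) is applied with $\lambda=\omega_1$ and $\kappa={\rm dens}\,X$ an arbitrary cardinal exceeding $\omega_1$ --- for instance $\kappa=\aleph_\omega$, where ${\rm cf}(\kappa)=\omega\leq\lambda$. Your sketch for this case (reduce to a regressive set mapping and consider the levels of the rank function $r(\alpha)=\sup\{r(\beta)+1\colon\beta\in f(\alpha)\}$) identifies free sets, but the assertion that some level must have size $\kappa$ is precisely what fails to follow from anything you have written: \emph{a priori} there can be up to $\lambda$-many (indeed ordinal-many) nonempty levels, and with ${\rm cf}(\kappa)\leq\lambda$ a union of that many sets of size $<\kappa$ can certainly cover $\kappa$. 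Closing this requires Hajnal's full induction on $\kappa$ with its delicate counting argument, which you have not supplied; as it stands the proposal proves the theorem only under the additional hypothesis ${\rm cf}(\kappa)>\lambda$ or $\kappa$ regular.
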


\section{Combinatorial analysis}\label{Section: combinatory}
\subsection{The r\^{o}le of Auerbach systems}\label{Section: Auerbach}
One of the main goals of the section is to see how to use Auerbach systems or, more generally, biorthogonal systems for the construction of separated families of unit vectors. Therefore, the first part of the section is dedicated to some general results about the existence of Auerbach systems. As a matter of fact, our arguments will actually produce subspaces with Auerbach bases, not merely Auerbach systems.

\begin{theorem} Let $\kappa\geq\mathfrak{c}$ be a cardinal number and let $X$ be a Banach space with $w^*\text{-}{\rm dens}\, X^*>\exp_2\kappa$. Then $X$ contains a subspace $Y$ with Auerbach basis and such that ${\rm dens}\, Y=\kappa^+$.
\end{theorem}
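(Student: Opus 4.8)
The plan is to extract a large Auerbach basis from a big space by combining a ``long sequence'' construction with the Erd\H{o}s--Rado partition theorem, using exposed points to get the needed biorthogonality. First I would recall that $w^*\text{-}{\rm dens}\, X^* > \exp_2 \kappa$ allows one to choose, by transfinite induction, a long sequence $(x_\alpha)_{\alpha < (\exp_2\kappa)^+}$ in $S_X$ together with functionals so that each $x_\alpha$ is ``new'' relative to the closed span of its predecessors; concretely, since $w^*$-density is not raised by passing to small subspaces, at each stage $\alpha$ the annihilator of ${\rm span}\{x_\beta : \beta < \alpha\}$ is still $w^*$-dense enough that we may pick $x_\alpha \in S_X$ and $x_\alpha^* \in S_{X^*}$ with $\langle x_\alpha^*, x_\alpha\rangle = 1$ and $x_\alpha^*$ vanishing on all earlier $x_\beta$, $\beta<\alpha$ (so the ``lower-triangular'' part of biorthogonality is free).

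The core difficulty is upgrading this one-sided biorthogonality to genuine biorthogonality with the Auerbach (norm-one) normalisation on an index set of size $\kappa^+$. Here I would use the Erd\H{o}s--Rado theorem in the form $(\exp_2\kappa)^+ \to (\kappa^+)^3_\kappa$: after a preliminary cardinality reduction to ensure the relevant separable-type data lives in a set of size $\le \kappa$ (e.g.\ fixing a $w^*$-dense subset, or recording the finitely-many ``interaction coefficients'' up to a fixed $\e$-net), one colours pairs (or triples) $\{\alpha,\beta\}$ by enough information about how $x_\alpha^*$ acts on $x_\beta$ and how $x_\beta^*$ acts on $x_\alpha$ to force, on a monochromatic set $Z$ of size $\kappa^+$, that all these cross-evaluations are uniformly small. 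Passing to the subspace $Y = \overline{\rm span}\{x_\alpha : \alpha \in Z\}$ and applying a small perturbation (replacing each functional by a suitable correction that kills the finitely many non-negligible cross terms, which is possible because the ``errors'' are summably small on each coordinate after the homogenisation) yields an honest biorthogonal system $\{y_\alpha; y_\alpha^*\}$ with $\|y_\alpha\| = \|y_\alpha^*\| = 1$.

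To finish, I would invoke the standard fact that a biorthogonal system $\{y_\alpha; y_\alpha^*\}_{\alpha\in Z}$ with both families normalised and with $\{y_\alpha\}$ spanning a dense subspace of $Y$ is precisely an Auerbach basis of $Y$ (the $y_\alpha^*$ automatically extend/restrict to the coordinate functionals on $Y$), and that ${\rm dens}\, Y = |Z| = \kappa^+$ because the $y_\alpha$ are uniformly separated (indeed $\|y_\alpha - y_\beta\| \ge |\langle y_\alpha^*, y_\alpha - y_\beta\rangle| = 1$ for $\alpha\ne\beta$). The main obstacle, as indicated, is the bookkeeping in the homogenisation step: one must arrange the colouring so that a single monochromatic class simultaneously controls \emph{both} directions of the cross-evaluations and so that the resulting perturbation of the functionals is small enough to preserve norms and biorthogonality; this is exactly where the double exponential $\exp_2\kappa$ (rather than a single exponential) enters, since controlling pairs of evaluations naturally requires colouring triples, hence the exponent $n=2$ in Erd\H{o}s--Rado.
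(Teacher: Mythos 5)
Your overall skeleton (a long ``triangular'' sequence built by a Mazur-type transfinite induction, followed by an application of Erd\H{o}s--Rado with $(\exp_2\kappa)^+\to(\kappa^+)^3_\kappa$) is the same as the paper's, but the crucial step is missing and the substitute you propose does not work. Homogeneity for a colouring by the values $\langle x_\alpha^*,x_\beta\rangle\in[-1,1]$ (a colour set of size $\mathfrak c\leq\kappa$, which is why the colouring is legitimate) only yields that all cross-evaluations on the monochromatic set $Z$ are equal to a single \emph{constant} $t$; nothing forces $t$ to be zero or small, and you give no argument for smallness. Your fallback --- perturbing each functional to kill the ``non-negligible cross terms'' --- fails on two counts: the offending terms are not finitely many but $\kappa^+$ many (one for each $\beta\in Z$), all equal to the same $t$, and if $t$ is not small the required correction is not a small perturbation, so it destroys the norm-one normalisation that the Auerbach property demands. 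Exact biorthogonality with exactly norm-one elements cannot be recovered by an $\e$-net/perturbation argument here.

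The paper's resolution, which is the idea you need, is to make the constant cancel rather than to make it small: one colours triples $\{\alpha<\beta<\gamma\}$ by $\langle\p_{\beta,\gamma},e_\alpha\rangle$, where $\p_{\alpha,\beta}$ is a Hahn--Banach norming functional for the \emph{molecule} $e_\alpha-e_\beta$ and each new vector $e_\gamma$ is chosen in $\bigcap_{\alpha<\beta<\gamma}\ker\p_{\alpha,\beta}$. On a monochromatic $\Lambda$ of size $\kappa^+$ one has $\langle\p_{\gamma,\eta},e_\alpha-e_\beta\rangle=t-t=0$, so the normalised differences $u_{\alpha,\beta}=(e_\alpha-e_\beta)/\|e_\alpha-e_\beta\|$, indexed by consecutive pairs from $\Lambda$, together with the $\p_{\alpha,\beta}$, form an exactly biorthogonal system of norm-one vectors and norm-one functionals with no perturbation at all. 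Note also that this set-up sidesteps a secondary gap in your induction: you ask for a norm-one $x_\alpha^*$ that vanishes on all predecessors \emph{and} attains its norm at $x_\alpha$, which is a norm-attainment requirement on the annihilator of a closed subspace that is not automatically satisfiable; the paper's triangularity goes the other way (later vectors are placed in the kernels of earlier functionals), so only Hahn--Banach is needed.
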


\begin{proof} Let $\kappa\geq\mathfrak{c}$ be a cardinal number.  Suppose that $X$ is a Banach space with the property that $\lambda:=w^*\text{-}{\rm dens}\, X^*> \exp_2\kappa$. We may then find a long basic sequence $(e_\alpha)_{\alpha<\lambda}$ of unit vectors in $X$ and a long sequence $(\p_{\alpha,\beta})_{\alpha<\beta<\lambda}$ of unit functionals in $X^*$ with the following properties:
\begin{romanenumerate}
\item $\p_{\alpha,\beta}$ is a norming functional for the molecule $e_\alpha-e_\beta$ for each $\alpha<\beta<\lambda$;
\item $e_\gamma\in\ker\p_{\alpha,\beta}$ for every $\alpha<\beta<\gamma<\lambda$.
\end{romanenumerate}
The existence of such sequences is proved by a very simple transfinite induction argument, which is a minor modification over the Mazur technique (\cite[Corollary 4.11]{HMVZ}). Assuming that we have already constructed elements $(e_\alpha)_{\alpha<\gamma}$ and $(\p_{\alpha,\beta})_{\alpha<\beta<\gamma}$ satisfying  the two properties above (for some $\gamma<\lambda$), the unique difference is that we additionally require $e_\gamma\in\cap_{\alpha<\beta<\gamma} \ker\p_{\alpha,\beta}$. This is indeed possible, as $|\{\p_{\alpha,\beta}\}_ {\alpha<\beta<\gamma}|\leq|\gamma|<\lambda$, whence the family $\{\p_{\alpha,\beta}\}_{\alpha<\beta<\gamma}$ does not separate points in $X$. In order to conclude the inductive argument, it is then sufficient to choose, for each $\alpha<\gamma$, a norming functional $\p_{\alpha,\gamma}$ for the vector $e_\alpha-e_\gamma$.\smallskip

We are now in position to invoke the Erd\H{o}s--Rado theorem; let us consider the following colouring $c\colon[\lambda]^3\to[-1,1]$ of $[\lambda]^3$. Given any $p\in[\lambda]^3$, we may uniquely write $p=\{\alpha,\beta,\gamma\}$ with $\alpha<\beta<\gamma$; we can therefore unambiguously set $c(p):=\langle\p_{\beta,\gamma},e_\alpha\rangle \in\C$. According to the Erd\H{o}s--Rado theorem, we have $(\exp_2\kappa)^+\to(\kappa^+)^3_\kappa$, whence from our assumptions we deduce \emph{a fortiori} $\lambda\to(\kappa^+)^3_\mathfrak{c}$. Consequently, there exists a subset $\Lambda$ of $\lambda$ with $|\Lambda|=\kappa^+$ which is monochromatic for the colouring $c$. In other words, there exists $t\in\C$ such that $\langle\p_{\beta,\gamma},e_\alpha\rangle=t$ for every triple $\alpha,\beta,\gamma\in\Lambda$ with $\alpha<\beta<\gamma$; of course, we immediately deduce that $e_\alpha-e_\beta\in\ker\p_{\gamma,\eta}$ whenever $\alpha,\beta,\gamma,\eta\in\Lambda$ satisfy $\alpha<\beta<\gamma<\eta$.\smallskip

In order to conclude the argument, consider the unit vectors $u_{\alpha,\beta}:=\frac{e_\alpha-e_\beta}{\|e_\alpha-e_\beta\|}$ ($\alpha,\beta\in\Lambda$, $\alpha<\beta$); those are actually well defined, since $e_\alpha\neq e_\beta$ for $\alpha\neq\beta$. From our argument above, we have $\langle\p_{\gamma,\eta},u_{\alpha,\beta}\rangle=0$ whenever $\alpha<\beta<\gamma<\eta$ are in $\Lambda$. Moreover, condition (i) clearly gives $\langle\p_{\alpha,\beta},u_{\alpha,\beta}\rangle=1$, while (ii) assures us that $\langle\p_{\alpha,\beta},u_{\gamma,\eta}\rangle=0$ ($\alpha,\beta,\gamma,\eta\in\Lambda$, $\alpha<\beta<\gamma<\eta$). At this stage it is, of course, immediate to construct an Auerbach system of length $\kappa^+$. To wit, find ordinal numbers $(\alpha_\theta)_{\theta<\kappa^+}$ and $(\beta_\theta)_{\theta<\kappa^+}$ in $\Lambda$ such that:
\begin{romanenumerate}
\item $\alpha_\theta<\beta_\theta<\alpha_\eta$ whenever $\theta<\eta<\kappa^+$;
\item $\Lambda=\{\alpha_\theta\}_{\theta<\kappa^+}\cup\{\beta_\theta\}_{\theta<\kappa^+}$.
\end{romanenumerate}
Then, the unit vectors $u_\theta:=u_{\alpha_\theta,\beta_\theta}$ ($\theta<\kappa^+$) with the corresponding biorthogonal functionals $\p_\theta:=\p_{\alpha_\theta,\beta_\theta}$ ($\theta<\kappa^+$) clearly constitute an Auerbach system. Finally, $\{u_\theta; \p_\theta\}_{\theta<\kappa^+}$ is an M-basis for the subspace $Y:=\overline{{\rm span}}\{u_\theta\}_{\theta< \kappa^+}$, which concludes the proof.
\end{proof}

It is a standard fact that the weak* density of the dual of $X$, $w^*\text{-dens}\, X^*$ does not exceed some cardinal number $\lambda$ if and only if there exists a linear continuous injection of $X$ into $\ell_\infty(\lambda)$. Consequently, when $w^*\text{-dens}\, X^*\leq\lambda$ we deduce that ${\rm dens}\, X\leq|\ell_\infty(\lambda)|=\exp\lambda$, or, in other words, ${\rm dens}\, X\leq \exp({w^*\text{-dens}\, X^*})$. Combining this inequality with the content of the previous theorem leads us to the following corollary.
\begin{corollary} Let $\kappa\geq\mathfrak{c}$ be a cardinal number. Suppose that $X$ is a Banach space with ${\rm dens}\, X>\exp_3\kappa$. Then $X$ contains a subspace $Y$ with an Auerbach basis and such that ${\rm dens}\, Y=\kappa^+$.
\end{corollary}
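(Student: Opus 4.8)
The plan is to deduce this corollary directly from the preceding theorem, by converting the hypothesis on ${\rm dens}\,X$ into the required lower bound on $w^*\text{-}{\rm dens}\,X^*$. The only ingredient needed, besides the theorem itself, is the standard fact recalled just above: for any cardinal $\lambda$ one has $w^*\text{-}{\rm dens}\,X^*\leq\lambda$ exactly when $X$ admits a linear continuous injection into $\ell_\infty(\lambda)$, and whenever this happens ${\rm dens}\,X\leq|\ell_\infty(\lambda)|=\exp\lambda$. Applying this with $\lambda:=w^*\text{-}{\rm dens}\,X^*$ gives the unconditional inequality ${\rm dens}\,X\leq\exp\bigl(w^*\text{-}{\rm dens}\,X^*\bigr)$.

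First I would combine this inequality with the standing hypothesis. Since ${\rm dens}\,X>\exp_3\kappa=\exp(\exp_2\kappa)$, we get $\exp\bigl(w^*\text{-}{\rm dens}\,X^*\bigr)>\exp(\exp_2\kappa)$. Because the map $\mu\mapsto\exp\mu=2^\mu$ is non-decreasing on cardinals, the assumption $w^*\text{-}{\rm dens}\,X^*\leq\exp_2\kappa$ would entail $\exp\bigl(w^*\text{-}{\rm dens}\,X^*\bigr)\leq\exp(\exp_2\kappa)$, contradicting the previous line; hence $w^*\text{-}{\rm dens}\,X^*>\exp_2\kappa$. Note that this contrapositive use of monotonicity requires no form of the (generalised) continuum hypothesis.

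Now the hypotheses of the previous theorem are satisfied for the very same cardinal $\kappa\geq\mathfrak{c}$: indeed $X$ is a Banach space with $w^*\text{-}{\rm dens}\,X^*>\exp_2\kappa$. Invoking that theorem produces a subspace $Y\subseteq X$ with an Auerbach basis and with ${\rm dens}\,Y=\kappa^+$, which is precisely the assertion of the corollary. As every step is a direct application of an already established fact, there is no genuine obstacle here; the statement is essentially a bookkeeping translation, via the embedding into $\ell_\infty(\lambda)$, between the hypotheses phrased in terms of ${\rm dens}\,X$ and those phrased in terms of $w^*\text{-}{\rm dens}\,X^*$.
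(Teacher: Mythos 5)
Your proposal is correct and follows exactly the paper's route: the paper derives the corollary from the inequality ${\rm dens}\,X\leq\exp\bigl(w^*\text{-}{\rm dens}\,X^*\bigr)$ (obtained via the injection into $\ell_\infty(\lambda)$) combined with the preceding theorem. The contrapositive monotonicity step you spell out is precisely the implicit computation the paper leaves to the reader.
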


In the case where $X$ is a WLD Banach space, we can improve the previous result and obtain, under some cardinality assumptions on ${\rm dens}\, X$, the existence of subspaces with Auerbach bases and of the maximal possible density character, namely equal to the density character of the Banach space $X$. We also point out that the restrictions on ${\rm dens}\, X$ are in fact necessary, in view of the main result of Section \ref{No uncountable Auerbach}. In Section \ref{Review WLD} we shortly reviewed the results concerning WLD Banach spaces to be used presently; let us also refer to Section \ref{Review combinatory} for information on Hajnal's theorem.

\begin{theorem} Every WLD Banach space $X$ with ${\rm dens}\,X>\omega_1$ contains a subspace $Y$ with Auerbach basis and such that ${\rm dens}\,Y={\rm dens}\,X$.
\end{theorem}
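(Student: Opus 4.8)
The plan is to mimic the structure of the previous theorem, but to exploit the WLD hypothesis in order to avoid the Erd\H{o}s--Rado colouring (which costs two exponentials) and replace it with Hajnal's free-set theorem (which costs nothing in terms of cardinality). Let $\kappa:={\rm dens}\,X$; by assumption $\kappa>\omega_1$, and in particular $\kappa$ is uncountable. First I would fix, using the WLD characterisation from Section \ref{Review WLD}, a countably $1$-norming M-basis $\{x_\gamma;x_\gamma^*\}_{\gamma\in\Gamma}$ for $X$, where $|\Gamma|=\kappa$; every functional in $X^*$ (in particular every norming functional we shall select) is then countably supported by this M-basis.

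The next step is to run the same transfinite induction as in the previous theorem to produce a long basic sequence $(e_\alpha)_{\alpha<\kappa}$ of unit vectors together with norming functionals $\p_{\alpha,\beta}\in S_{X^*}$ for the molecules $\frac{e_\alpha-e_\beta}{\|e_\alpha-e_\beta\|}$, satisfying $e_\gamma\in\ker\p_{\alpha,\beta}$ for $\alpha<\beta<\gamma$. (One can even take the $e_\alpha$ among the M-basis vectors $x_\gamma$, or at least arrange that everything lives inside the span of the M-basis, so that the supports below make sense.) Now, instead of colouring triples, I would use the \emph{supports}: for each pair $\alpha<\beta<\kappa$ the functional $\p_{\alpha,\beta}$ has a countable support $S_{\alpha,\beta}\subseteq\Gamma$. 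Define a set function $f$ on $\Gamma$ (after identifying $\Gamma$ with $\kappa$ via a bijection, and indexing $e_\alpha$ by this bijection) by letting $f(\alpha)$ collect all the supports $S_{\beta,\gamma}$ for $\beta,\gamma\leq\alpha$ — a countable union of countable sets, hence of size $<\omega_1$. The crucial point is that $\omega_1<\kappa$, so Hajnal's theorem (Theorem \ref{Hajnal th}, with $\lambda=\omega_1$) yields a free set $\Lambda\subseteq\kappa$ of cardinality $\kappa$.

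Freeness should translate exactly into the biorthogonality relation needed for an Auerbach system: if $\alpha<\beta<\gamma<\eta$ all lie in $\Lambda$, then $\gamma\notin f(\beta)\supseteq S_{\alpha,\beta}$ would force $\langle\p_{\gamma,\eta},\,e_\alpha\rangle$ and $\langle\p_{\gamma,\eta},\,e_\beta\rangle$ to vanish — here I must be a little careful about which index goes into $f$ of which, and may need to symmetrise or thin $\Lambda$ once more, but this is exactly the bookkeeping already present in the previous proof. Together with property (i) ($\langle\p_{\alpha,\beta},u_{\alpha,\beta}\rangle=1$) and property (ii) ($\langle\p_{\alpha,\beta},u_{\gamma,\eta}\rangle=0$ for $\alpha<\beta<\gamma<\eta$), one obtains that the molecules $u_{\alpha,\beta}$ ($\alpha<\beta$ in $\Lambda$) with functionals $\p_{\alpha,\beta}$ are pairwise biorthogonal. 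Reindexing $\Lambda$ as an increasing $\kappa$-sequence of pairs $\alpha_\theta<\beta_\theta<\alpha_{\theta+1}<\cdots$ exhausting $\Lambda$, the vectors $u_\theta:=u_{\alpha_\theta,\beta_\theta}$ and functionals $\p_\theta:=\p_{\alpha_\theta,\beta_\theta}$ form an Auerbach system of length $\kappa$, and $\{u_\theta;\p_\theta\}_{\theta<\kappa}$ is an M-basis for $Y:=\overline{\rm span}\{u_\theta\}_{\theta<\kappa}$, which has density $\kappa={\rm dens}\,X$ as required.

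The step I expect to be the genuine obstacle is making the reduction to Hajnal's theorem \emph{clean}: one needs the supports $S_{\alpha,\beta}$ to be computed with respect to a \emph{single} fixed M-basis and to know that the relation ``$\gamma\notin S_{\alpha,\beta}$'' really does kill $\langle\p_{\alpha,\beta},e_\gamma\rangle$ — which requires the $e_\gamma$ to be (close to) the M-basis vectors $x_\gamma$, or at least to have controlled supports themselves. Arranging the transfinite construction so that the $e_\alpha$'s and their norming functionals interact correctly with the fixed M-basis, and so that a single application of Hajnal's theorem (rather than an iteration) suffices, is where the WLD hypothesis is really used and where the care must go; the rest is a transcription of the previous theorem's endgame.
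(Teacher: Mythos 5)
You have correctly identified the two ingredients the paper uses (the countable-support property of M-bases in WLD spaces, and Hajnal's free-set theorem with $\lambda=\omega_1$), but the way you combine them does not work. The breaking point is the definition of your set function: you let $f(\alpha)$ be the union of the supports $S_{\beta,\gamma}$ over all pairs $\beta,\gamma\leq\alpha$, and claim this is ``a countable union of countable sets''. It is not: for $\alpha\geq\omega_1$ there are $|\alpha|^{2}=|\alpha|\geq\omega_1$ such pairs, so $|f(\alpha)|$ can be as large as $|\alpha|$, and $f$ does not map into $[\kappa]^{<\omega_1}$. Nor can you repair this by enlarging $\lambda$: Hajnal's theorem needs a \emph{single} $\lambda<\kappa$ bounding all the $|f(\alpha)|$, and when $\kappa=\mu^{+}$ is a successor the values $|f(\alpha)|$ climb up to $\mu$, forcing $\lambda\geq\mu^{+}=\kappa$. (Reducing a set mapping on \emph{pairs} to one on points is exactly where the cardinality blows up; a genuine pair version of the free-set theorem is a much more delicate statement and is not what Theorem \ref{Hajnal th} provides.) There is a second, independent tension you yourself flag: for ``$\gamma\notin S_{\alpha,\beta}$'' to kill $\langle\p_{\alpha,\beta},e_\gamma\rangle$ you need the $e_\gamma$ to be the M-basis vectors, but then you cannot also choose $e_\gamma$ inside $\bigcap\ker\p_{\alpha,\beta}$ as the transfinite construction of the previous theorem demands; you cannot have both.

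The fix is to discard the molecule machinery altogether --- it was only needed in the previous theorem to \emph{manufacture} vanishing of the norming functionals on later vectors, and in the WLD setting the countable-support property hands you that vanishing for free. Concretely: normalise the M-basis vectors $e_\alpha$ ($\alpha<\kappa$), pick for each one a norming functional $x_\alpha^*\in S_{X^*}$ with $\langle x_\alpha^*,e_\alpha\rangle=1$, and set $f(\alpha):=N_\alpha={\rm supp}\,x_\alpha^*$, which is genuinely countable because the M-basis countably supports every element of $X^*$. Now $f\colon\kappa\to[\kappa]^{<\omega_1}$ and $\omega_1<\kappa$, so Hajnal's theorem gives a free set $H$ of cardinality $\kappa$; freeness says precisely that $\langle x_\alpha^*,e_\beta\rangle=0$ for distinct $\alpha,\beta\in H$, so $\{e_\alpha;x_\alpha^*\}_{\alpha\in H}$ is already the desired Auerbach system, and $Y:=\overline{\rm span}\{e_\alpha\}_{\alpha\in H}$ has density $\kappa$. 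One application of Hajnal's theorem to a set function on single ordinals, no basic sequence, no kernels, no colouring of pairs.
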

\begin{proof} Let us denote by $\kappa={\rm dens}\,X$ and select an M-basis $\{e_\alpha;e_\alpha^*\}_{\alpha<\kappa}$ for $X$; we may assume that $\|e_\alpha\|=1$ ($\alpha<\kappa$). We may also find, for each $\alpha<\kappa$, a functional $x_\alpha^*\in S_{X^*}$ such that $\langle x_\alpha^*,e_\alpha\rangle=1$. According to the fact that $\{e_\alpha\}_{\alpha<\kappa}$ countably supports $X^*$, the sets
$$N_\alpha:={\rm supp}\,x_\alpha^*=\{\beta<\kappa\colon \langle x_\alpha^*,e_\beta \rangle\neq0\}\qquad(\alpha<\kappa)$$
are at most countable. We may therefore apply Hajnal's theorem to the function $f\colon\kappa\to[\kappa]^{<\omega_1}$ defined by $\alpha\mapsto N_\alpha$; this yields the existence of a set $H$, with $|H|=\kappa$, that is free for $f$. Given distinct $\alpha,\beta\in H$, the condition $\beta\notin f(\alpha)\cap H$ translates to $\beta\notin N_\alpha$, \emph{i.e.}, $\langle x_\alpha^*,e_\beta \rangle=0$. Consequently, the system $\{e_\alpha;x_\alpha^*\} _{\alpha\in H}$ is biorthogonal, and we are done.
\end{proof}

Having at our disposal those general results concerning the existence of Auerbach systems, we now pass to a basic proposition showing how to obtain separated families of unit vectors starting from a long Auerbach system.
\begin{proposition}\label{prop: Auerbach gives 1+} Suppose that the Banach space $X$ contains an Auerbach system of cardinality $\mathfrak{c}^+$. Then both $S_X$ and $S_{X^*}$ contain an uncountable symmetrically $(1+)$-separated subset.
\end{proposition}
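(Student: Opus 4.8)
The plan is to start from an Auerbach system $\{u_\theta;\p_\theta\}_{\theta<\mathfrak c^+}$ in $X$, so $\|u_\theta\|=\|\p_\theta\|=1$ and $\langle\p_\theta,u_\eta\rangle=\delta_{\theta,\eta}$, and to extract an uncountable index set on which the vectors $u_\theta$ are symmetrically $(1+)$-separated. First I would record the trivial estimate available for any biorthogonal pair: for $\theta\neq\eta$,
\[
\|u_\theta\pm u_\eta\|\geq |\langle\p_\theta,u_\theta\pm u_\eta\rangle|=|1\pm 0|=1,
\]
and likewise with the roles swapped, so the whole family is already \emph{symmetrically $1$-separated}. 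The work is to upgrade the weak inequality to a strict one on an uncountable subfamily. The natural device is to use a second norming functional: for each $\theta$ pick $\psi_\theta\in S_{X^*}$ with $\langle\psi_\theta,u_\theta\rangle=1$; then $\|u_\theta\pm u_\eta\|\geq|\langle\psi_\theta,u_\theta\rangle\pm\langle\psi_\theta,u_\eta\rangle|=|1\pm\langle\psi_\theta,u_\eta\rangle|$, and this exceeds $1$ as soon as $\langle\psi_\theta,u_\eta\rangle$ has the right sign (or is nonzero, in the complex case). So I want an uncountable set of indices for which, say, $\langle\psi_\theta,u_\eta\rangle<0$ (real case) or $\neq 0$ whenever $\theta<\eta$ — and symmetrically for the other order, which is where the quantifier $\|x\pm y\|$ helps: picking up strict inequality in \emph{one} of $u_\theta+u_\eta$, $u_\theta-u_\eta$ suffices.

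The combinatorial extraction is the heart of the argument, and it is precisely the $\mathfrak c^+$ in the hypothesis that makes it work. Consider the colouring that assigns to an ordered pair $\theta<\eta$ the value $\langle\psi_\theta,u_\eta\rangle\in[-1,1]$ (in the complex case, its argument, or a finer $\mathfrak c$-valued code of a complex number of modulus $\leq 1$); since $\mathfrak c^+=(\exp_0\mathfrak c)^+\to(\mathfrak c^+)^2_{\mathfrak c}$ by the Erd\H os--Rado theorem (the $n=0$ instance: $\kappa^+\to(\kappa^+)^2_\kappa$ fails in general, so more carefully one uses $(\exp_1\mathfrak c)^+\to(\mathfrak c^+)^2_{\mathfrak c}$ after first passing to a set of size $\exp_1\mathfrak c$ inside $\mathfrak c^+$ — but in fact here it is cleaner to note $\mathfrak c^+\to(\omega_1)^2_{\omega}$ is false too, so one really does invoke Erd\H os--Rado in the form $(2^{\aleph_0})^+\to(\aleph_1)^2_{\aleph_0}$), we get an uncountable (indeed size-$\mathfrak c^+$) homogeneous set $\Theta$ on which $\langle\psi_\theta,u_\eta\rangle\equiv t$ for all $\theta<\eta$ in $\Theta$. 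If $t\neq 0$ we are essentially done: choosing the sign of $u_\theta\pm u_\eta$ so that $|1\pm t|>1$ gives symmetric $(1+)$-separation (and in the complex case $t\neq0$ already forces $|1\pm t|$ or $|i\pm\text{something}|$ type estimates — more simply $\max\{|1+t|,|1-t|\}\geq 1+|{\rm Re}\,t|$ or $1+|{\rm Im}\,t|$, one of which is $>1$). The genuinely delicate case is $t=0$, i.e. the extracted system is \emph{triangular}: $\langle\psi_\theta,u_\eta\rangle=0$ for $\theta<\eta$. Here I would argue that $t=0$ cannot persist: reversing the role of the functionals, apply Erd\H os--Rado a second time to the colouring $\theta<\eta\mapsto\langle\psi_\eta,u_\theta\rangle$ on $\Theta$, obtaining an uncountable $\Theta'\subseteq\Theta$ with $\langle\psi_\eta,u_\theta\rangle\equiv s$ for $\theta<\eta$ in $\Theta'$. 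Now on $\Theta'$ we have, for $\theta<\eta$, both $\langle\psi_\theta,u_\eta\rangle=0$ and $\langle\psi_\eta,u_\theta\rangle=s$; if $s\neq0$ the pair $(u_\eta,u_\theta)$ is handled as above. The remaining possibility, $s=t=0$, would make $\{u_\theta;\psi_\theta\}_{\theta\in\Theta'}$ a genuine Auerbach-type biorthogonal system with the \emph{extra} property $\langle\psi_\theta,u_\eta\rangle=0$ for \emph{all} $\theta\neq\eta$ — but then $\psi_\theta$ is a norming functional vanishing on $u_\eta$ for every $\eta\neq\theta$, and this is exactly the situation forbidden by taking $\psi_\theta$ to be suitably chosen; more precisely I would arrange from the outset that $\psi_\theta$ is a norming functional for $u_\theta$ \emph{other than} $\p_\theta$ whenever $u_\theta$ is not smooth, and handle smooth points separately — a smooth point has a unique norming functional, namely $\p_\theta$, and for those indices one falls back on a different mechanism.

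I expect the main obstacle to be exactly this $t=0$ (triangular) case, and the cleanest route is probably not the two-step Erd\H os--Rado sketched above but rather the following: on the triangular system $\{u_\theta;\psi_\theta\}_{\theta\in\Theta}$ with $\langle\psi_\theta,u_\eta\rangle=0$ for $\theta<\eta$, consider instead the vectors $v_\theta:=u_\theta-u_{\theta+}$ where $\theta+$ is the successor of $\theta$ in $\Theta$ (or, passing to a cofinal copy of an ordinal, $v_\theta=u_{\theta}-u_{\theta'}$ with $\theta<\theta'$ consecutive) — no, more robustly: pass to $w_\theta := \tfrac{u_{\alpha_\theta}-u_{\beta_\theta}}{\|u_{\alpha_\theta}-u_{\beta_\theta}\|}$ for a pairing $\alpha_\theta<\beta_\theta$ of $\Theta$ as in the proof of the preceding theorem, with norming functionals $\chi_\theta$; then for $\theta\neq\eta$ one computes $\langle\chi_\theta, w_\theta\pm w_\eta\rangle$ and uses triangularity to kill the cross terms, obtaining $\|w_\theta\pm w_\eta\|\geq 1+$ (something positive coming from $\|u_{\alpha_\theta}-u_{\beta_\theta}\|$ versus the biorthogonality). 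The dual statement — an uncountable symmetrically $(1+)$-separated set in $S_{X^*}$ — is obtained by running the entire argument with the roles of $(u_\theta)$ and $(\p_\theta)$ interchanged, since an Auerbach system in $X$ gives, via the $\p_\theta\in S_{X^*}$ together with the evaluation functionals $\widehat{u_\theta}\in S_{X^{**}}$, an Auerbach system "in $X^*$" in the weak sense needed, and all estimates above used only the biorthogonality relations and the norming property, both of which are symmetric in the two coordinates. I would state and prove the $X$-side in detail and remark that the $X^*$-side follows \emph{mutatis mutandis}.
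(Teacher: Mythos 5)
Your overall strategy---extract a homogeneous subfamily by a partition argument, then repair the degenerate configurations by modifying the vectors---is the right one, and your final ``$w_\theta$'' idea is close in spirit to what the paper does; but as written the argument has three genuine gaps. First, symmetric $(1+)$-separation requires that \emph{both} $\|x+y\|>1$ \emph{and} $\|x-y\|>1$ hold for every distinct pair, so your remark that ``picking up strict inequality in one of $u_\theta+u_\eta$, $u_\theta-u_\eta$ suffices'' is simply false, and this is exactly where your ``nonzero case'' collapses: if $\langle\psi_\theta,u_\eta\rangle=t\neq0$ is real, then one of $|1+t|,\,|1-t|$ exceeds $1$ but the other is strictly less than $1$, so the single functional $\psi_\theta$ can only certify one of the two required inequalities and the pair is \emph{not} shown to be symmetrically $(1+)$-separated. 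Second, your colouring takes values in $[-1,1]$, i.e.\ uses $\mathfrak{c}$ colours; the Erd\H{o}s--Rado theorem at the level $\mathfrak{c}^+$ is $(2^{\aleph_0})^+\to(\aleph_1)^2_{\aleph_0}$, which tolerates only countably many colours and yields a homogeneous set of size $\omega_1$, not $\mathfrak{c}^+$. To handle a $\mathfrak{c}$-colouring of pairs one needs $(2^{\mathfrak{c}})^+$ indices, and the fix you sketch (passing to a set of size $\exp_1\mathfrak{c}$ inside $\mathfrak{c}^+$) is not available. Third, the degenerate ``triangular'' case is never actually closed: the devices you propose (a second norming functional $\psi_\theta\neq\varphi_\theta$ at non-smooth points, an unspecified ``different mechanism'' at smooth points, or quotient vectors whose cross terms are ``killed by triangularity'') are not carried out, and this is precisely where the real work of the proposition lies.

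For comparison, the paper sidesteps all three difficulties by colouring each pair $\{\alpha,\beta\}$ with one of only \emph{three} colours, recording directly whether $\|e_\alpha-e_\beta\|>1$ and whether $\|e_\alpha+e_\beta\|>1$; the instance $\mathfrak{c}^+\to(\omega_1)^2_3$ of Erd\H{o}s--Rado then produces an uncountable monochromatic family. In the good colour class one is done; if $\|e_\alpha+e_\beta\|\leq1$ throughout, one replaces $e_\alpha$ by $e_0+e_\alpha$ (a unit vector precisely because of that inequality) and uses $\varphi_0$ to push all sums up to $2$; if $\|e_\alpha-e_\beta\|\leq1$ throughout, one replaces $e_\alpha$ by $e_\alpha-\sum_{\gamma<\alpha}c_\alpha^\gamma e_\gamma$ with positive weights summing to $1$ and $c_\alpha^0\geq3/4$, and the biorthogonal functionals $\varphi_\alpha$ and $\varphi_0$ then certify both required inequalities. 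The lesson, if you want to salvage your approach, is that the quantity to colour should be the Boolean outcome of the two norm inequalities themselves, not the real value of a functional evaluation.
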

\begin{proof} Clearly, if $\{e_\gamma;\p_\gamma\}_{\gamma\in\Gamma}$ is an Auerbach system in $X$, then we can consider $\{\p_\gamma;e_\gamma\}_{\gamma\in\Gamma}$ as an Auerbach system in $X^*$; consequently, it suffices to prove the result for $X$. Let therefore $\{e_\alpha;\p_\alpha\}_{\alpha<\mathfrak{c}^+}$ be an Auerbach system in $X$ and consider the following colouring $c\colon [\mathfrak{c}^+]^2\to\{(>;>),\,(>;\leq),\,(\leq)\}$: $$\{\alpha,\beta\}\mapsto\begin{cases} 
(>;>) & \|e_\alpha -e_\beta\|>1,\,\|e_\alpha +e_\beta\|>1 \\
(>;\leq) & \|e_\alpha -e_\beta\|>1,\,\|e_\alpha +e_\beta\|\leq1 \\ 
(\leq) & \|e_\alpha -e_\beta\|\leq1.\end{cases}$$
The Erd\H{o}s--Rado theorem assures us of the validity of $\mathfrak{c}^+\to(\omega_1)_3^2$, whence the colouring $c$ admits a monochromatic set $\Lambda\subseteq\mathfrak{c}^+$ with cardinality $\omega_1$. Let us, for notational simplicity, well order the set $\Lambda$ in an $\omega_1$-sequence, thereby obtaining an $\omega_1$-sequence $(e_\alpha)_{\alpha <\omega_1}$ of unit vectors (with the corresponding norm-one biorthogonal functionals $(\p_\alpha)_{\alpha <\omega_1}$) with the property that either $\|e_\alpha-e_\beta\|>1$, $\|e_\alpha+e_\beta\|>1$ for distinct $\alpha,\beta<\omega_1$, or $\|e_\alpha-e_\beta\|>1$, $\|e_\alpha+e_\beta\|\leq1$ for every such $\alpha,\beta$, or finally $\|e_\alpha-e_\beta\|\leq1$ for such $\alpha,\beta$. In the first case, the family $\{e_\alpha\}_{\alpha\in\Lambda}$ is obviously symmetrically $(1+)$-separated, and we are done.\smallskip

In the second case, where $\|e_\alpha-e_\beta\|>1$ and $\|e_\alpha+e_\beta\|\leq1$ for distinct $\alpha,\beta<\omega_1$, we consider, for $1\leq\alpha<\omega_1$ the vector
$$\tilde{e}_\alpha:=e_0 + e_\alpha.$$
Our assumption evidently implies that these are, indeed, unit vectors. Moreover, for distinct $1\leq\alpha,\beta<\omega_1$ we have
$$\|\tilde{e}_\alpha-\tilde{e}_\beta\|=\|e_\alpha-e_\beta\|>1$$
$$\|\tilde{e}_\alpha+\tilde{e}_\beta\|=\|2e_0+e_\alpha+e_\beta\|\geq \langle\p_0,2e_0+e_\alpha+e_\beta\rangle=2,$$
whence the vectors $\{\tilde{e}_\alpha\}_{1\leq\alpha <\omega_1}$ are symmetrically $(1+)$-separated.\smallskip

Finally, let us consider the case where  $\|e_\alpha-e_\beta\|\leq1$ whenever $\alpha,\beta<\omega_1$ (regardless of the value of $\|e_\alpha+e_\beta\|$). We may then modify the vectors $e_\alpha$ as follows: for $1\leq\alpha<\omega_1$, we set
$$\tilde{e}_\alpha:=e_\alpha-\sum_{\gamma<\alpha}c_\alpha^\gamma e_\gamma,$$
where $\{c_\alpha^\gamma\}_{\gamma<\alpha}$ is a collection of positive real numbers such that, for $1\leq\alpha<\omega_1$, $c_\alpha^0\geq3/4$ and $\sum_{\gamma<\alpha}c_\alpha^\gamma=1$ (such collections do exist since the set $\{\gamma\colon\gamma <\alpha\} $ is countable).\smallskip

Let us first observe that this modification does not change the norm of the vectors: indeed, on the one hand 
$$\|\tilde{e}_\alpha\|=\left\Vert\sum_{\gamma<\alpha}c_\alpha^\gamma e_\alpha-\sum_{\gamma<\alpha}c_\alpha^\gamma e_\gamma\right\Vert\leq\sum_{\gamma<\alpha} c_\alpha^\gamma\|e_\alpha-e_\gamma\|\leq1;$$
on the other hand, $\langle\p_\alpha,\tilde{e}_\alpha\rangle=\langle\p_\alpha, e_\alpha\rangle=1$. Consequently, $\tilde{e}_\alpha\in S_X$.

Finally, the family $\{\tilde{e}_\alpha\}_{1\leq\alpha <\omega_1}$ is symmetrically $(1+)$-separated. Indeed, for any choice $1\leq\alpha<\beta<\omega_1$, we have
$$\|\tilde{e}_\alpha-\tilde{e}_\beta\|\geq\left|\langle\p_\alpha,\tilde{e}_\alpha-\tilde{e}_\beta \rangle\right| =\langle\p_\alpha,e_\alpha\rangle-\left\langle\p_\alpha,e_\beta-\sum_{\gamma<\beta} c^\gamma_\beta e_\gamma\right\rangle=1+c^\alpha_\beta>1,$$
$$\|\tilde{e}_\alpha+\tilde{e}_\beta\|\geq|\langle\p_0,\tilde{e}_\alpha+\tilde{e}_\beta \rangle|=|c_\alpha^0+c_\beta^0|\geq3/2.$$
\end{proof}

\begin{remark} It is perhaps worth observing that the above proof also yields us an alternative, shorter proof of the symmetric version of Kottman's theorem, \cite[Theorem A]{HKR-sym-sep} (although the argument there is self-contained and it contains no combinatorial features). Indeed, it is sufficient to replace the use of the Erd\H{o}s--Rado theorem with Ramsey's theorem, in the form $\omega\to(\omega)_3^2$, and remember that every infinite-dimensional Banach space contains an infinite Auerbach system, \cite{Day}. The argument then proceeds identically, with only obvious notational modifications.
\end{remark}

If we combine the above proposition with the previous statements concerning the existence of Auerbach systems, we immediately arrive at the following results.
\begin{corollary}\label{(1+) in large space} Let $X$ be a Banach space with $w^*\text{-}{\rm dens}\, X^*>\exp_2\mathfrak{c}$. Then both $X$ and $X^*$ contain uncountable symmetrically $(1+)$-separated families of unit vectors.

In particular, the unit sphere of every Banach space $X$ with ${\rm dens}\, X>\exp_3\mathfrak{c}$ contains an uncountable symmetrically $(1+)$-separated subset. 
\end{corollary}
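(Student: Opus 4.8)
The plan is to combine the two ingredients already assembled in this subsection, namely Theorem 3.1 (a Banach space with $w^*$-$\mathrm{dens}\,X^*>\exp_2\kappa$ contains a subspace with an Auerbach basis of density $\kappa^+$) and Proposition \ref{prop: Auerbach gives 1+} (an Auerbach system of cardinality $\mathfrak c^+$ produces uncountable symmetrically $(1+)$-separated subsets of both $S_X$ and $S_{X^*}$). First I would apply Theorem 3.1 with the choice $\kappa=\mathfrak c$, which is legitimate since the hypothesis $w^*$-$\mathrm{dens}\,X^*>\exp_2\mathfrak c$ is exactly what is required; this yields a subspace $Y\subseteq X$ carrying an Auerbach basis $\{u_\theta;\varphi_\theta\}_{\theta<\mathfrak c^+}$. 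In particular $X$ contains an Auerbach system of cardinality $\mathfrak c^+$, so Proposition \ref{prop: Auerbach gives 1+} applies verbatim and delivers uncountable symmetrically $(1+)$-separated subsets of $S_X$ and $S_{X^*}$. (A small point to note: the Auerbach system lives a priori in $Y\times Y^*$, but $\{u_\theta\}\subseteq S_X$ already, and the functionals $\varphi_\theta\in S_{Y^*}$ may be extended by Hahn--Banach to norm-one functionals on $X$ without destroying biorthogonality, so one genuinely has an Auerbach system in $X$; alternatively one simply observes that the separated set produced by the proposition inside $S_Y$ is automatically a separated set inside $S_X$, and likewise $S_{Y^*}$ embeds isometrically nowhere, so for the dual one argues directly with the Auerbach system $\{\varphi_\theta;u_\theta\}$ viewed in $X^*\times X^{**}$.)

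For the "in particular" clause I would invoke the standard fact recalled just before the corollary: $\mathrm{dens}\,X\le\exp(w^*\text{-}\mathrm{dens}\,X^*)$, equivalently $w^*\text{-}\mathrm{dens}\,X^*\ge\log_2(\mathrm{dens}\,X)$ in the appropriate cardinal sense. Concretely, if $\mathrm{dens}\,X>\exp_3\mathfrak c=\exp(\exp_2\mathfrak c)$, then $\exp(w^*\text{-}\mathrm{dens}\,X^*)\ge\mathrm{dens}\,X>\exp(\exp_2\mathfrak c)$, and since $\exp$ is (strictly) monotone on cardinals this forces $w^*\text{-}\mathrm{dens}\,X^*>\exp_2\mathfrak c$. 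Hence the first part of the corollary applies and gives the desired uncountable symmetrically $(1+)$-separated subset of $S_X$.

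There is essentially no obstacle here; the corollary is a mechanical splicing of Theorem 3.1 and Proposition \ref{prop: Auerbach gives 1+}, with the cardinal arithmetic $\exp_3\mathfrak c = \exp(\exp_2\mathfrak c)$ and the monotonicity of exponentiation doing all the work for the second assertion. The only mild care needed is the bookkeeping in the previous paragraph about whether the Auerbach system is to be regarded as living in $X$ or in the subspace $Y$, and the symmetric observation that a symmetrically $(1+)$-separated subset of the unit sphere of a subspace is also one in the whole space; both are routine. If one wished to be entirely self-contained one could also cite Corollary 3.2 directly for the second statement, since it asserts precisely that $\mathrm{dens}\,X>\exp_3\mathfrak c$ yields an Auerbach basis of density $\mathfrak c^+$, to which Proposition \ref{prop: Auerbach gives 1+} then applies.
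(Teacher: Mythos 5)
Your proposal is correct and follows essentially the same route as the paper, which obtains the corollary by applying the theorem on Auerbach systems with $\kappa=\mathfrak c$ and then invoking Proposition \ref{prop: Auerbach gives 1+}, with the ``in particular'' clause coming from the inequality ${\rm dens}\, X\leq\exp(w^*\text{-}{\rm dens}\, X^*)$ exactly as you describe. The only tiny caveat is that cardinal exponentiation is only weakly monotone in ZFC (not strictly), but your argument uses only the contrapositive of weak monotonicity, so nothing is lost.
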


\begin{corollary}\label{(1+) in WLD} Let $X$ be a WLD Banach space with ${\rm dens}\, X>\mathfrak{c}$. Then the unit spheres of $X$ and of $X^*$ contain uncountable symmetrically $(1+)$-separated subsets.
\end{corollary}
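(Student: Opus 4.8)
The plan is to derive this corollary by simply feeding the Auerbach basis theorem for WLD spaces (proved above: every WLD space of density greater than $\omega_1$ contains a subspace with an Auerbach basis of the same density) into Proposition \ref{prop: Auerbach gives 1+}.

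First I would record the trivial cardinal arithmetic: since $\dens X>\mathfrak{c}\geq\aleph_1$, the hypothesis $\dens X>\omega_1$ of the Auerbach basis theorem is met, and moreover $\dens X$, being a cardinal strictly larger than $\mathfrak{c}$, satisfies $\dens X\geq\mathfrak{c}^+$. Applying that theorem produces a subspace $Y\subseteq X$ with an Auerbach basis $\{e_\alpha;x_\alpha^*\}_{\alpha<\dens X}$. Inspecting its proof, the functionals $x_\alpha^*$ are chosen in $S_{X^*}$ and satisfy $\langle x_\alpha^*,e_\beta\rangle=\delta_{\alpha,\beta}$ for all the relevant indices, so $\{e_\alpha;x_\alpha^*\}_{\alpha<\dens X}$ is an Auerbach system \emph{in $X$ itself} of cardinality at least $\mathfrak{c}^+$. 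Restricting to any subfamily indexed by $\mathfrak{c}^+$ — a subsystem of an Auerbach system is again one — we obtain an Auerbach system in $X$ of cardinality exactly $\mathfrak{c}^+$, and Proposition \ref{prop: Auerbach gives 1+} then yields uncountable symmetrically $(1+)$-separated subsets of both $S_X$ and $S_{X^*}$, as desired.

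There is essentially no obstacle here: the two substantial ingredients — the construction of the long Auerbach basis via Hajnal's theorem, and the Erd\H{o}s--Rado colouring argument turning it into symmetrically $(1+)$-separated sets — are already in place. The only point requiring a line of care is to be sure that the Auerbach system handed to Proposition \ref{prop: Auerbach gives 1+} lives in $X$ (rather than merely in $Y$), so that the conclusion about $S_{X^*}$ is genuinely a statement about $X^*$; this is automatic from the proof as noted, but in any case a symmetrically $(1+)$-separated family in $S_{Y^*}$ can be transported to one in $S_{X^*}$ by taking norm-preserving Hahn--Banach extensions, since these restrict to the original functionals on $Y$ and hence preserve the values $\|x^*\pm y^*\|\geq\|(x^*\pm y^*)|_Y\|>1$.
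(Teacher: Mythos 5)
Your proposal is correct and is precisely the paper's argument: the corollary is presented there as an immediate consequence of combining the theorem on Auerbach bases in WLD spaces of density greater than $\omega_1$ with Proposition \ref{prop: Auerbach gives 1+}, exactly as you do. Your additional care about the system living in $X$ rather than merely in $Y$ (and the Hahn--Banach fallback) is sound but not needed, since the biorthogonal functionals in that theorem's proof are already chosen in $S_{X^*}$.
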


In conclusion to this section, we shall present a few, much simpler, renorming results, which also depend on the existence of Auerbach systems (or, more generally, biorthogonal systems). Those results are the non-separable counterparts to \cite[Section 5.1]{HKR-sym-sep}, with essentially the same proofs. Let us also mention \cite[Theorem 3]{MV equilateral}, where similar renorming techniques are shown to produce infinite equilateral sets.

Recall that a biorthogonal system $\{x_i;f_i\}_{i\in I}$ in $X$ is said to be \emph{bounded} if
$$\sup_{i\in I}\|x_i\|\cdot\|f_i\|<\infty.$$
Of course, up to a scaling, we can always assume that the system is \emph{normalised}, \emph{i.e.}, such that $\|x_i\|=1$ ($i\in I$).

\begin{proposition}Let $(X,\n)$ be a Banach space that contains a bounded biorthogonal system $\{x_i;f_i\}_{i\in I}$. Then there exists an equivalent norm $\nn\cdot$ on $X$ such that $S_{(X,\nn\cdot)}$ contains a symmetrically $2$-separated subset with cardinality $\abs{I}$. In particular, $\{x_i\}_{i\in I}$ is such a set.
\end{proposition}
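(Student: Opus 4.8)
The plan is to define the new norm $\nn\cdot$ so that the biorthogonal functionals $f_i$ become, up to normalisation, norm-attaining at $x_i$ in a way that forces $\nn{x_i\pm x_j}=2$. Since the system $\{x_i;f_i\}_{i\in I}$ is bounded, after rescaling I may assume $\|x_i\|=1$ and $\sup_{i}\|f_i\|=:M<\infty$. The natural candidate is
\[
\nn x:=\max\Bigl\{\tfrac1C\|x\|,\ \sup_{i\in I}|\langle f_i,x\rangle|\Bigr\},
\]
where $C\geq1$ is chosen large enough (any $C\geq M$ works) to guarantee that the second term never exceeds $\|x\|$, so that $\nn\cdot$ is an equivalent norm: indeed $\tfrac1C\|x\|\le\nn x\le\|x\|$ for all $x$, using $|\langle f_i,x\rangle|\le M\|x\|\le C\|x\|$.

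Next I would verify the two things that matter. First, $\nn{x_i}=1$ for every $i$: the term $\sup_j|\langle f_j,x_i\rangle|$ equals $1$ because $\langle f_i,x_i\rangle=1$ and each term is at most $1$ (here one may further need $C$ large enough that $\tfrac1C\|x_i\|=\tfrac1C\le1$, which holds since $C\ge1$), so the max is exactly $1$ and $x_i\in S_{(X,\nn\cdot)}$. Second, for distinct $i,j\in I$,
\[
\nn{x_i\pm x_j}\ \ge\ |\langle f_i,x_i\pm x_j\rangle|\ =\ |1\pm 0|\ =\ 1,
\]
which only gives $\ge1$; to reach $2$ one instead tests with $f_i$ on $x_i$ and $x_j$ separately is not enough, so the cleaner route is: $\nn{x_i+x_j}\ge|\langle f_i,x_i+x_j\rangle|$ — wait, this is $1$. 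The correct observation is that $\nn{x_i\pm x_j}\ge\nn{x_i\pm x_j}\ge\frac12(\nn{2x_i})$? No. The right computation uses that $\nn{\cdot}$ dominates the sup over a \emph{family} of functionals: take the functional $f_i$, note $\langle f_i, x_i\rangle=1$; we need a single functional seeing both summands. Since no $f_k$ need pair nontrivially with both $x_i$ and $x_j$, I instead argue $\nn{x_i - x_j}\ge |\langle f_i, x_i-x_j\rangle| = 1$ and $\nn{x_i-x_j}\ge|\langle f_j,x_i-x_j\rangle|=1$ — still only $1$. So the genuine mechanism must be different: one replaces $f_i$ by a rank-one-like modification, e.g. redefine the relevant functionals as $g_i:=f_i$ but include in the sup the quantities $|\langle f_i,x\rangle|+|\langle f_j,x\rangle|$...

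The honest plan, then, is to take
\[
\nn x:=\max\Bigl\{\tfrac1C\|x\|,\ \sup_{\{i,j\}\in[I]^2}\tfrac12\bigl(|\langle f_i,x\rangle|+|\langle f_j,x\rangle|\bigr)\Bigr\},
\]
equivalently $\nn x=\max\{\tfrac1C\|x\|,\ \tfrac12(m_1(x)+m_2(x))\}$ where $m_1\ge m_2$ are the two largest values of $|\langle f_i,x\rangle|$ (or $m_1$ alone if $|I|=1$). Then for distinct $i,j$ one has $\nn{x_i\pm x_j}\ge\tfrac12(|\langle f_i,x_i\pm x_j\rangle|+|\langle f_j,x_i\pm x_j\rangle|)=\tfrac12(1+1)=2$, and by the triangle inequality $\nn{x_i\pm x_j}\le\nn{x_i}+\nn{x_j}=2$, giving equality; also $\nn{x_i}=1$ as before since each two-element sum involving $x_i$ contributes $\tfrac12(1+0)=\tfrac12$. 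The main obstacle is precisely this bookkeeping — checking that $\nn\cdot$ is a norm (homogeneity and the triangle inequality are clear as it is a sup of seminorms, positivity needs $\tfrac1C\|\cdot\|$), that it is equivalent to $\|\cdot\|$ (the bound $\tfrac1C\|x\|\le\nn x\le\|x\|$ again using boundedness of the system), and that $\nn{x_i}$ really equals $1$ rather than something smaller, which forces the constraint on $C$. Once these routine verifications are in place, $\{x_i\}_{i\in I}\subseteq S_{(X,\nn\cdot)}$ is symmetrically $2$-separated and has cardinality $|I|$, completing the proof.
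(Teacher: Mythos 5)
Your final renorming is the right idea---it is essentially Kottman's renorming that the paper uses, namely a supremum over pairs of the sums $|\langle f_i,x\rangle|+|\langle f_k,x\rangle|$---but the factor $\tfrac12$ you insert breaks both key verifications. First, the separation computation contains an arithmetic slip:
$$\tfrac12\bigl(|\langle f_i,x_i\pm x_j\rangle|+|\langle f_j,x_i\pm x_j\rangle|\bigr)=\tfrac12(1+1)=1,$$
not $2$, so your norm only yields symmetric $1$-separation of the vectors $x_i$. Second, the claim $\nn{x_i}=1$ is false for your norm: the pairwise term contributes only $\tfrac12$ at $x_i$, and the other term is $\tfrac1C\|x_i\|=\tfrac1C$, so $\nn{x_i}=\max\{\tfrac1C,\tfrac12\}$. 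This equals $1$ only for $C=1$, whereas you require $C\geq M=\sup_i\|f_i\|$, and $M\geq1$ always (since $1=\langle f_i,x_i\rangle\leq\|f_i\|$), possibly with $M>2$, in which case $\nn{x_i}=\tfrac12$. One could rescale to $2x_i$ and recover $2$-separation on the new unit sphere when $C\geq2$, but that sacrifices the ``in particular'' clause asserting that $\{x_i\}_{i\in I}$ itself is the separated subset of the sphere; and if instead $C=1$, the rescaling is unavailable and only $1$-separation survives.

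The repair is simply to delete the $\tfrac12$ and to take the maximum with $\|x\|$ itself rather than with $\tfrac1C\|x\|$: after normalising so that $\|x_i\|=1$, set
$$\nn{x}=\max\Bigl\{\sup_{i\neq k\in I}\bigl(|\langle f_i,x\rangle|+|\langle f_k,x\rangle|\bigr),\ \|x\|\Bigr\}.$$
Then $\|x\|\leq\nn{x}\leq 2M\|x\|$, so the norm is equivalent; $\nn{x_i}=\max\{1,1\}=1$ because every pair sum at $x_i$ is at most $|\langle f_i,x_i\rangle|=1$; and $\nn{x_i\pm x_j}\geq|\langle f_i,x_i\pm x_j\rangle|+|\langle f_j,x_i\pm x_j\rangle|=1+1=2$, with equality by the triangle inequality. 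This is precisely the norm the paper writes down (attributed to Kottman), and with that correction your argument coincides with the paper's.
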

The proof is the same as in \cite[Proposition 5.1]{HKR-sym-sep}, and it is therefore omitted. Let us just mention that the norm $\nn\cdot$ can be explicitly defined as
$$\nn{x}=\max\left\{\sup_{i\neq k\in I}\left(\big|\langle f_i,x\rangle\big|+\big|\langle f_k,x\rangle\big|\right),\|x\|\right\} \quad (x\in X)$$
and that the above renorming was already present in the proof of \cite[Theorem 7]{Kottman}. \smallskip 

Note that, if $\{x_i;f_i\}_{i\in I}$ is a biorthogonal system and $\abs{I}$ has uncountable cofinality, we can pass to a subsystem with the same cardinality and which is bounded. In fact the sets $I_n:=\{i\in I\colon\|x_i\|\cdot\|f_i\|\leq n\}$ satisfy $\cup_{n=1}^\infty I_n = I$; hence, for some $n\in\N$, we have $\abs{I_n}=\abs{I}$ and, of course, $\{x_i;f_i\}_{i\in I_n}$ is a bounded biorthogonal system.\smallskip

We can now combine those simple observations with deep results concerning the existence of uncountable biorthogonal systems in non-separable Banach spaces. In his fundamental work \cite{Todorcevic}, Todor\u{c}evi\'c has proved the consistency with ZFC of the claim that every non-separable Banach space admits an uncountable biorthogonal system, proving in particular the result under Martin's Maximum (MM) (\cite[Corollary 7]{Todorcevic}). 

Let us mention--even if not strictly needed--that under suitable additional set theoretic assumptions there exist non-separable Banach spaces with no uncountable biorthogonal systems. The first such example, under (CH), is due to Kunen (unpublished) and appeared later in the survey \cite{Negr84}; other published results, under $\clubsuit$ and $\diamondsuit$ respectively, are \cite{Ost76,Shel85}. We also refer to \cite[Section 4.4]{HMVZ} for a modification, suggested by Todor\u{c}evi\'c, of the argument in \cite{Ost76}. Let us also refer to \cite{FaGo88,FiGo89,GoTa82,Laza81,Steg75,Todorcevic}, or \cite[Section 4.3]{HMVZ}, for some absolute (ZFC) results.

\begin{corollary}\label{uncountable sym-2-sep under renorming}It is consistent with ZFC that every non-separable Banach space $X$ admits an equivalent norm $\nn\cdot$ such that $S_{(X,\nn\cdot)}$ contains an uncountable symmetrically $2$-separated subset.
\end{corollary}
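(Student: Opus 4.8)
The plan is to combine the preceding renorming proposition with Todor\v{c}evi\'c's consistency result on the existence of uncountable biorthogonal systems. The relevant statement is that it is consistent with ZFC (for instance, under Martin's Maximum) that every non-separable Banach space admits an uncountable biorthogonal system; this is cited above from \cite[Corollary 7]{Todorcevic}. Working inside any model of ZFC witnessing this, let $X$ be an arbitrary non-separable Banach space.

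Given such an $X$, the first step is to produce an \emph{uncountable bounded} biorthogonal system in $X$. By Todor\v{c}evi\'c's result, $X$ admits an uncountable biorthogonal system $\{x_i;f_i\}_{i\in I}$; we may assume $\|x_i\|=1$ for all $i$ by scaling. We may also assume $|I|=\omega_1$ by passing to a subfamily, so that $|I|$ has uncountable cofinality. Then, exactly as in the remark preceding the corollary, the sets $I_n:=\{i\in I\colon \|f_i\|\leq n\}$ cover $I$, so by regularity of $\omega_1$ some $I_n$ is uncountable, and $\{x_i;f_i\}_{i\in I_n}$ is an uncountable normalised bounded biorthogonal system.

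The second step is to apply the preceding proposition to this bounded biorthogonal system: it furnishes an equivalent norm $\nn\cdot$ on $X$ such that $S_{(X,\nn\cdot)}$ contains a symmetrically $2$-separated subset of cardinality $|I_n|$, namely $\{x_i\}_{i\in I_n}$ itself, which is uncountable. Since this construction can be carried out for every non-separable Banach space $X$ in the chosen model, the statement follows. There is essentially no obstacle here beyond bookkeeping: the two genuinely nontrivial ingredients — the consistency of ``every non-separable space has an uncountable biorthogonal system'' and the renorming producing symmetric $2$-separation from a bounded biorthogonal system — are both already available (the former cited, the latter just proved). The only point requiring a moment's care is the passage from an arbitrary uncountable biorthogonal system to a bounded one, which is handled by the cofinality argument above.
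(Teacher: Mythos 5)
Your argument is correct and is exactly the one the paper intends: combine Todor\v{c}evi\'c's consistency result with the passage to a bounded subsystem via the cofinality/covering argument, and then apply the preceding renorming proposition. No issues.
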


In the particular case that the biorthogonal system is an Auerbach system, we can specialise the above renorming and obtain an approximation of the original norm.
\begin{proposition}\label{Auerbach and renorming} Assume that a Banach space $(X,\n)$ contains an Auerbach system $\{x_i,f_i\}_{i\in I}$. Then, for every $\e>0$, $X$ admits an equivalent norm $\nn\cdot$ such that $\n\leq\nn\cdot\leq(1+\e)\n$ and (for some $\delta>0$) $S_{(X,\nn\cdot)}$ contains a symmetrically $(1+\delta)$-separated subset with cardinality $\abs{I}$.
\end{proposition}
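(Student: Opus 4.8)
The plan is to mimic the renorming used in the previous proposition, but to tune it so that it only slightly perturbs the original norm, exploiting that an Auerbach system is normalised with norm-one functionals. Fix $\e>0$ and choose a small $\eta>0$ (to be determined, roughly $\eta\approx\e$); then define
$$\nn{x}:=\max\left\{\|x\|,\;(1+\eta)\sup_{i\neq k\in I}\tfrac12\left(\big|\langle f_i,x\rangle\big|+\big|\langle f_k,x\rangle\big|\right)\right\}\qquad(x\in X).$$
Here the scalar $\tfrac12$ (absent from the previous proposition) is what keeps the new norm close to the old one: since $\|f_i\|=1$ for every $i$, each term $\tfrac12(|\langle f_i,x\rangle|+|\langle f_k,x\rangle|)$ is at most $\|x\|$, so the supremum is $\leq\|x\|$ and hence $\nn{x}\leq(1+\eta)\|x\|$. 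On the other hand $\nn{x}\geq\|x\|$ trivially, so $\n\leq\nn\cdot\leq(1+\eta)\n$; choosing $\eta\leq\e$ gives the required two-sided estimate. In particular $\nn\cdot$ is an equivalent norm.

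Next I would check that the $x_i$ remain (almost) unit vectors for $\nn\cdot$ and are symmetrically separated. Since $\|x_i\|=1$ and $\langle f_j,x_i\rangle=\delta_{ij}$, for $x=x_i$ the supremum over pairs $\{j,k\}$ with $j\neq k$ is exactly $\tfrac12$ (attained by any pair containing $i$), so $\nn{x_i}=\max\{1,(1+\eta)/2\}=1$ provided $\eta\leq1$. Thus $\{x_i\}_{i\in I}\subseteq S_{(X,\nn\cdot)}$ already, with no normalisation needed. For distinct $i\neq k$ we estimate $\nn{x_i\pm x_k}$ from below using the pair $\{i,k\}$ inside the supremum:
$$\nn{x_i\pm x_k}\geq(1+\eta)\cdot\tfrac12\left(\big|\langle f_i,x_i\pm x_k\rangle\big|+\big|\langle f_k,x_i\pm x_k\rangle\big|\right)=(1+\eta)\cdot\tfrac12(1+1)=1+\eta.$$
Hence $\{x_i\}_{i\in I}$ is symmetrically $(1+\delta)$-separated for $\delta:=\eta>0$, and it has cardinality $|I|$, which is exactly the conclusion.

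The only genuine point requiring care — and the one I expect to be the main (mild) obstacle — is verifying that $\nn\cdot$ is actually a norm, i.e.\ that the functional $x\mapsto\sup_{i\neq k}\tfrac12(|\langle f_i,x\rangle|+|\langle f_k,x\rangle|)$ is a seminorm, so that its maximum with $\|\cdot\|$ is a norm. Subadditivity and absolute homogeneity are immediate since each map $x\mapsto\tfrac12(|\langle f_i,x\rangle|+|\langle f_k,x\rangle|)$ is a seminorm and a supremum of seminorms is a seminorm; the maximum with the genuine norm $\|\cdot\|$ then separates points. One should also record that the whole expression is finite, which is what the sandwich $\nn{x}\leq(1+\eta)\|x\|$ already gives. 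Since this is entirely parallel to \cite[Proposition~5.1]{HKR-sym-sep} and \cite[Theorem~7]{Kottman}, one could, alternatively, simply cite those computations and only highlight the role of the factor $\tfrac12$ and of the normalisation $\|f_i\|=1$, which is precisely what distinguishes the Auerbach case and yields the approximation $\n\leq\nn\cdot\leq(1+\e)\n$.
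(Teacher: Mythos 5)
Your proof is correct and follows exactly the route the paper intends: it specialises the renorming $\nn{x}=\max\{\|x\|,\sup_{i\neq k}(|\langle f_i,x\rangle|+|\langle f_k,x\rangle|)\}$ from the preceding proposition by inserting the factor $\tfrac12(1+\eta)$, which is precisely where the Auerbach normalisation $\|f_i\|=1$ is used to get the sandwich $\n\leq\nn\cdot\leq(1+\e)\n$. All the estimates ($\nn{x_i}=1$ for $\eta\leq 1$, and $\nn{x_i\pm x_k}\geq 1+\eta$ via the pair $\{i,k\}$) check out.
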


\subsection{$c_0(\Gamma)$ spaces}
In this chapter we shall investigate the existence of separated families of unit vectors in spaces of the form $c_0(\Gamma)$. Such spaces are natural candidates for investigation since they constitute the archetypal examples of spaces which fail to contain uncountable $(1+\e)$-separated families of unit vectors. This was already pointed out by Elton and Odell (\cite[Remark (2)]{E-O}); the very simple proof is also recorded in \cite[Proposition 2.1]{KaKo}. On the other hand, it was noted in \cite[Remark (2), p.~558]{GlaMer} that, for uncountable $\Gamma$, the unit sphere of $c_0(\Gamma)$ contains an uncountable $(1+)$-separated subset; it even contains an uncountable symmetrically $(1+)$-separated subset. The very quick argument is also included here.

\begin{example} For every uncountable set $\Gamma$, $c_0(\Gamma)$ contains an uncountable
symmetrically $(1+)$-separated family of unit vectors.\smallskip

Of course, it suffices to prove the claim for $\Gamma=\omega_1$. For $1\leq\alpha<\omega_1$, we choose a unit vector $x_\alpha\in c_0(\omega_1)$ such that 
$$x_\alpha(\lambda)=\begin{cases} 1 & \lambda=0,\alpha\\ <0 & 1\leq\lambda<\alpha\\ 
0 & \alpha<\lambda<\omega_1;\end{cases}$$
such a choice is indeed possible since, for $\alpha<\omega_1$, the set $\{\lambda\colon 1\leq\lambda<\alpha\}$ is at most countable. It is obvious that the family $(x_\alpha)_{1\leq\alpha<\omega_1}$ is symmetrically $(1+)$-separated, since, for $1\leq\alpha<\beta<\omega_1$, we have $\|x_\alpha-x_\beta\|\geq|x_\alpha(\alpha)-x_\beta(\alpha)|=1-x_\beta(\alpha)>1$ and $\|x_\alpha+x_\beta\|\geq|x_\alpha(0)+x_\beta(0)|=2$.
\end{example}

Our next result shows that the above obvious construction can not be improved, as every $(1+)$-separated family of unit vectors in a $c_0(\Gamma)$ space has cardinality at most $\omega_1$. This result improves an observation due to P.~Koszmider (see \cite[Proposition 4.13]{KaKo}). Just as the assertion about $(1+\e)$-separation, the proof exploits the $\Delta$-system lemma, whose statement was recalled in Section \ref{Review combinatory}.

\begin{theorem}\label{c0omega1} Let $A\subseteq S_{c_{0}(\Gamma)}$ be a $(1+)$-separated set. Then $|A|\leq\omega_1$.
\end{theorem}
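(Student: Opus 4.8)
The plan is to argue by contradiction: suppose $A \subseteq S_{c_0(\Gamma)}$ is $(1+)$-separated with $|A| = \omega_2$ (or any cardinal of uncountable cofinality exceeding $\omega_1$; $\omega_2$ suffices since a set of size $>\omega_1$ contains one of size $\omega_2$). For each $x \in A$, let $\mathrm{supp}\, x := \{\gamma \in \Gamma : x(\gamma) \neq 0\}$; since $x \in c_0(\Gamma)$ this support is countable. We thus have a family $\{\mathrm{supp}\, x\}_{x \in A}$ of countable sets indexed by $\omega_2$. The $\Delta$-system lemma as stated in the excerpt is phrased for \emph{finite} sets, so first I would reduce to the finite case: for each $x \in A$ and each $\e > 0$ the set $F_x^\e := \{\gamma : |x(\gamma)| \geq \e\}$ is finite, and a standard pigeonhole (using $\mathrm{cf}(\omega_2) = \omega_2 > \omega_1 = \mathrm{cf}$ of the countable-to-finite exhaustion) lets me pass to a subfamily $A' \subseteq A$ of size $\omega_2$ together with a single $\e_0 > 0$ and a single finite cardinality $n$ such that $|F_x^{\e_0}| = n$ for all $x \in A'$, and moreover the values of $x$ on $F_x^{\e_0}$ can be taken to lie within a fixed small box. (More carefully: fix a rational $\e_0$ and partition; then among the chosen ones, use that the finitely many ``large'' coordinate values lie in the compact set $[-1,-\e_0] \cup [\e_0,1]$ raised to the $n$-th power, and refine to make them pairwise within $\e_0/10$, say.)

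Next I would apply the $\Delta$-system lemma to $\{F_x^{\e_0}\}_{x \in A'}$: there is $A'' \subseteq A'$ of size $\omega_2$ and a fixed finite root $\Delta \subseteq \Gamma$ such that $F_x^{\e_0} \cap F_y^{\e_0} = \Delta$ for distinct $x, y \in A''$. Since the coordinates of any $x \in A''$ on the fixed finite set $\Delta$ range over a bounded set, a further pigeonhole (compactness of $[-1,1]^\Delta$) lets me refine once more to $A''' \subseteq A''$ of size $\omega_2$ on which the restrictions $x|_\Delta$ are all within $\e_0/10$ of one another in the sup norm — in fact I can simultaneously arrange that on the \emph{whole} of $\bigcup_{x\in A'''} F_x^{\e_0}$ things are controlled, but the key point is $\Delta$.

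Now take two distinct $x, y \in A'''$. Estimate $\|x - y\|_\infty = \sup_{\gamma} |x(\gamma) - y(\gamma)|$. On $\Delta$ the difference is at most $\e_0/10 < 1$. On $F_x^{\e_0} \setminus \Delta$, we have $y(\gamma) = 0$ there is false — rather $|y(\gamma)| < \e_0$ there (since $\gamma \notin F_y^{\e_0}$), so $|x(\gamma) - y(\gamma)| \le |x(\gamma)| + \e_0 \le 1 + \e_0$; this is not yet $\le 1$, which is the delicate point. The fix is to be greedier in the refinement: I should insist not merely $|F_x^{\e_0}| = n$ but control the \emph{largest} coordinate of each $x$ outside $\Delta$. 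Since $\|x\|_\infty = 1$, the sup is attained (up to $\e_0$) either on $\Delta$ or on $F_x^{\e_0}\setminus\Delta$; partition $A'''$ accordingly. If uncountably (indeed $\omega_2$-many) $x$ have $\sup$ essentially on $\Delta$, then for such $x, y$ we get $\|x\|, \|y\|$ both concentrated on $\Delta$ where $x, y$ agree to within $\e_0/10$, forcing $\|x \pm y\|$ to be close to $\|x|_\Delta \pm y|_\Delta\|$; choosing the refinement so that the $x|_\Delta$ converge (compactness) makes $\|x - y\|_\infty \leq 1$ for a suitable pair, contradiction. If instead $\omega_2$-many $x$ attain their sup essentially off $\Delta$, I further refine so that for each such $x$ there is a coordinate $\gamma_x \in F_x^{\e_0}\setminus \Delta$ with $|x(\gamma_x)| > 1 - \e_0/10$, and these witnessing coordinates, lying in pairwise-disjoint sets $F_x^{\e_0}\setminus\Delta$, are all distinct; then for $\gamma = \gamma_x$ we have $|x(\gamma_x)| > 1-\e_0/10$ while $|y(\gamma_x)| < \e_0$, giving $\|x-y\| \ge |x(\gamma_x) - y(\gamma_x)|$ — but that's an estimate for $\|x-y\| \geq 1 - \e_0/10 - \e_0$, again not $\le 1$, the wrong direction.

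So the genuinely correct contradiction must come from the \emph{other} side. Let me restructure: the hard part, and the real content, is that $(1+)$-separation is a lower bound, so to contradict it I must produce $x \neq y$ with $\|x - y\|_\infty \leq 1$. After the $\Delta$-system and the refinements above, every coordinate $\gamma$ of $x - y$ falls into one of: $\gamma \in \Delta$ (difference $< 1$ by the compactness refinement); $\gamma \in F_x^{\e_0}\setminus\Delta$ with $|x(\gamma)| \le 1$, $|y(\gamma)| < \e_0$, giving $|x(\gamma)-y(\gamma)|$ possibly up to $1 + \e_0$ — \emph{this} is what must be killed. It is killed only if I also arrange $x(\gamma) \geq 0$ whenever it is large, i.e. I should in the very first pigeonhole split $A$ according to the \emph{sign pattern} won't work as there are infinitely many coordinates. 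The honest resolution: use that $|x(\gamma) - y(\gamma)| \le 1$ \emph{fails} to be automatic, so one instead shows $\|x - y\| \le 1$ is impossible to avoid by a parity/counting argument — and here I expect the actual proof invokes that after the $\Delta$-system reduction the sets $F_x^{\e_0}\setminus\Delta$ are disjoint, hence if $|A| > \omega_1$ were $(1+)$-separated, looking at just the coordinates in $\Delta$ we'd get an uncountable $(1+)$-separated (as maps on the finite set $\Delta$, hence in a finite-dimensional space) family up to an $\e_0$ error, which is absurd since a bounded finite-dimensional set cannot contain an uncountable $\e$-separated subset. Concretely: refine so all $x|_\Delta$ lie within $\e_0/10$ of each other; then $\|x - y\|_\infty \le \max(\e_0/10,\ \sup_{\gamma \notin \Delta}|x(\gamma)-y(\gamma)|)$, and on $F_y^{\e_0}$ (disjoint from $F_x^{\e_0}$ off $\Delta$) $x$ is small, on $F_x^{\e_0}$ off $\Delta$ $y$ is small, elsewhere both are small — so $\sup_{\gamma\notin\Delta}|x(\gamma)-y(\gamma)| \le \max(\sup_\gamma|x(\gamma)|$ over $\gamma\notin F_x^{\e_0}$, \dots$)$. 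Wait — that sup is still only bounded by $1$, not below $1$.

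\textbf{The main obstacle}, then, is precisely this sign/cancellation issue, and I expect the real proof handles it by noting that $(1+)$-separation must in fact \emph{force} the large coordinates off $\Delta$ to carry the separation, then using the $\Delta$-system to make those witnessing coordinates disjoint and deriving a contradiction from a finer combinatorial count (possibly a second application of $\Delta$-system or of Erd\H{o}s–Rado to the sign patterns on $\Delta \cup F_x^{\e_0}$). I would carry out the steps in the order: (1) assume $|A| = \omega_2$; (2) pigeonhole to fix $\e_0$ and $n = |F_x^{\e_0}|$ on a subfamily of size $\omega_2$; (3) apply the $\Delta$-system lemma to get a root $\Delta$ and a subfamily $A''$ of size $\omega_2$; (4) use compactness of $[-1,1]^\Delta$ and of the box of possible large-coordinate values to refine to $A'''$ of size $\omega_2$ on which all relevant finite pieces agree up to $\e_0/10$; (5) extract two members whose difference is pointwise $< 1+$ on $\Delta$ and — by the disjointness and smallness of the tails together with a sign-alignment argument — pointwise $\le 1$ everywhere, contradicting $(1+)$-separation. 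The delicate step (5) — converting ``separation is carried by mutually disjoint large coordinates'' into an outright contradiction — is where I expect to spend essentially all the effort, and it is the reason $\omega_1$ (not $\omega$) is the right bound: $\omega_1$-sized $(1+)$-separated families do exist by the Example above, so the argument must genuinely use uncountable cofinality beyond $\omega_1$, i.e. $\mathrm{cf}(|A|) > \omega_1$, to run the nested pigeonholes, and then the $\Delta$-system lemma (which needs a regular uncountable cardinal) supplies the final rigidity.
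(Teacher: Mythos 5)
Your set-up is the right one and is essentially the paper's: reduce to $\Gamma=\omega_2$, pass to the finite sets of coordinates where $|x|$ exceeds a threshold, apply the $\Delta$-system lemma to obtain a finite root $\Delta$ with the remainders pairwise disjoint, and use compactness of the unit ball of $\ell_\infty(\Delta)$ to make the restrictions to $\Delta$ pairwise within distance $1$. But you have correctly diagnosed, and not repaired, the decisive gap: after these reductions you can only say that at a coordinate $\gamma\in F_x\setminus\Delta$ one has $|x(\gamma)|\leq 1$ while $|y(\gamma)|<\varepsilon_0$, giving $|x(\gamma)-y(\gamma)|\leq 1+\varepsilon_0$, which does not contradict $(1+)$-separation. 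None of the fixes you float (sign alignment, a second $\Delta$-system or Erd\H{o}s--Rado application to sign patterns) can work, essentially for the reason you yourself note: the offending coordinates range over an unbounded set of positions, there is no finite colouring to homogenise, and since separation is a lower bound, merely making the large coordinates disjoint proves nothing.

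The missing idea is a two-stage support-avoidance selection exploiting the regularity of $\omega_2$ together with the countability of supports in $c_0$; this, and not the nested pigeonholes, is where the hypothesis $|A|>\omega_1$ is genuinely used. Take the threshold to be $1/2$ (not a small $\varepsilon_0$), set $N_\alpha:=\{\gamma\colon|x_\alpha(\gamma)|\geq 1/2\}$, and write $N_\alpha=\Delta\cup\tilde N_\alpha$ with the $\tilde N_\alpha$ pairwise disjoint. First, the set $\bigcup_{\alpha<\omega_1}{\rm supp}\,x_\alpha$ has cardinality at most $\omega_1<\omega_2$, hence is bounded in $\omega_2$, whereas $\sup_{\alpha<\omega_2}\min\tilde N_\alpha=\omega_2$ by disjointness; so there is $\beta<\omega_2$ whose set $\tilde N_\beta$ lies entirely above every ${\rm supp}\,x_\alpha$ with $\alpha<\omega_1$, i.e.\ each such $x_\alpha$ vanishes identically on $\tilde N_\beta$. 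Second, the countable set $\{\gamma\colon 0<|x_\beta(\gamma)|<1/2\}$ meets only countably many of the pairwise disjoint sets $\{\tilde N_\alpha\}_{\alpha<\omega_1}$, so some $\alpha<\omega_1$ has $\tilde N_\alpha$ disjoint from it, i.e.\ $x_\beta$ vanishes identically on $\tilde N_\alpha$. For this pair, every coordinate $\gamma\notin\Delta$ at which both $x_\alpha(\gamma)\neq0$ and $x_\beta(\gamma)\neq0$ satisfies $|x_\alpha(\gamma)|<1/2$ and $|x_\beta(\gamma)|<1/2$, hence $|x_\alpha(\gamma)-x_\beta(\gamma)|<1$; at a coordinate where only one of them is non-zero the difference is at most $1$. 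Therefore $\|x_\alpha-x_\beta\|>1$ could only be witnessed on $\Delta$, contradicting the compactness refinement. Without this step your argument does not close.
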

\begin{proof} The assertion is trivially true for $|\Gamma|\leq\omega_1$; on the other hand, if $|\Gamma|>\omega_2$ and the unit sphere of $c_0(\Gamma)$ contains a $(1+)$-separated family of cardinality $\omega_2$, then the union of the supports of those vectors is a set with cardinality at most $\omega_2$; therefore, we would find a $(1+)$-separated family with cardinality $\omega_{2}$ in the unit sphere of $c_{0}(\omega_{2})$. Consequently, we may without loss of generality restrict our attention to the case $\Gamma=\omega_{2}$.\smallskip

Assume, in search for a contradiction, that the unit sphere of $c_0(\omega_2)$ contains a subset $\{x_\alpha\} _{\alpha<\omega_2}$ such that $\|x_\alpha-x_\beta\|>1$ for every choice of distinct $\alpha,\beta<\omega_2$. For $\alpha<\omega_2$, consider the finite sets $N_\alpha:=\{|x_\alpha|\geq 1/2\}$, where $\{|x_\alpha|\geq 1/2\}$ is a shorthand for the set $\{\gamma<\omega_2\colon |x_\alpha(\gamma)|\geq1/2\}$. The $\Delta$-system lemma allows us to assume (up to passing to a~subset that still has cardinality $\omega_2$) that there is a finite subset $\Delta$ of $\omega_2$ such that $N_\alpha\cap N_\beta=\Delta$ whenever $\alpha\neq\beta$; moreover, using again the regularity of $\omega_2$, we can also assume that all the $N_\alpha$'s have the same (finite) cardinality. Since $\Delta$ is a finite set, the unit ball of $\ell_\infty(\Delta)\subseteq c_0(\omega_2)$ is compact and it can be covered by finitely many balls of radius $1/2$; also, $x_{\alpha|\Delta}\in B_{\ell_\infty(\Delta)}$ for every $\alpha$. These two facts imply that there is a subset $\Omega$ of $\omega_2$, still with cardinality $\omega_2$, such that all $x_{\alpha|\Delta}$'s ($\alpha\in\Omega$) lie in the same ball; in other words, up to passing to a~further subset, we can assume that, for every $\alpha,\beta<\omega_2$, we have $\|x_{\alpha|\Delta}-x_{\beta|\Delta}\|\leq1$.\smallskip

Let us summarise what we have obtained so far. If, by contradiction, the conclusion of the theorem is false, then there is a $(1+)$-separated subset $\{x_\alpha\}_{\alpha<\omega_2}$ of $S_{c_0(\omega_2)}$ such that the following hold true:
\begin{romanenumerate}
\item there is a finite set $\Delta\subseteq\omega_2$ with $N_\alpha\cap N_\beta=\Delta$ for distinct $\alpha,\beta<\omega_2$;
\item the sets $N_\alpha$ have the same finite cardinality;
\item $\|x_{\alpha|\Delta}-x_{\beta|\Delta}\|\leq1$ for every $\alpha,\beta<\omega_2$.
\end{romanenumerate}\smallskip

We will show that these properties lead to a contradiction. According to (i), we may write $N_\alpha=\Delta\cup\tilde{N}_\alpha$, where $\tilde{N}_\alpha\subseteq \omega_2\setminus\Delta$ and the sets $\tilde{N}_\alpha$ are mutually disjoint. Moreover, by (ii), they also have the same finite cardinality, say $k$; let us then write $\tilde{N}_\alpha=\{\lambda_1^\alpha,\dots,\lambda_k^\alpha\}$ with $\lambda_1^\alpha<\lambda_2^\alpha<\dots<\lambda_k^\alpha<\omega_2$. The disjointness of the $\tilde{N}_\alpha$'s forces in particular $\lambda_1^\alpha\neq\lambda_1^\beta$ for $\alpha\neq\beta$ and this in turn implies
\begin{equation}\label{sup=omega_2}
\sup_{\alpha<\omega_2}\lambda_1^\alpha=\omega_2.
\end{equation}

We then consider the set $\cup_{\alpha<\omega_1}{\rm supp}x_\alpha$; such a set has cardinality at most $\omega_1$, so its supremum is necessarily strictly smaller than $\omega_2$. Therefore, combining this information and (\ref{sup=omega_2}), we infer that there exists an ordinal $\beta<\omega_2$ such that $$\sup\left(\bigcup_{\alpha<\omega_1}{\rm supp}x_\alpha\right)<\lambda_1^\beta;$$ in particular, this implies that $\tilde{N}_\beta\cap {\rm supp}x_\alpha=\varnothing$ for every $\alpha<\omega_1$. Moreover, the set $\{0< |x_\beta|<1/2\}$ is of course countable, whence it can intersect at most countably many of the disjoint sets $\{\tilde{N}_\alpha\}_{\alpha<\omega_1}$. Consequently, we can find an ordinal $\alpha<\omega_1$ such that $\{0<|x_\beta|<1/2\}\cap\tilde{N}_\alpha=\varnothing$ too. In order to understand where the supports of those $x_\alpha$ and $x_\beta$ could possibly intersect, we note that for every $\gamma$ we have the disjoint union $${\rm supp}\, x_\gamma=\Delta\cup\left(\tilde{N}_\gamma\cup\{0<|x_\gamma|<1/2\}\right).$$ Therefore, using our previous choices of $\beta$ and $\alpha$, we obtain:
$${\rm supp}x_\alpha\cap{\rm supp}x_{\beta}=\Delta\cup\left(
\left(\tilde{N}_\alpha\cup\{0<|x_\alpha|<1/2\}\right) \cap
\left(\tilde{N}_\beta\cup\{0<|x_{\beta}|<1/2\}\right) \right)$$
$$=\Delta\cup\left(\{0<|x_\alpha|<1/2\}\cap\{0<|x_\beta|<1/2\}\right).$$
Finally, for every $\gamma\in\{0<|x_\alpha|<1/2\}\cap\{0<|x_\beta|<1/2\}$ it is obvious that we have $|x_\alpha(\gamma)-x_\beta(\gamma)|\leq1$; consequently, the condition $\|x_\alpha-x_\beta\|>1$ can only be witnessed by coordinates from $\Delta$, \emph{i.e.}, $\| x_{\alpha|\Delta}-x_{\beta|\Delta}\|>1$; however, this readily contradicts (iii).
\end{proof}

\begin{remark} Let us notice in passing that if we consider the spaces $c_{00}(\Gamma)$, then every $(1+)$-separated family of unit vectors is actually at most countable; this is also pointed out in \cite[Remark (2), p. 558]{GlaMer} and it immediately follows from the $\Delta$-system lemma exactly as in \cite[Remark (2)]{E-O}.
\end{remark}

The results mentioned so far in this section draw a complete picture about separated families of unit vectors in $c_0(\Gamma)$ spaces; we therefore conclude this section presenting some results concerning renormings of those spaces.\smallskip

Proposition \ref{Auerbach and renorming} in the previous section applies in particular to the canonical basis of $c_0(\Gamma)$; consequently, the canonical norm $\n_\infty$ on $c_0(\Gamma)$ can be approximated by norms whose unit spheres contain (for some $\e>0$) symmetrically $(1+\e)$-separated subsets of cardinality $|\Gamma|$. If we combine this result with James' non distortion theorem, we obtain the approximation of every equivalent norm on $c_0(\Gamma)$. Before we proceed, a historical remark about the non-separable counterparts of James' theorems is in order.\smallskip

\begin{remark}It was communicated to us by W.~B.~Johnson that the non-separable analogue of both James' non-distortion theorems were known to the experts immediately after the paper of James \cite{james-distortion} had been published. At the beginning of this century, A.~S.~Granero being unaware of this situation, circulated a note containing the proofs of these theorems (\cite{granero}). It seems that the first published proof of non-separable versions of James's non-distortion theorems may be found in \cite[Theorem 3]{hajeknovotny}.\end{remark}

\begin{proposition} Every equivalent norm on $c_0(\Gamma)$ can be approximated by norms whose unit spheres contain (for some $\delta>0$) symmetrically $(1+\delta)$-separated subsets of cardinality $|\Gamma|$. 
\end{proposition}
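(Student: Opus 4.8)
The plan is to combine the previous proposition (approximation of the canonical norm $\n_\infty$ of $c_0(\Gamma)$ by norms with large symmetrically $(1+\delta)$-separated subsets of the sphere) with the non-separable version of James' non-distortion theorem for $c_0$. Recall that the latter asserts: if $\nn\cdot$ is any equivalent norm on $c_0(\Gamma)$ and $\eta>0$, then there is an infinite-dimensional subspace, in fact a subspace of the form spanned by a suitable block sequence (indeed by a sub-family $c_0(\Gamma')$ with $|\Gamma'|=|\Gamma|$, or rather by a rescaling of the canonical basis vectors), on which $\nn\cdot$ is $(1+\eta)$-equivalent to the canonical $\sup$-norm after rescaling; more precisely, one obtains normalised vectors $(u_\gamma)_{\gamma\in\Gamma}$ (disjointly supported perturbations of the unit vector basis) such that for all finitely supported scalars $(a_\gamma)$ one has $\sup_\gamma|a_\gamma|\le\nn{\sum a_\gamma u_\gamma}\le(1+\eta)\sup_\gamma|a_\gamma|$. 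So on the closed subspace $Z=\overline{\mathrm{span}}\{u_\gamma\}$, the norm $\nn\cdot$ is $(1+\eta)$-close, after the obvious identification, to the canonical $c_0(\Gamma)$-norm.

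First I would fix an equivalent norm $\nn\cdot$ on $c_0(\Gamma)$ and a tolerance $\e>0$; pick $\eta>0$ small (to be specified) and apply the non-separable James non-distortion theorem to obtain the vectors $(u_\gamma)_{\gamma\in\Gamma}$ and the subspace $Z$ as above, so that the linear isomorphism $T\colon c_0(\Gamma)\to Z$ sending the $\gamma$-th canonical vector to $u_\gamma$ satisfies $\n_\infty\le \nn{T(\cdot)}\le(1+\eta)\n_\infty$. Second, I would transport the renorming from the previous proposition through $T$: on $c_0(\Gamma)$ we have (applying Proposition \ref{Auerbach and renorming} to the canonical basis with parameter $\eta$) an equivalent norm $\n'$ with $\n_\infty\le\n'\le(1+\eta)\n_\infty$ and, for some $\delta>0$, a symmetrically $(1+\delta)$-separated subset of $S_{(c_0(\Gamma),\n')}$ of cardinality $|\Gamma|$. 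Third, I would define the target norm on $Z$ by $|\!|\!|z|\!|\!|':=\n'(T^{-1}z)$ and then extend it to all of $c_0(\Gamma)$ as a new equivalent norm $\vertiii{\cdot}$ that agrees with $\nn\cdot$ outside a controlled amount and with $|\!|\!|\cdot|\!|\!|'$ on $Z$ — the cleanest way being to observe that it suffices to produce, for every $\e>0$, an equivalent norm on $c_0(\Gamma)$ that is $(1+\e)$-equivalent to $\nn\cdot$ and whose sphere carries a large symmetrically $(1+\delta)$-separated set; and for that it is enough to renorm $Z$ (which is isometrically $\vertiii{\cdot}'$) and patch, or simply to note that $c_0(\Gamma)$ is isomorphic to $Z\oplus_\infty c_0(\Gamma')$ for some complemented copy, so that a renorming of $Z$ by $\vertiii{\cdot}'$ and the original $\nn\cdot$ on the complement, combined by a max, yields the desired global norm. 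Choosing $\eta$ small enough compared to $\e$ (a quick computation with the chain $\n_\infty\le\n'\le(1+\eta)\n_\infty\le(1+\eta)\nn{T(\cdot)}$ and $\nn{T(\cdot)}\le(1+\eta)\n_\infty\le(1+\eta)\n'$) makes the resulting norm $(1+\e)$-equivalent to $\nn\cdot$.

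The separated set then comes for free: the symmetrically $(1+\delta)$-separated subset $\{x_i\}_{i\in I}$ of $S_{(c_0(\Gamma),\n')}$ with $|I|=|\Gamma|$ is carried by $T$ to $\{Tx_i\}$, which lies in the unit sphere of $(Z,\vertiii{\cdot}')$ and hence of $(c_0(\Gamma),\vertiii{\cdot})$, and $\vertiii{Tx_i\pm Tx_j}=\n'(x_i\pm x_j)>1+\delta$ for distinct $i,j$, so it is symmetrically $(1+\delta)$-separated of cardinality $|\Gamma|$. I expect the main obstacle to be purely bookkeeping: making precise the "patching"/direct-sum step so that the global norm is genuinely $(1+\e)$-equivalent to $\nn\cdot$ while restricting to $\vertiii{\cdot}'$ on $Z$, and tracking the interplay of the two small parameters $\eta$ and $\delta$ (note $\delta$ produced by Proposition \ref{Auerbach and renorming} may itself depend on $\eta$, but this causes no circularity since we only need some $\delta>0$). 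Since all of this is routine once James' non-distortion theorem is invoked, and that theorem is quoted in the preceding remark, the proof is short; accordingly I would present it in a few lines, citing \cite[Theorem 3]{hajeknovotny} for the non-distortion input and Proposition \ref{Auerbach and renorming} for the renorming of the canonical basis.
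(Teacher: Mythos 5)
Your proposal is correct and follows essentially the same route as the paper: invoke the non-separable James non-distortion theorem to find a subspace on which the given norm is almost the canonical sup-norm, apply Proposition \ref{Auerbach and renorming} to the canonical norm, transport the resulting norm to that subspace via the isomorphism, and extend it to the whole space by the standard extension lemma for equivalent norms on subspaces (the paper uses exactly this, with no need for the complemented-sum patching you offer as an alternative). The bookkeeping with the two small parameters works out just as you anticipate, giving a norm $(1+\e)^2$-equivalent to the original one.
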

\begin{proof}
Let $\n$ be any equivalent norm on $c_0(\Gamma)$ and fix $\e>0$. By the non-separable version of James' non-distortion theorem, there exists a subspace $Y$ of $c_0(\Gamma)$, with ${\rm dens}(Y)=|\Gamma|$, such that $Y$ is $\e$-isometric to $(c_0(\Gamma), \n_\infty)$. Let $T\colon(Y,\n)\rightarrow(c_0(\Gamma), \n_\infty)$ be a linear isomorphism witnessing this fact; in particular, we may assume that $\|T\|\leq1+\e$ and $\|T^{-1}\|\leq1$. According to Proposition \ref{Auerbach and renorming}, we can choose a norm $\nu$ on $(c_0(\Gamma), \n_\infty)$ with $\n_\infty\leq\nu\leq(1+\e)\n_\infty$ and such that $S_{(c_0(\Gamma), \nu)}$ contains (for some $\delta>0$) a symmetrically $(1+\delta)$-separated family of cardinality $|\Gamma|$.

Consider a new norm on $Y$ given by $y\mapsto\nu(Ty)$ ($y\in Y$); since
$$\|y\|\leq\|Ty\|_\infty\leq\nu(Ty)\leq(1+\e)\|Ty\|_\infty\leq(1+\e)^2\|y\|,$$
it is well known that we can extend the norm $\nu\circ T$ to a norm $\nn\cdot$ defined on $(c_0(\Gamma), \n)$ and still satisfying $\n\leq\nn\cdot\leq(1+\e)^2\n$. Finally, $T$ is an isometry from $(Y,\nn\cdot)$ onto $(c_0(\Gamma), \nu)$, whence the unit sphere of $(c_0(\Gamma), \nn\cdot)$ contains a symmetrically $(1+\delta)$-separated family of cardinality $|\Gamma|$.
\end{proof}
\begin{remark} Let us note in passing that, by a very similar argument, the unit sphere of every equivalent norm on $\ell_1(\Gamma)$ contains (for some $\e>0$) a symmetrically $(1+\e)$-separated family of cardinality $|\Gamma|$.
\end{remark}

In the last result for this section we provide a sufficient condition for a renorming of $c_0(\Gamma)$ to contain an uncountable $(1+)$-separated family of unit vectors; the argument elaborates over the proof of Proposition \ref{prop: Auerbach gives 1+}. Further sufficient conditions will follow from the results in Section \ref{Section: geometry}. \smallskip

Let $\Gamma$ be any set and $\n$ be a norm on $c_0(\Gamma)$ (not necessarily equivalent to the canonical $\n_\infty$ norm of $c_0(\Gamma)$). We say that $\n$ is a \emph{lattice norm} if $\|x\|\leq\|y\|$ for every pair of vectors $x,y\in c_0(\Gamma)$ such that $|x(\gamma)|\leq|y(\gamma)|$ for each $\gamma\in\Gamma$. In other words, $\n$ is a lattice norm if $(c_0(\Gamma),\n)$ is a normed lattice when endowed with the canonical coordinate-wise partial ordering. Accordingly, in what follows we shall denote by $|x|$ the element defined by $|x|(\gamma):=|x(\gamma)|$ ($\gamma\in\Gamma$).

\begin{proposition} Let $\Gamma$ be an uncountable set and $\n$ be a (not necessarily equivalent) lattice norm on $c_0(\Gamma)$. Then the unit sphere of $(c_0(\Gamma),\n)$ contains an uncountable $(1+)$-separated subset.
\end{proposition}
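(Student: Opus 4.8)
The plan is to mimic the structure of the proof of Proposition \ref{prop: Auerbach gives 1+}, exploiting the canonical unit vectors $\{e_\gamma\}_{\gamma\in\Gamma}$ of $c_0(\Gamma)$ together with the coordinate functionals $\{e_\gamma^*\}_{\gamma\in\Gamma}$, which form an Auerbach-type system for the lattice norm in the following weak sense: by the lattice property, $\|e_\gamma\|$ is independent of $\gamma$ (any two $e_\gamma$ have the same modulus up to a permutation of coordinates, and a lattice norm is permutation-invariant on the span of finitely many basis vectors — one should check this carefully, or simply normalize each $e_\gamma$ separately and track the resulting constants). After normalizing, set $\|e_\gamma\|=1$ for all $\gamma$. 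The first reduction is to $\Gamma=\omega_1$: pick any $\omega_1$ distinct indices and discard the rest; a subset of a lattice still carries a lattice norm (restricting to $c_0$ of that subset). So assume $\Gamma=\omega_1$.

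The key step is a trichotomy on the behavior of $\|e_\alpha\pm e_\beta\|$, exactly as in Proposition \ref{prop: Auerbach gives 1+}. Consider the colouring $c\colon[\omega_1]^2\to\{(>;>),(>;\leq),(\leq)\}$ sending $\{\alpha,\beta\}$ to the case recording whether $\|e_\alpha-e_\beta\|>1$ and/or $\|e_\alpha+e_\beta\|>1$. Since $\omega_1\to(\omega_1)^2_3$ fails, I cannot directly apply Erdős–Rado here — but I do not need an uncountable \emph{monochromatic} set, only an uncountable $(1+)$-separated family. Instead, observe that for a lattice norm $\|e_\alpha-e_\beta\|=\||e_\alpha-e_\beta|\|=\|e_\alpha+e_\beta\|$ since $|e_\alpha(\gamma)-e_\beta(\gamma)|=|e_\alpha(\gamma)+e_\beta(\gamma)|$ for every $\gamma$ (both equal $1$ when $\gamma\in\{\alpha,\beta\}$ and $0$ otherwise). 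Thus the value of $\|e_\alpha\pm e_\beta\|$ is a single number $d\ge 0$ depending only on the fact that $\alpha\ne\beta$ (again by permutation-invariance of the lattice norm on two-element supports), so the colouring is constant: \emph{either} $d>1$, in which case $\{e_\gamma\}_{\gamma<\omega_1}$ is already uncountable $(1+)$-separated and we are done; \emph{or} $d\le 1$.

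So the main obstacle is the degenerate case $\|e_\alpha\pm e_\beta\|\le 1$ for all $\alpha\ne\beta$. Here I would adapt the third case of Proposition \ref{prop: Auerbach gives 1+}: for $1\le\alpha<\omega_1$ define $\tilde e_\alpha:=e_\alpha-\sum_{\gamma<\alpha}c_\alpha^\gamma e_\gamma$ with positive coefficients summing to $1$ and $c_\alpha^0\ge 3/4$. The lattice-norm computation $\|\tilde e_\alpha\|\le\sum_{\gamma<\alpha}c_\alpha^\gamma\|e_\alpha-e_\gamma\|\le 1$ goes through verbatim, and $\langle e_\alpha^*,\tilde e_\alpha\rangle=1$ forces $\|\tilde e_\alpha\|=1$; crucially $e_\alpha^*$ is norm-one for a lattice norm on $c_0(\Gamma)$ (it is a lattice norm so the coordinate functional has norm $\|e_\alpha^*\|=\sup\{|x(\alpha)|\colon\|x\|\le 1\}$, which equals $\|e_\alpha\|^{-1}=1$ after our normalization — a small lemma worth stating). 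Then for $1\le\alpha<\beta<\omega_1$, $\|\tilde e_\alpha-\tilde e_\beta\|\ge|\langle e_\alpha^*,\tilde e_\alpha-\tilde e_\beta\rangle|=1+c_\beta^\alpha>1$ since the $e_\alpha$-coordinate of $\tilde e_\beta$ is $-c_\beta^\alpha<0$. This yields an uncountable $(1+)$-separated family (only the minus-distance is needed for the statement, so the $\|\tilde e_\alpha+\tilde e_\beta\|$ estimate via $e_0^*$ is a bonus giving even symmetric separation). The only genuine care needed throughout is the justification that a lattice norm on $c_0(\Gamma)$ is invariant under finitely-supported coordinate permutations and that coordinate functionals are bounded with the stated norm — these are elementary but should be spelled out.
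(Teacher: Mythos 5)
There is a genuine gap: your argument hinges on the claim that a lattice norm on $c_0(\Gamma)$ is invariant under (finitely supported) coordinate permutations, from which you deduce that $\|\tilde e_\alpha\pm\tilde e_\beta\|$ is a single constant $d$ and the colouring degenerates. This is false. The lattice property only says that $|x|\leq|y|$ coordinatewise implies $\|x\|\leq\|y\|$; it imposes no symmetry between coordinates. Even after normalising the basis vectors, the pairwise distances can genuinely depend on the pair: for instance, fix a graph $E$ on $\omega_1$ and set $\|x\|=\max\bigl(\|x\|_\infty,\ \sup_{\{\alpha,\beta\}\in E}\tfrac34(|x(\alpha)|+|x(\beta)|)\bigr)$. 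This is a lattice norm with $\|e_\alpha\|=1$ for all $\alpha$, yet $\|e_\alpha-e_\beta\|=3/2$ when $\{\alpha,\beta\}\in E$ and $=1$ otherwise. Taking $E$ to be a Sierpi\'nski-type graph with no uncountable clique and no uncountable independent set, the resulting colouring of $[\omega_1]^2$ has no uncountable monochromatic set, so your dichotomy (all pairs $>1$-separated, or all pairs $\leq1$-separated) is unavailable, and --- as you yourself observe --- no Ramsey-type theorem at $\omega_1$ can restore it. Your proposal therefore covers only the two homogeneous extremes and has no mechanism for the mixed case.

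The paper's proof closes exactly this hole by making the correction coefficients adaptive rather than global: for each $\beta$ one chooses $c^\alpha_\beta\geq0$ with $c^\alpha_\beta>0$ \emph{if and only if} $\|\tilde e_\alpha-\tilde e_\beta\|\leq1$ (and the $c^\alpha_\beta$ summing to $1$ when any of them is positive), and sets $f_\beta=\tilde e_\beta-\sum_{\alpha<\beta}c^\alpha_\beta\tilde e_\alpha$. The separation estimate then splits pair by pair: if $c^\alpha_\beta=0$ one uses $|f_\alpha-f_\beta|\geq|\tilde e_\alpha-\tilde e_\beta|$ and the lattice property to inherit $\|f_\alpha-f_\beta\|\geq\|\tilde e_\alpha-\tilde e_\beta\|>1$, while if $c^\alpha_\beta>0$ one uses $|f_\alpha-f_\beta|\geq(1+c^\alpha_\beta)|\tilde e_\alpha|$. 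Note also that the paper derives the lower bounds purely from the lattice monotonicity rather than from coordinate functionals; your observation that $\|e^*_\alpha\|=\|e_\alpha\|^{-1}$ for a lattice norm is correct and would serve as a substitute in the degenerate case, but it does not repair the missing mixed case. Your remaining reductions (restriction to $\omega_1$, normalisation, $\|\tilde e_\alpha-\tilde e_\beta\|=\|\tilde e_\alpha+\tilde e_\beta\|$ for disjointly supported vectors) are fine.
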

\begin{proof} It is sufficient to prove the result for $\Gamma=\omega_1$. Let us denote by $e_\alpha$ ($\alpha<\omega_1$) the $\alpha$-th element of the canonical basis, \emph{i.e.}, $e_\alpha(\gamma):=\delta_{\alpha,\gamma}$; we also denote by $\tilde{e}_\alpha$ the unit vector $\tilde{e}_\alpha:=e_\alpha/\|e_\alpha\|$. We may now choose, for every $\beta<\omega_1$, real numbers $(c^\alpha_\beta)_{\alpha<\beta}$ subject to the following three conditions:
\begin{romanenumerate}
\item $c^\alpha_\beta\geq0$ for each $\alpha<\beta$;
\item $c^\alpha_\beta>0$ if and only if $\|\tilde{e}_\alpha-\tilde{e}_\beta\|\leq1$;
\item if $c^\alpha_\beta>0$ for some $\alpha<\beta$, then $\sum_{\gamma<\beta}c^\gamma_\beta=1$.
\end{romanenumerate}
Observe that condition (iii) could be equivalently stated as the requirement $\sum_{\alpha<\beta}c^\alpha_\beta$ to equal either $0$ or $1$; note, further, that (ii) implies $c^\alpha_\beta\cdot\|\tilde{e}_\alpha-\tilde{e}_\beta\|\leq c^\alpha_\beta$ for each $\alpha<\beta$.\smallskip

We are now in position to define vectors $f_\beta\in c_0(\omega_1)$ ($\beta<\omega_1$) as follows:
$$f_\beta:=\tilde{e}_\beta-\sum_{\alpha<\beta}c^\alpha_\beta\,\tilde{e}_\alpha.$$
We readily verify that $\|f_\beta\|=1$. In fact, if $\sum_{\alpha<\beta}c^\alpha_\beta=0$, then $f_\beta=\tilde{e}_\beta$ and there is nothing to prove. In the other case, \emph{i.e.}, $\sum_{\alpha<\beta}c^\alpha_\beta=1$, we have, according to (ii),
$$\|f_\beta\|=\left\|\sum_{\alpha<\beta}c^\alpha_\beta\,\tilde{e}_\beta-\sum_{\alpha<\beta} c^\alpha_\beta\,\tilde{e}_\alpha\right\|\leq\sum_{\alpha<\beta} c^\alpha_\beta\|\tilde{e}_\beta-\tilde{e}_\alpha\|\leq\sum_{\alpha<\beta}c^\alpha_\beta =1.$$
On the other hand, $|f_\beta|\geq|\tilde{e}_\beta|$, whence $\|f_\beta\|=1$ follows from the lattice property.\smallskip

To conclude, we prove that the vectors $(f_\alpha)_{\alpha<\omega_1}$ are $(1+)$-separated. Given $\alpha<\beta<\omega_1$, we distinguish two cases. If $c^\alpha_\beta=0$, then by our previous choice we have $\|\tilde{e}_\alpha-\tilde{e}_\beta\|>1$. Moreover, $|f_\alpha-f_\beta|\geq|\tilde{e}_\alpha-\tilde{e}_\beta|$ and the lattice property imply $\|f_\alpha-f_\beta\|\geq\|\tilde{e}_\alpha-\tilde{e}_\beta\|>1$. On the other hand, if $c^\alpha_\beta>0$, we note that $|f_\alpha-f_\beta|\geq|(1+c^\alpha_\beta)\tilde{e} _\alpha|$; consequently, exploiting once more the lattice property, we conclude that $\|f_\alpha-f_\beta\|\geq\|(1+c^\alpha_\beta)\tilde{e}_\alpha\|=1+c^\alpha_\beta>1$.
\end{proof}
\begin{remark} Concerning lattice norms, we would like to point out here the validity of the analogous result for lattice norms on the space $C([0,\omega_1])$. Inspection of the proof of \cite[Proposition 2]{FHZ} shows that if $\n$ is any (not necessarily equivalent) lattice norm on $C([0,\omega_1])$, then $(C([0,\omega_1]),\n)$ contains an isometric copy of $(c_0(\omega_1),\n_\infty)$. Consequently, the unit sphere of $(C([0,\omega_1]),\n)$ contains an uncountable $(1+)$-separated subset.
\end{remark}

\section{A geometric approach involving exposed points}\label{Section: geometry}
In the present chapter we shall be concerned with the proof of Theorem B; both clauses (i) and (ii) will depend on rather powerful results concerning the notion of an exposed point (which were recorded in Section \ref{review exposed}), whence the title of this section. We will also present an abstract and a more general result, whose proof follows an analogous pattern, and show some its consequences. The material is naturally divided into two parts, the former concerning $(1+)$-separation and the latter investigating $(1+\e)$-separation.
Finally, in the last section, we shall prove claim (iii) of theorem B.

\subsection{$(1+)$-separation}\label{Section: geometry (1+)}
An important result for the structure of quasi-reflexive Banach spaces is the fact that every quasi-reflexive Banach space $X$ contains a reflexive subspace $Y$ with the same density character as $X$. For non-separable $X$ this was proved in \cite[Theorem 4.6]{CiHo}, while the assertion in the separable case follows from some results by Johnson and Rosenthal (\emph{cf.} \cite[Corollary IV.1]{JoRo}). Consequently, the next result implies assertion (i) in Theorem B. 

\begin{theorem}\label{(1+) in reflexive} Let $X$ be an infinite-dimensional, reflexive Banach space. Then the unit sphere of $X$ contains a symmetrically $(1+)$-separated subset of cardinality ${\rm dens}\, X$.
\end{theorem}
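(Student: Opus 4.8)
The plan is to build the symmetrically $(1+)$-separated set by a transfinite induction of length $\kappa := {\rm dens}\, X$, at each step choosing a new unit vector that is symmetrically $(1+)$-separated from all the previously chosen ones. The basic engine is the observation recorded in Section~\ref{review exposed}: a reflexive space is in particular weakly compactly generated in a strong sense, so every nonempty closed convex bounded set is weakly compact and hence has an exposed point (in fact is the closed convex hull of its strongly exposed points, by Lindenstrauss--Troyanski). I would exploit this to produce, at each stage, a vector on the sphere that sticks out past a suitable finite-codimensional slice.

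First I would set up the induction. Suppose that for some ordinal $\theta<\kappa$ we have already built unit vectors $(e_\alpha)_{\alpha<\theta}$ and norming functionals $(\p_\alpha)_{\alpha<\theta}$ in $S_{X^*}$, with $\langle\p_\alpha, e_\alpha\rangle = 1$, such that $\{e_\alpha\}_{\alpha<\theta}$ is symmetrically $(1+)$-separated and, crucially, $\langle\p_\alpha, e_\beta\rangle = 0$ whenever $\beta<\alpha$ (a one-sided biorthogonality, as in the proof of the first theorem of Section~\ref{Section: combinatory}). Since $|\theta|<\kappa = {\rm dens}\, X = {\rm dens}\, X^*$ (reflexivity), the closed subspace $H := \bigcap_{\alpha<\theta}\ker\p_\alpha$ has density $\kappa$, hence is infinite-dimensional; being a subspace of a reflexive space, $B_H$ is weakly compact, so it has an exposed point $e_\theta\in S_H$, exposed by some $\p_\theta\in S_{X^*}$ with $\langle\p_\theta, e_\theta\rangle = 1$. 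By construction $\langle\p_\theta, e_\beta\rangle = 0$ is not what we need; rather we need $\langle\p_\alpha, e_\theta\rangle = 0$ for $\alpha<\theta$, which holds because $e_\theta\in H\subseteq\ker\p_\alpha$. The point of choosing an \emph{exposed} point rather than an arbitrary unit vector of $H$ is that it gives strict inequalities: for $\alpha<\theta$ we have $e_\theta\neq e_\alpha$ and $e_\theta\neq -e_\alpha$ (the latter because $e_\theta\in\ker\p_\alpha$ while $\langle\p_\alpha,-e_\alpha\rangle=-1$), and since $\p_\alpha$ exposes $e_\alpha$ in $B_X$ we get $|\langle\p_\alpha, \pm e_\theta - e_\alpha\rangle|$ giving us the room; I would turn this into $\|e_\theta \pm e_\alpha\| > 1$ by a direct estimate. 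The right way to see it: $\|e_\theta - e_\alpha\| \geq |\langle\p_\alpha, e_\theta - e_\alpha\rangle| = 1$, with equality forcing $e_\theta$ to lie on the face exposed by $\p_\alpha$ — but the only point of $B_X$ with $\langle\p_\alpha,\cdot\rangle$ maximal is $e_\alpha$, contradiction; similarly $\|e_\theta+e_\alpha\|\geq|\langle\p_\alpha,e_\theta+e_\alpha\rangle|=1$ with equality forcing $e_\theta+e_\alpha$ onto the exposed face, hence $e_\theta+e_\alpha = e_\alpha$, i.e. $e_\theta=0$, again impossible.

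The only subtlety I anticipate is the degenerate/boundary case where the exposed-point argument gives $\|e_\theta\pm e_\alpha\|\geq 1$ with the strict inequality failing — which as just argued cannot happen because exposedness of $e_\alpha$ is strict — but one should double-check the case $\theta$ a limit ordinal is handled uniformly (it is: the induction hypothesis is a statement about the whole initial segment, no continuity is needed). A second point requiring a word: we must make sure $e_\theta$ can genuinely be taken in $S_H$, i.e. that $B_H$ is not a single point — it is not, since $\dim H = \infty$. Thus the construction runs for all $\theta<\kappa$, producing $\{e_\alpha\}_{\alpha<\kappa}\subseteq S_X$ symmetrically $(1+)$-separated of cardinality $\kappa = {\rm dens}\, X$, which is the assertion. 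The main obstacle, such as it is, is purely bookkeeping: correctly phrasing the one-sided biorthogonality so that at stage $\theta$ the functionals $\p_\alpha$ ($\alpha<\theta$) already annihilate $e_\theta$, which we arrange by choosing $e_\theta$ inside their common kernel rather than by choosing $\p_\theta$ afterwards.
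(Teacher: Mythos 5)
Your construction is essentially the paper's proof: the same transfinite induction, choosing at stage $\theta$ an exposed point of the unit ball of $H_\theta=\bigcap_{\alpha<\theta}\ker\p_\alpha$, which is weakly compact by reflexivity and hence has exposed points by Lindenstrauss--Troyanski. One justification needs repair, though: for $\alpha\geq 1$ the functional $\p_\alpha$ is only known to expose $e_\alpha$ in $B_{H_\alpha}$, not in $B_X$ --- it is a Hahn--Banach extension of a functional on the subspace $H_\alpha$, and nothing prevents it from attaining its maximum over all of $B_X$ at further points lying outside $H_\alpha$; so the assertion that ``the only point of $B_X$ with $\langle\p_\alpha,\cdot\rangle$ maximal is $e_\alpha$'' is not justified. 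The argument survives because the vectors in question lie in $H_\alpha$: since $e_\theta\in H_\theta\subseteq H_\alpha$ and $e_\alpha\in H_\alpha$, we have $e_\alpha\pm e_\theta\in H_\alpha$, and exposedness relative to $B_{H_\alpha}$ alone forces $e_\alpha\pm e_\theta\notin B_{H_\alpha}$, whence $\|e_\alpha\pm e_\theta\|>1$; this is exactly how the paper phrases the final step, and it is the one-line correction your write-up needs.
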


\begin{proof}Let $X$ be an infinite-dimensional, reflexive Banach space and set $\lambda={\rm dens}\, X$. According to the result by Lindenstrauss and Troyanski quoted above, the weakly compact set $B_X$ contains an exposed point $x_0$; we can then choose a functional $\p_0\in X^*$ that exposes $x_0$ (and we let $X_0:=X$ for notational consistency). Needless to say, the subspace $X_1:=\ker\p_0\subseteq X_0$ is a reflexive Banach space, whence $B_{X_1}$ is a weakly compact subset of $X$. Consequently, we can choose an exposed point $x_1\in B_{X_1}$ and a functional $\p_1\in X^*$ that exposes $x_1$. We now proceed by transfinite induction. Assume, for some $\beta<\lambda$, to have already found closed subspaces $(X_\alpha)_{\alpha<\beta}$ of $X$, unit vectors $(x_\alpha)_{\alpha<\beta}$ and functionals $(\p_\alpha)_{\alpha<\beta}$ such that for each $\alpha<\beta$:
\begin{romanenumerate}
\item $X_\alpha=\bigcap_{\gamma<\alpha}\ker\p_\gamma$;
\item $x_\alpha\in X_\alpha$ is an exposed point for $B_{X_\alpha}$ in $X$ and the functional $\p_\alpha\in X^*$ exposes $x_\alpha$. 
\end{romanenumerate}

Consider the closed subspace $X_\beta:=\bigcap_{\alpha<\beta}\ker\p_\alpha$. The reflexivity of $X$ implies that 
$w^*\text{-}{\rm dens}\, X^*={\rm dens}\, X=\lambda>\abs{\beta}$, whence the linear span of the family 
$\{\p_\alpha\}_{\alpha<\beta}$ can not be $w^*$-dense in $X^*$. Consequently, 
$\{\p_\alpha\}_{\alpha<\beta}$ does not separate points in $X$ and $X_\beta$ does not reduce to the zero vector. Moreover, $B_{X_\beta}$ is a weakly compact subset of $X$, hence it admits an exposed point $x_\beta$, exposed by $\p_\beta\in X^*$. The fact that $X_\beta\neq\{0\}$ ensures us that $x_\beta$ is a unit vector. This completes the inductive step and shows the existence of families of closed subspaces $(X_\alpha)_{\alpha<\lambda}$ of $X$, unit vectors $(x_\alpha)_{\alpha<\lambda}$ and functionals $(\p_\alpha)_{\alpha<\lambda}$ with the two properties above.

\smallskip
In conclusion, we show that the family $(x_\alpha)_{\alpha<\lambda}$ is symmetrically $(1+)$-separated: if $\alpha<\beta<\lambda$, by construction $x_\beta\in X_\beta\subseteq\ker\p_\alpha$, so $\langle\p_\alpha,x_\alpha\pm x_\beta\rangle=\langle\p_\alpha,x_\alpha\rangle$. Since $\p_\alpha$ exposes $x_\alpha$, this last equality implies that $x_\alpha \pm x_\beta \notin B_{X_\alpha}$. But of course $x_\alpha \pm x_\beta \in X_\alpha$ and consequently $\| x_\alpha \pm x_\beta \| >1$, thus concluding the proof.
\end{proof}

It is perhaps clear that the above reasoning can be adapted to more general spaces, notably spaces with the RNP. However, instead of giving various related results with similar proofs, we prefer to present an abstract version of the present reasoning, which subsumes many concrete examples. We then present concrete consequences of this general theorem.

\begin{definition} An infinite-dimensional Banach space $(X,\n)$ is said to admit the \emph{point with small flatness property} (shortly, \emph{PSF}) if there exist $x\in S_X$ and a closed subspace $Y$ of $X$, with $\dim(X/Y)<\infty$, such that $\|x+y\|>1$ for every unit vector $y\in S_Y$. In symbols,
$$X \text{ has PSF whenever:}\quad \begin{cases}\exists x\in S_X, \exists Y\subseteq X \text{ closed subspace, with}\dim(X/Y)<\infty:\\ \forall y\in S_Y \, \|x+y\|>1.\end{cases}$$
\end{definition}

This notion is inspired from condition (\Square) in the proof of \cite[Theorem A]{HKR-sym-sep}. We are going to see in a moment various examples of spaces with property (PSF), therefore it is perhaps worth including here an example of a Banach space failing this property.
\begin{example} The space $X=c_0(\omega_1)$ is an easy example of a space failing (PSF). Indeed, fix arbitrarily $x\in S_X$ and a finite-codimensional closed subspace $Y$ of $X$. For $\alpha<\omega_1$, we denote by $X_{>\alpha}$ the closed subspace of $X$ consisting of all vectors whose support is contained in $[\alpha+1,\omega_1)$. Of course, there exists an ordinal $\alpha<\omega_1$ such that ${\rm supp}\,x\leq\alpha$ and, moreover, $Y\cap X_{>\alpha}$ is a finite-codimensional subspace of $X_{>\alpha}$. In particular, if we choose $y\in Y\cap X_{>\alpha}$ with $\|y\|=1$, $x$ and $y$ are disjointly supported, whence $\|x+y\|=1$.

Let us also mention that there exists examples of separable Banach spaces that fail (PSF). In particular, it is possible to show that the space $c_0$ does not satisfy property (PSF). The proof of this claim is similar to the one above, together with some linear algebra calculations; therefore, we prefer to omit it. 
\end{example}



Our general result now reads as follows.
\begin{theorem}\label{general th for (1+)} Let $X$ be an infinite-dimensional Banach space such that every infinite-dimensional subspace $\tilde{X}$ of $X$ has (PSF). Then $S_X$ contains a symmetrically $(1+)$-separated subset with cardinality $w^*\text{-}{\rm dens}\, X^*$.
\end{theorem}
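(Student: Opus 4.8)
The plan is to run the same transfinite construction as in the proof of Theorem~\ref{(1+) in reflexive}, with the r\^ole of ``exposed point of a weakly compact ball'' played instead by the point furnished by (PSF). Put $\lambda := w^*\text{-}{\rm dens}\, X^*$. By transfinite recursion I would build a non-increasing chain of closed subspaces $X_0 = X \supseteq X_1 \supseteq \cdots$ (indexed by the ordinals $\leq \lambda$) together with unit vectors $x_\alpha \in S_{X_\alpha}$ for $\alpha < \lambda$, subject to: at a limit ordinal $\beta$ one has $X_\beta = \bigcap_{\alpha < \beta} X_\alpha$; and for every $\alpha < \lambda$, $X_{\alpha+1}$ is a subspace of finite codimension in $X_\alpha$ such that the pair $(x_\alpha, X_{\alpha+1})$ witnesses (PSF) for $X_\alpha$, that is, $\|x_\alpha + y\| > 1$ for every $y \in S_{X_{\alpha+1}}$ (and hence also $\|x_\alpha - y\| > 1$, upon replacing $y$ by $-y$). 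If the recursion reaches $\lambda$, we are done: for $\alpha < \beta < \lambda$ one has $x_\beta \in X_\beta \subseteq X_{\alpha+1}$, so that $\pm x_\beta \in S_{X_{\alpha+1}}$ and therefore $\|x_\alpha \pm x_\beta\| > 1$; in particular the $x_\alpha$ are pairwise distinct, so $\{x_\alpha\}_{\alpha<\lambda}$ is a symmetrically $(1+)$-separated set of cardinality $\lambda$.

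The crux --- and the only genuine obstacle --- is to guarantee that each $X_\beta$ with $\beta < \lambda$ is still \emph{infinite}-dimensional, so that (PSF) may be invoked at stage $\beta$ (recall that by hypothesis every infinite-dimensional subspace of $X$ has this property). At successor stages there is nothing to check, since $X_{\alpha+1}$ has finite codimension in the infinite-dimensional space $X_\alpha$. Limit stages are the delicate point, as an intersection of infinitely many subspaces of finite codimension may collapse. To keep track of the damage, at each successor step I would record finitely many functionals $\psi_\alpha^1, \dots, \psi_\alpha^{n_\alpha} \in X^*$ --- obtained by extending, via Hahn--Banach, functionals on $X_\alpha$ whose common kernel in $X_\alpha$ is $X_{\alpha+1}$ --- so that $X_{\alpha+1} = X_\alpha \cap \bigcap_{i=1}^{n_\alpha} \ker \psi_\alpha^i$. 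An easy induction then gives, at a limit $\beta < \lambda$, the identity $X_\beta = {}^\perp Z_\beta$ with $Z_\beta := {\rm span}\{\psi_\alpha^i \colon \alpha < \beta,\ 1 \leq i \leq n_\alpha\}$, whence $w^*\text{-}{\rm dens}\, Z_\beta \leq \max(|\beta|, \omega) < \lambda$; here we may assume $\lambda$ uncountable, for if $\lambda = \omega$ there are no limit ordinals below $\lambda$ and every finite-stage $X_\beta$ has finite codimension in $X$, hence is infinite-dimensional. It now suffices to invoke the following elementary fact: if $Z \subseteq X^*$ is a linear subspace with $w^*\text{-}{\rm dens}\, Z < w^*\text{-}{\rm dens}\, X^*$, then ${}^\perp Z$ is infinite-dimensional. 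Indeed, were ${}^\perp Z$ finite-dimensional, $\overline{Z}^{\,w^*} = ({}^\perp Z)^\perp$ would be a $w^*$-closed subspace of finite codimension in $X^*$; adjoining to a $w^*$-dense subset of $\overline{Z}^{\,w^*}$ a finite set spanning an algebraic complement in $X^*$ yields $w^*\text{-}{\rm dens}\, X^* \leq w^*\text{-}{\rm dens}\, \overline{Z}^{\,w^*} = w^*\text{-}{\rm dens}\, Z$, a contradiction.

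Granting this, the recursive step is immediate: $X_\beta$ is an infinite-dimensional subspace of $X$, hence has (PSF), which supplies the required $x_\beta \in S_{X_\beta}$ together with the finite-codimensional subspace $X_{\beta+1} \subseteq X_\beta$; this carries the construction through all $\beta < \lambda$ and completes the proof. Two remarks are worth adding. First, Theorem~\ref{(1+) in reflexive} is subsumed here: if $\tilde X$ is an infinite-dimensional subspace of a reflexive space and $x \in S_{\tilde X}$ is an exposed point of $B_{\tilde X}$, exposed by $\p$, then $x$ together with $\tilde X \cap \ker\p$ witnesses (PSF) for $\tilde X$. Second, the very same observation applies whenever nonempty bounded closed convex sets possess exposed points --- in particular in spaces with the RNP, a property inherited by subspaces --- so the theorem covers that class as well.
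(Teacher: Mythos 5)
Your proposal is correct and follows essentially the same route as the paper: a transfinite recursion producing a decreasing chain of finite-codimensionally nested closed subspaces together with (PSF)-witnessing unit vectors, where the only real issue is the survival of limit-stage intersections, settled by observing that such an intersection is the preannihilator of the span of fewer than $w^*\text{-}{\rm dens}\,X^*$ functionals and hence still infinite-dimensional. Your bookkeeping with the Hahn--Banach extensions $\psi_\alpha^i$ and the "elementary fact" about $^{\perp}Z$ is just a slightly more explicit rendering of the paper's condition (iii) and its accompanying footnote, so no substantive difference remains.
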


\begin{proof} Let $\lambda:=w^*\text{-}{\rm dens}\, X^*$. If $\lambda=\omega$, then the result is contained in \cite[Theorem A]{HKR-sym-sep}, so we assume $\lambda>\omega$. We shall construct by transfinite induction a family of unit vectors $(x_\alpha)_{\alpha<\lambda}\subseteq S_X$ and a decreasing family of closed subspaces $(H_\alpha)_{\alpha<\lambda}$ of $X$ such that, for every $\alpha<\lambda$:
\begin{romanenumerate}
\item $\|x_\alpha+y\|>1$ for every $y\in S_{H_\alpha}$;
\item $x_\beta\in H_\alpha$ for $\alpha<\beta<\lambda$;
\item $\dim((\cap_{\beta<\alpha}H_\beta)/H_\alpha)<\infty$ for every $\alpha<\lambda$.
\end{romanenumerate}
Condition (iii) is a technical condition that we need for the inductive procedure to continue until $\lambda$; it implies in particular that $\dim(X/H_0)<\infty$ \footnote{Here, as it is customary, we understand the empty intersection $\cap_{\beta<0}H_\beta$ to be equal to the whole $X$.} and $\dim(H_\alpha/H_{\alpha+1})<\infty$ for each $\alpha<\lambda$.
Once such a family is constructed, for $\alpha<\beta<\lambda$ we have $\pm x_\beta\in H_\alpha$, by (ii). From (i) we conclude that $\|x_\alpha\pm x_\beta\|>1$, whence the family $(x_\alpha)_{\alpha<\lambda}$ is symmetrically $(1+)$-separated.\smallskip

Before entering the construction of those families, we note the following: if condition (iii) is satisfied for every $\alpha<\gamma$, then $\cap_{\alpha<\gamma}H_\alpha$ is the intersection of at most $\max\{|\gamma|,\omega\}$ kernels of functionals from $X$, \emph{i.e.}, $w^*\text{-}{\rm dens}((X/\cap_{\alpha<\gamma}H_\alpha)^*)\leq\max\{|\gamma|,\omega\}$.\smallskip

The proof of this is also based on a simple transfinite induction argument: assuming the statement to be true for every $\alpha<\delta$ (where $\delta<\gamma$), if $\delta=\delta'+1$ is a successor ordinal, then of course $\cap_{\alpha<\delta}H_\alpha=H_{\delta'}$. Consequently, the facts that $\cap_{\alpha<\delta'}H_\alpha$ is the intersection of at most $\max\{|\delta'|,\omega\}$ kernels of functionals and that $\dim((\cap_{\alpha<\delta'}H_\alpha)/H_{\delta'})<\infty$ imply the desired assertion for $\cap_{\alpha<\delta}H_\alpha$. If $\delta$ is a limit ordinal, then $\cap_{\alpha<\delta}H_\alpha=\cap_{\beta<\delta}(\cap_{\alpha<\beta}H_\alpha)$ and each $\cap_{\alpha<\beta}H_\alpha$ is the intersection of at most $\max\{|\beta|,\omega\}\leq |\delta|$ kernels of functionals; hence the same is true for $\cap_{\alpha<\delta}H_\alpha$. \smallskip

We now turn to the construction of the families $(x_\alpha)_{\alpha<\lambda}$ and $(H_\alpha)_{\alpha<\lambda}$ with the desired properties. Assume by transfinite induction to have already found such elements for every $\alpha<\gamma$, where $\gamma<\lambda$. From the above observation, we infer that 
$$w^*\text{-}{\rm dens}\,(X/\bigcap_{\alpha<\gamma}H_\alpha)^*\leq\max\{|\gamma|,\omega\}<\lambda$$
and this readily implies that $w^*\text{-}{\rm dens}((\cap_{\alpha<\gamma}H_\alpha)^*)=\lambda$ \footnote{Indeed, it is a standard fact that, for a closed subspace $Y$ of a Banach space $X$, 
$$w^*\text{-}{\rm dens}\,X^*\leq \max\left\{ w^*\text{-}{\rm dens}\,Y^*, w^*\text{-}{\rm dens}\,(X/Y)^* \right\}.$$
Let us stress that equality may fail to hold, as witnessed by the fact that $\ell_2(\mathfrak{c})$ is a quotient of $\ell_1(\mathfrak{c})$, yet $w^*\text{-}{\rm dens}\,\ell_1(\mathfrak{c})^*=\omega$ and $w^*\text{-}{\rm dens}\,\ell_2(\mathfrak{c})^*=\mathfrak{c}$.}. In particular, $\cap_{\alpha<\gamma}H_\alpha$ (is infinite-dimensional, hence it) has property (PSF) and we can thus find a unit vector $x_\gamma\in\cap_{\alpha<\gamma}H_\alpha$ and a finite-codimensional subspace $H_\gamma$ of $\cap_{\alpha<\gamma}H_\alpha$ such that $\|x_\gamma+y\|>1$ for every $y\in S_{H_\gamma}$. (Note that in the case $\gamma=0$ the above argument just reduces to exploiting the (PSF) property of $X$.) It is then clear that properties (i)--(iii) are satisfied with such a choice; therefore, the transfinite induction step is complete and we are done.
\end{proof}

\begin{definition} Let $X$ be a normed space and $F$ be a non-empty subset of $S_X$. We say that $F$ is a \emph{face} of $B_X$ if there exists a functional $\p\in S_{X^*}$ such that $F=B_X\cap\{\langle\p,\cdot\rangle=1\}$.
\end{definition}
Note that if $S_X$ contains a face $F$ with ${\rm diam}(F)<1$, then $X$ has (PSF). In fact, let $\p\in S_{X^*}$ be such that $F=B_X\cap\{\langle\p,\cdot\rangle=1\}$ and choose any $x\in F$. Then, for every $y\in Y:=\ker\p$ with $\|y\|=1$ we have $\|x+y\|>1$ (otherwise, $x+y\in F$, whence ${\rm diam}(F)\geq\|(x+y)-x\|=1$). Consequently, the following proposition is an immediate consequence of the previous result and the simple fact that if $Y$ is a subspace of $X$, then every face of $B_Y$ is contained in a face of $B_X$.
\begin{proposition} Let $X$ be an infinite-dimensional Banach space. Suppose that, for every infinite-dimensional subspace $Y$ of $X$, the unit ball $B_Y$ contains a face with diameter strictly smaller than $1$. Then the unit sphere of $X$ contains a symmetrically $(1+)$-separated subset with cardinality $w^*\text{-}{\rm dens}\, X^*$.

\smallskip
In particular, the conclusion holds true if every face of $X$ has diameter strictly smaller than $1$.
\end{proposition}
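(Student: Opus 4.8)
The plan is to deduce this Proposition straight from Theorem \ref{general th for (1+)}: it suffices to verify that the stated hypothesis forces \emph{every} infinite-dimensional subspace of $X$ to have property (PSF), and then quote that theorem.

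For the first (main) statement, fix an infinite-dimensional subspace $Y\subseteq X$. By hypothesis $B_Y$ contains a face $F$ with ${\rm diam}(F)<1$; pick $\varphi\in S_{Y^*}$ with $F=B_Y\cap\{\langle\varphi,\cdot\rangle=1\}$ and fix any $x\in F$. Put $Z:=\ker\varphi\subseteq Y$, so $\dim(Y/Z)=1<\infty$. If some $y\in S_Z$ had $\|x+y\|\leq1$, then $x+y\in B_Y$ and $\langle\varphi,x+y\rangle=\langle\varphi,x\rangle=1$, hence $x+y\in F$ and ${\rm diam}(F)\geq\|(x+y)-x\|=\|y\|=1$, a contradiction. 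Thus $\|x+y\|>1$ for all $y\in S_Z$, i.e.\ $Y$ has (PSF). As $Y$ was arbitrary, Theorem \ref{general th for (1+)} yields a symmetrically $(1+)$-separated subset of $S_X$ of cardinality $w^*\text{-}{\rm dens}\,X^*$. (This is exactly the observation recorded right before the statement; I reproduce it only for completeness.)

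For the \emph{in particular} clause it is enough to show that if every face of $B_X$ has diameter $<1$, then the hypothesis of the first part holds for every infinite-dimensional $Y\subseteq X$. First, $B_Y$ does carry a face: choose any $x\in S_Y$ and, by Hahn--Banach, a supporting functional $\psi\in S_{Y^*}$ with $\langle\psi,x\rangle=1$; then $F:=B_Y\cap\{\langle\psi,\cdot\rangle=1\}$ is a non-empty face of $B_Y$. Extend $\psi$ to some $\varphi\in S_{X^*}$ without increasing the norm; then $G:=B_X\cap\{\langle\varphi,\cdot\rangle=1\}$ is a face of $B_X$ (it contains $x$, hence is non-empty), and $F\subseteq G$ since $\varphi$ restricts to $\psi$ on $Y$ — this is the ``simple fact'' alluded to in the text. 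Consequently ${\rm diam}(F)\leq{\rm diam}(G)<1$, so $B_Y$ contains a face of diameter $<1$ and the previous paragraph applies.

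There is no substantial obstacle: all the genuine content sits in Theorem \ref{general th for (1+)}, and the two reductions above use only Hahn--Banach (supporting functionals together with norm-preserving extensions) and the fact that faces are non-empty by definition. The only point deserving a moment's attention is that one must first exhibit \emph{some} face of $B_Y$ before comparing diameters with faces of $B_X$, which is why a supporting functional at an arbitrary unit vector of $Y$ is invoked.
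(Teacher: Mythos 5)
Your argument is correct and coincides with the paper's: you verify that a face of $B_Y$ of diameter less than $1$ yields (PSF) for $Y$ via the kernel of the defining functional, then invoke Theorem \ref{general th for (1+)}, and for the \emph{in particular} clause you use Hahn--Banach to produce a face of $B_Y$ and extend its defining functional so that the face embeds into a face of $B_X$. This is exactly the reduction the paper records just before the statement, including the ``every face of $B_Y$ is contained in a face of $B_X$'' step.
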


Clearly, if $X$ is strictly convex, then every face of $B_X$ is a singleton; we can therefore record the following immediate consequence to the present proposition.
\begin{corollary}\label{strictly convex gives (1+)} If $X$ is an infinite-dimensional strictly convex Banach space, then the unit sphere of $X$ contains a symmetrically $(1+)$-separated subset with cardinality $w^*\text{-}{\rm dens}\, X^*$.
\end{corollary}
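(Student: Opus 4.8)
The plan is to obtain the corollary as an immediate instance of the preceding Proposition, so the whole task reduces to verifying that a strictly convex space meets its hypothesis. First I would record the elementary fact that strict convexity passes to subspaces: the norm of a closed subspace $Y\subseteq X$ is the restriction of $\n$, so if $X$ is strictly convex, then so is $Y$. Hence it suffices to prove that in \emph{any} strictly convex Banach space every face of the unit ball is a singleton; this gives ${\rm diam}(F)=0<1$ for every face $F$, which is exactly what the Proposition asks of $X$ (and of every infinite-dimensional subspace).

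For the claim about faces, let $F=B_X\cap\{\langle\p,\cdot\rangle=1\}$ be a face, witnessed by $\p\in S_{X^*}$, and suppose $x,y\in F$ with $x\neq y$. Then $\langle\p,\tfrac{x+y}{2}\rangle=1$, so $\bigl\|\tfrac{x+y}{2}\bigr\|\geq 1$; since also $\|x\|=\|y\|=1$, the triangle inequality forces $\bigl\|\tfrac{x+y}{2}\bigr\|=1$, contradicting the strict convexity of $X$. Thus $F=\{x\}$. I would also note, for good measure, that every infinite-dimensional subspace $Y$ genuinely has a face in $B_Y$: for any $y_0\in S_Y$, a Hahn--Banach functional $\psi\in S_{Y^*}$ with $\langle\psi,y_0\rangle=1$ produces the face $B_Y\cap\{\langle\psi,\cdot\rangle=1\}$, which by the previous paragraph (applied in the strictly convex space $Y$) is just $\{y_0\}$. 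So the hypothesis of the Proposition holds, and it delivers a symmetrically $(1+)$-separated subset of $S_X$ of cardinality $w^*\text{-}{\rm dens}\,X^*$.

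There is no real obstacle here: the mathematical content is entirely contained in Theorem~\ref{general th for (1+)} and the Proposition derived from it, and this corollary is a purely formal specialisation. The only point meriting a word of care is the non-emptiness of the relevant faces, handled by the Hahn--Banach observation above; once that is in place the argument is complete.
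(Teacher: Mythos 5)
Your proposal is correct and follows exactly the paper's route: the paper likewise observes that strict convexity forces every face of $B_X$ (and of $B_Y$ for any subspace $Y$, since strict convexity is inherited) to be a singleton, and then invokes the preceding Proposition. You merely spell out the standard midpoint argument and the Hahn--Banach non-emptiness check that the paper leaves implicit.
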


Our next application of Theorem \ref{general th for (1+)} leads us back to the dawning of this section, \emph{i.e.}, to exposed points. In fact, if $x\in S_X$ is an exposed point for $B_X$ and $\p\in S_{X^*}$ exposes $x$, then, for every unit vector $y\in Y:=\ker\p$, we have $\langle\p,x+y\rangle=1$, whence $\|x+y\|>1$. Consequently, if the unit ball of $X$ admits an exposed point, $X$ has (PSF).\smallskip

We now need to recall that the result by Lindenstrauss and Troyanski the proof of Theorem \ref{(1+) in reflexive} heavily relied on is in fact consequence of a more general result, due to Phelps (\cite{Phelps}, \emph{cf.} \cite[Theorem 5.17]{BeLi}): if $C$ is a closed, convex and bounded set with the Radon-Nikodym property (RNP) in a Banach space $X$, then $C$ is the closed convex hull of its strongly exposed points. This assertion is truly more general, since every weakly compact convex set has the RNP (\cite[Theorem 5.11.(i)]{BeLi}), while the unit ball of $\ell_1$ is a~simple example of a~set with the RNP that fails to be weakly compact. Therefore, if a~Banach space $X$ has the RNP, the unit ball of every its subspace is the closed convex hull of its strongly exposed points. As a consequence, every infinite-dimensional subspace of $X$ has (PSF) and we infer the validity of the following result. Note that, of course, it contains Theorem \ref{(1+) in reflexive} as a particular case.
\begin{theorem} Let $X$ be an infinite-dimensional Banach space with the Radon--Nikodym property. Then there exists a symmetrically $(1+)$-separated family of unit vectors in $X$, with cardinality $w^*\text{-}{\rm dens}\, X^*$.
\end{theorem}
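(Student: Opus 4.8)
The plan is to reduce the statement to Theorem~\ref{general th for (1+)}: by that theorem it suffices to verify that \emph{every} infinite-dimensional closed subspace $Y$ of $X$ enjoys property (PSF), and then the asserted symmetrically $(1+)$-separated subset of $S_X$, of the stated cardinality $w^*\text{-}{\rm dens}\, X^*$, comes for free. So the whole work consists in locating, inside each such $Y$, a unit vector and a finite-codimensional subspace of $Y$ witnessing (PSF).

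First I would record that the RNP is inherited by closed subspaces: if $Y\subseteq X$ is a closed subspace, then every closed, bounded, convex subset of $Y$ is in particular such a subset of $X$, hence has the RNP; therefore $Y$ has the RNP. In particular this applies to $B_Y$. Next I would invoke Phelps' theorem (quoted just before the statement, \cite{Phelps}; \emph{cf.}~\cite[Theorem~5.17]{BeLi}): since $B_Y$ is a closed, bounded, convex set with the RNP, it is the closed convex hull of its strongly exposed points, so it admits at least one exposed point $x$. As $Y$ is infinite-dimensional, $x$ is necessarily a unit vector; let $\p\in S_{Y^*}$ expose $x$, so that $\langle\p,x\rangle=1$.

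Then I would check, exactly as in the discussion preceding the statement, that $Y$ has (PSF) with witnesses $x$ and $H:=\ker\p$: one has $\dim(Y/H)=1<\infty$, and for every $y\in S_H$ the vector $x+y$ is distinct from $x$ while $\langle\p,x+y\rangle=\langle\p,x\rangle=1$; since $\p$ exposes $x$, no point of $B_Y$ other than $x$ can achieve this value, so $x+y\notin B_Y$, i.e.\ $\|x+y\|>1$. As $Y$ was an arbitrary infinite-dimensional subspace of $X$, Theorem~\ref{general th for (1+)} applies and produces the desired family.

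There is essentially no obstacle here: the only line requiring justification is the hereditarity of the RNP to closed subspaces, and everything else is a direct assembly of the quoted facts (Phelps' theorem and Theorem~\ref{general th for (1+)}). Note that the statement is strictly more general than Theorem~\ref{(1+) in reflexive}, since every weakly compact convex set has the RNP, so reflexive spaces are covered, whereas spaces such as $\ell_1(\Gamma)$ have a unit ball with the RNP that need not be weakly compact.
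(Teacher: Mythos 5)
Your proposal is correct and follows essentially the same route as the paper: the paper likewise deduces the theorem from Theorem~\ref{general th for (1+)} by observing that the RNP passes to closed subspaces, invoking Phelps' theorem to obtain a (strongly) exposed point of the unit ball of each infinite-dimensional subspace, and using the exposing functional's kernel as the witness for (PSF). All the details you supply (the exposed point being a unit vector, the computation $\langle\p,x+y\rangle=\langle\p,x\rangle$ forcing $\|x+y\|>1$) are exactly those the paper relies on.
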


Our last result in this chapter is devoted to duals to G\^ateaux differentiability spaces (we refer to Section \ref{review exposed} for the definition) and exploits $w^*$-exposed points. Note that, if $X$ is a G\^ateaux differentiability space, there exists a functional $f_1\in S_{X^*}$ which is $w^*$-exposed by some $x_1\in S_X$; in particular, for every $g\in \{x_1\}^\perp$ we have $\|f_1\pm g\|>1$. The $w^*$-closed subspace $\{x_1\}^\perp$ of $X^*$ is the dual to $X/{\rm span}\,x_1$, which is a G\^ateaux differentiability space, according to \cite[Proposition 6.8]{Phelps diff}; therefore, we may use a transfinite induction argument completely analogous to those already presented in this section. Let us record the result formally.

\begin{proposition} Let $X$ be a Banach space dual to a G\^ateaux differentiability space. Then the unit sphere of $X$ contains a symmetrically $(1+)$-separated subset with cardinality $w^*\text{-}{\rm dens}\, X^*$.
\end{proposition}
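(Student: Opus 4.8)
The plan is to mimic the transfinite induction from the proof of Theorem~\ref{general th for (1+)}, but carried out directly on the dual space $X^*$ rather than abstracting through a (PSF)-type property. The key observation, already sketched in the paragraph preceding the statement, is that $w^*$-exposed points in $B_{X^*}$ play the r\^ole that exposed points played in the reflexive case, and that the class of G\^ateaux differentiability spaces is stable under the relevant quotients. So I would set $\lambda:=w^*\text{-}{\rm dens}\,X^*$; if $\lambda=\omega$ we are again in the separable situation covered by \cite[Theorem A]{HKR-sym-sep}, so assume $\lambda>\omega$. I would then construct by transfinite induction a family of unit functionals $(f_\alpha)_{\alpha<\lambda}\subseteq S_{X^*}$, a family of unit vectors $(x_\alpha)_{\alpha<\lambda}\subseteq S_X$, and a decreasing family of $w^*$-closed subspaces $(G_\alpha)_{\alpha<\lambda}$ of $X^*$, where at stage $\alpha$ the functional $f_\alpha\in G_\alpha$ is $w^*$-exposed (as a point of $B_{G_\alpha}$) by $x_\alpha$, and $G_{\alpha+1}$ is taken to be $\{x_\alpha\}^\perp\cap G_\alpha$, with $G_\gamma:=\bigcap_{\alpha<\gamma}G_\alpha$ at limit stages. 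The point is that each $G_\alpha$ so produced is the annihilator in $X^*$ of the subspace $Z_\alpha:=\overline{{\rm span}}\{x_\beta\}_{\beta<\alpha}\subseteq X$, hence $G_\alpha=(X/Z_\alpha)^*$, and $X/Z_\alpha$ is a G\^ateaux differentiability space by \cite[Proposition~6.8]{Phelps diff} since $Z_\alpha$ is separable (it is the closed span of $|\alpha|<\lambda$ vectors, but more to the point, quotients by separable subspaces of G\^ateaux differentiability spaces are again such) --- actually one only needs the quotient-stability statement, and $X/Z_\alpha$ is the increasing-union/limit of quotients by finitely generated subspaces, so a limiting argument or the proposition applied directly gives the conclusion.

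Granting this, the induction step reads as follows: having built $f_\beta,x_\beta$ for $\beta<\gamma$, form $G_\gamma=\bigcap_{\beta<\gamma}\{x_\beta\}^\perp$. This is nonzero: indeed $w^*\text{-}{\rm dens}(X/Z_\gamma)^*\leq\max\{|\gamma|,\omega\}<\lambda$, so $X/Z_\gamma$ is a nontrivial (in fact nonseparable, hence infinite-dimensional) G\^ateaux differentiability space, and therefore admits a point of G\^ateaux differentiability of its norm, which by the dictionary recalled in Section~\ref{review exposed} yields a functional $f_\gamma\in S_{G_\gamma}$ that is $w^*$-exposed in $B_{G_\gamma}$ by some $x_\gamma\in S_{X/Z_\gamma}$. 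Lifting $x_\gamma$ to a unit vector of $X$ (any norm-one representative works, since $f_\gamma$ annihilates $Z_\gamma$ so $\langle f_\gamma,x_\gamma\rangle$ is unchanged, and one may choose the representative of norm exactly $1$ after a harmless perturbation, or simply observe $\|x_\gamma\|_X=1$ is attainable because $f_\gamma$ attains its norm at $x_\gamma$) completes the step. The separation is then immediate exactly as in Theorem~\ref{general th for (1+)}: for $\alpha<\beta<\lambda$ we have $x_\beta\in Z_\beta$, but wait --- the symmetry is witnessed on the dual side, so the correct statement is that $f_\beta\in G_\beta\subseteq\{x_\alpha\}^\perp$, whence $\langle f_\alpha, f_\alpha\pm f_\beta\rangle=\langle f_\alpha,f_\alpha\rangle=1$ while $f_\alpha\pm f_\beta$ lies in $G_\alpha$ and, since $f_\alpha$ is $w^*$-exposed by $x_\alpha$ with $\langle x_\alpha, f_\alpha\pm f_\beta\rangle=1=\langle x_\alpha,f_\alpha\rangle$, it follows that $f_\alpha\pm f_\beta\notin B_{G_\alpha}$, i.e. $\|f_\alpha\pm f_\beta\|>1$. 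Thus $(f_\alpha)_{\alpha<\lambda}$ is the desired symmetrically $(1+)$-separated subset of $S_{X^*}$, and this has cardinality $\lambda=w^*\text{-}{\rm dens}\,X^*$.

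The step I expect to be the main obstacle is the bookkeeping that keeps the quotient spaces $X/Z_\gamma$ within the class of G\^ateaux differentiability spaces at \emph{limit} stages: \cite[Proposition~6.8]{Phelps diff} presumably handles quotients by a single line (or by finite-dimensional, or by separable, subspaces), and one must check that the relevant stability also survives taking closed spans of transfinitely many vectors, or else reorganize the induction so that only countably generated quotients ever appear --- e.g. by noting that it suffices to find \emph{one} new $w^*$-exposed functional in $G_\gamma$, and $G_\gamma$ is itself a dual space $(X/Z_\gamma)^*$ whose predual is a G\^ateaux differentiability space as long as that stability holds. A clean way around this is to invoke only the special case that $G_\gamma\neq\{0\}$ (forced by the $w^*$-density count) together with the fact that $G_\gamma=(X/Z_\gamma)^*$ and the relevant quotient stability; if \cite{Phelps diff} only states it for separable or finite-codimensional subspaces, one argues that a nonseparable G\^ateaux differentiability space has a separable quotient that is still one, reducing to that case --- but in fact the cleanest route, and the one I would ultimately take, is simply to cite the quotient-stability of G\^ateaux differentiability spaces in the generality needed (it does hold for arbitrary closed subspaces of the \emph{range}, equivalently arbitrary subspaces one quotients by, since the property is inherited by quotients quite generally), so that no special case analysis is required. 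The rest of the argument is a verbatim transcription of the $w^*$-density counting and annihilator manipulations from the proof of Theorem~\ref{general th for (1+)} and presents no new difficulty.
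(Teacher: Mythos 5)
Your proposal is correct and takes essentially the same route as the paper: the paper's argument is precisely the transfinite induction on the quotients $X/\overline{{\rm span}}\{x_\beta\}_{\beta<\gamma}$ of the predual, producing at each stage a $w^*$-exposed functional from a point of G\^ateaux differentiability of the quotient norm and invoking the quotient-stability of G\^ateaux differentiability spaces via \cite[Proposition 6.8]{Phelps diff} (which does hold for quotients by arbitrary closed subspaces, so your concern about limit stages resolves exactly as you anticipated). The only blemishes are typographical slips, not gaps: at the nontriviality step the bound $\max\{|\gamma|,\omega\}$ should be placed on $w^*\text{-}{\rm dens}\,Z_\gamma^*$ rather than on $w^*\text{-}{\rm dens}\,(X/Z_\gamma)^*$ before applying the subadditivity of $w^*$-density as in the proof of Theorem \ref{general th for (1+)}, and the pairing $\langle f_\alpha,f_\alpha\pm f_\beta\rangle$ should read $\langle f_\alpha\pm f_\beta,x_\alpha\rangle$, as you in fact write immediately afterwards.
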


\subsection{$(1+\e)$-separation}
In this part we shall present results parallel to those of the previous section, but concerning the existence of large symmetrically $(1+\e)$-separated families. It is clear that one could formulate a uniform analogue to condition (PSF) and adapt the arguments to be presented in this section to deduce an analogue to Theorem \ref{general th for (1+)}. However, we shall not pursue this direction and we shall restrict ourselves to the consideration of some classes of Banach spaces.\smallskip

In the first result we shall exploit the full power of the notion of strongly exposed point in order to treat spaces with the RNP. We start with the following simple observation.
\begin{lemma}\label{Lemma: strong exp gives (1+e)} Let $X$ be a Banach space and $x\in B_X$ be a strongly exposed point of $B_X$; also let $\p \in X^*$ be a strongly exposing functional for $x$. Then 
$$\inf \{ \|x+v\|:v\in \ker \p, \|v\|=1 \} >1.$$
\end{lemma}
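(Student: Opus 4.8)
The plan is to argue by contradiction, exploiting the defining property of a strongly exposing functional: not merely that $\varphi$ attains its supremum over $B_X$ uniquely at $x$, but that sequences on which $\varphi$ nearly attains its value must converge to $x$ in norm. First I would normalise: since $x$ is a strongly exposed point and $\varphi$ a strongly exposing functional, after rescaling we may assume $\|\varphi\|=1$, so that $\langle\varphi,x\rangle=\sup_{B_X}\operatorname{Re}\langle\varphi,\cdot\rangle=1$ (the supremum equals $\|\varphi\|\|x\|=1$ because $x\in B_X$ and $\varphi$ exposes it; note $\|x\|=1$ as $x$ is an extreme point of $B_X$).

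Suppose, for contradiction, that the infimum in question equals $1$ (it is automatically $\geq 1$, since for $v\in\ker\varphi$ we have $\langle\varphi,x+v\rangle=1$, forcing $\|x+v\|\geq 1$). Then there is a sequence $(v_n)$ in $\ker\varphi$ with $\|v_n\|=1$ and $\|x+v_n\|\to 1$. Set $y_n:=\dfrac{x+v_n}{\|x+v_n\|}\in B_X$. Then
$$
\langle\varphi,y_n\rangle=\frac{\langle\varphi,x+v_n\rangle}{\|x+v_n\|}=\frac{1}{\|x+v_n\|}\longrightarrow 1=\langle\varphi,x\rangle,
$$
so by the strong exposedness of $x$ we get $y_n\to x$ in norm. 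But then $v_n=\|x+v_n\|\,y_n-x\to 1\cdot x-x=0$, contradicting $\|v_n\|=1$. Hence the infimum is strictly greater than $1$.

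I expect no serious obstacle here; the only point requiring a little care is the reduction to $\|\varphi\|=1$ together with the observation $\langle\varphi,x\rangle=1$ and $\|x\|=1$, and the harmless fact that $\|x+v_n\|$ stays bounded away from $0$ (indeed $\|x+v_n\|\to 1$) so that dividing by it is legitimate. Everything else is the direct translation of the definition of a strongly exposing functional recalled in Section~\ref{review exposed}.
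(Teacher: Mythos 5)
Your proof is correct and follows essentially the same route as the paper's: both assume the infimum equals $1$, normalise the vectors $x+v_n$ to get a sequence $y_n\in B_X$ with $\langle\p,y_n\rangle\to\langle\p,x\rangle$, invoke strong exposedness to conclude $y_n\to x$, and derive a contradiction with $\|v_n\|=1$ (the paper phrases the final contradiction as $\|y_n-x\|\to1$ rather than $v_n\to0$, which is the same computation). The preliminary normalisation $\|\p\|=1$ is harmless and the rest is identical in substance.
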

\begin{proof} Note preliminarily that the above infimum is necessarily greater or equal to 1. In fact, $\p$ exposes $x$, so for every non-zero $v \in\ker \p$ we have $\|x+v\|>1$. If by contradiction the conclusion of the lemma is false, we may find a sequence of unit vectors $(v_n)_{n=1}^\infty$ in $\ker\p$  such that $\|x+v_n\|\rightarrow 1$. The vectors $r_n:=\frac{x+v_n}{\|x+v_n\|}\in B_X$ then satisfy $\langle\p,r_n\rangle=\frac{\langle\p,x\rangle}{\|x+v_n\|}\rightarrow\langle\p,x\rangle$; consequently, our assumption that $x$ is strongly exposed by $\p$ allows us to conclude that $r_n\rightarrow x$. This is however an absurdity, since
$$\|r_n-x\| \geq \|x+v_n -x\| - \|r_n -(x+v_n)\| = 1 - \|x+v_n\| \cdot \left|{\frac{1}{\|x+v_n\|} -1}\right| \rightarrow 1.$$
\end{proof}

\begin{theorem}\label{Thm: (1+e) for RNP} Let $X$ be an infinite-dimensional Banach space with the RNP and let $\kappa\leq w^*\text{-}{\rm dens}\, X^*$ be a cardinal number with uncountable cofinality. Then, for some $\e>0$, the unit sphere of $X$ contains a symmetrically $(1+\e)$-separated subset, with cardinality $\kappa$.
\end{theorem}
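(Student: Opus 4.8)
The plan is to mimic the transfinite induction from the proof of Theorem~\ref{general th for (1+)}, but using the full strength of strongly exposed points (via Lemma~\ref{Lemma: strong exp gives (1+e)}) together with the RNP to control a single $\e>0$ uniformly along a cofinal chain. Since $X$ has the RNP, every infinite-dimensional subspace $Y$ of $X$ has the property that $B_Y$ is the closed convex hull of its strongly exposed points; in particular $B_Y$ has a strongly exposed point. So at each stage $\gamma<\kappa$ of the induction I would have a decreasing chain of closed, finite-codimensional-in-the-previous-intersection subspaces $(H_\alpha)_{\alpha<\gamma}$ (exactly as in Theorem~\ref{general th for (1+)}, so that $w^*\text{-}{\rm dens}\,(\cap_{\alpha<\gamma}H_\alpha)^*=\lambda\geq\kappa>|\gamma|$ guarantees the intersection is infinite-dimensional), pick a strongly exposed point $x_\gamma$ of $B_{\cap_{\alpha<\gamma}H_\alpha}$ with strongly exposing functional $\p_\gamma$, and set $H_\gamma:=\ker\p_\gamma\cap\bigl(\cap_{\alpha<\gamma}H_\alpha\bigr)$. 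By Lemma~\ref{Lemma: strong exp gives (1+e)} there is $\e_\gamma>0$ with $\|x_\gamma+v\|\geq 1+\e_\gamma$ for every $v\in S_{H_\gamma}$, hence (since $x_\beta\in H_\gamma$ for $\beta>\gamma$, up to normalising the difference of two of the later vectors — in fact $\pm x_\beta\in S_{H_\gamma}$ already) $\|x_\gamma\pm x_\beta\|\geq 1+\e_\gamma$ for all $\beta>\gamma$.

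The main obstacle is that the $\e_\gamma$ produced at stage $\gamma$ may degenerate to $0$ as $\gamma$ grows, so a priori one only gets a family that is ``$(1+)$-separated with the gap shrinking'', not a genuinely $(1+\e)$-separated one for a fixed $\e$. This is exactly where the uncountable cofinality of $\kappa$ enters. After running the induction to length $\kappa$, I would partition the index set: for each $n\in\N$ let $Z_n:=\{\gamma<\kappa\colon \e_\gamma\geq 1/n\}$. Then $\kappa=\bigcup_{n}Z_n$, and since $\mathrm{cf}(\kappa)>\omega$ the cardinal $\kappa$ is not a countable union of sets of smaller cardinality, so some $Z_n$ has cardinality $\kappa$. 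Fix such an $n$ and put $\e:=1/n$. For any two distinct $\gamma,\beta\in Z_n$, say $\gamma<\beta$, we have $\|x_\gamma\pm x_\beta\|\geq 1+\e_\gamma\geq 1+\e$; hence $\{x_\gamma\}_{\gamma\in Z_n}$ is a symmetrically $(1+\e)$-separated subset of $S_X$ of cardinality $\kappa$, as desired.

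A couple of routine points I would spell out but not belabour: first, the bookkeeping that $\dim\bigl((\cap_{\alpha<\gamma}H_\alpha)/H_\gamma\bigr)<\infty$ at each step (it is one extra kernel relative to the previous intersection) and the consequent computation, identical to the footnotes in the proof of Theorem~\ref{general th for (1+)}, that $w^*\text{-}{\rm dens}\,\bigl(\cap_{\alpha<\gamma}H_\alpha\bigr)^*=\lambda$ whenever $|\gamma|<\lambda$, which keeps the intersection infinite-dimensional so that the RNP of the subspace can be invoked; second, the trivial observation that a strongly exposing functional for $B_Y$ (with $Y\subseteq \cap_{\alpha<\gamma}H_\alpha$, here $Y$ being that intersection itself) can be taken in $X^*$ by Hahn--Banach and its kernel intersected back down, so all the $H_\alpha$ remain subspaces of $X$ defined as intersections of kernels of functionals on $X$. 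Neither of these requires anything beyond what was already done for $(1+)$-separation; the only genuinely new ingredient is Lemma~\ref{Lemma: strong exp gives (1+e)} plus the cofinality pigeonhole.
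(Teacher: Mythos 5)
Your proposal is correct and follows essentially the same route as the paper: the same transfinite induction through a decreasing chain of kernels, choosing at each stage a strongly exposed point of the unit ball of the current intersection (available because the RNP passes to closed subspaces), invoking Lemma \ref{Lemma: strong exp gives (1+e)} to get a stage-dependent $\e_\gamma>0$, and then using the uncountable cofinality of $\kappa$ to extract a single $n$ with $|\{\gamma\colon\e_\gamma\geq 1/n\}|=\kappa$. The paper's proof is exactly this argument, with the same pigeonhole at the end.
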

\begin{proof} The argument follows a pattern similar to the proofs of the previous section, therefore we only sketch it. Let $\lambda:=w^*\text{-}{\rm dens}\, X^*$; a transfinite induction argument as in the proof of Theorem \ref{(1+) in reflexive} shows the existence of families of closed subspaces $(X_\alpha)_{\alpha<\lambda}$ of $X$, unit vectors $(x_\alpha)_{\alpha<\lambda}$ and functionals $(\p_\alpha)_{\alpha<\lambda}$ with the following properties, for every $\alpha<\lambda$:
\begin{romanenumerate}
\item $X_\alpha=\bigcap_{\gamma<\alpha}\ker\p_\gamma$;
\item $x_\alpha\in X_\alpha$ is a strongly exposed point for $B_{X_\alpha}$ in $X_\alpha$, strongly exposed by $\p_\alpha$. 
\end{romanenumerate}
According to Lemma \ref{Lemma: strong exp gives (1+e)}, we can also find, for each $\alpha<\lambda$, a real $\e_\alpha>0$ such that $\|x_\alpha+v\|\geq1+\e_\alpha$ for every unit vector $v\in\ker\p_\alpha\cap X_\alpha$. In particular, for every $\alpha<\beta<\lambda$ we have $\pm x_\beta\in\ker\p_\alpha\cap X_\alpha$, whence $\|x_\alpha\pm x_\beta\|\geq1+\e_\alpha$.\smallskip

We finally exploit the cofinality of $\kappa$ to conclude the proof. Of course, the union of the sets $\Gamma_n:=\{\alpha<\kappa\colon\e_\alpha\geq 1/n\}$ covers $\kappa$, whence the uncountable cofinality of $\kappa$ implies the existence of $n_0$ such that $|\Gamma_{n_0}|=\kappa$. Consequently, for any $\alpha,\beta\in\Gamma_{n_0}, \alpha<\beta$, we have
$$\|x_\alpha\pm x_\beta\|\geq1+\e_\alpha\geq1+ 1/n_0.$$
Therefore, the family $\{x_\alpha\}_{\alpha\in\Gamma_{n_0}}$ has cardinality $\kappa$ and it is symmetrically $(1+1/n_0)$-separated, which concludes the proof.
\end{proof}

Plainly, reflexive Banach spaces have the RNP and satisfy ${\rm dens}\, X=w^*\text{-}{\rm dens}\, X^*$. Therefore, the following corollary is a particular case of the previous theorem; note that, by the considerations at the beginning of Section \ref{Section: geometry (1+)}, it implies (ii) in Theorem B.
\begin{corollary}\label{(1+e) in reflexive} Suppose that $X$ is an infinite-dimensional reflexive Banach space. Let $\kappa\leq{\rm dens}\, X$ be a cardinal number with uncountable cofinality. Then there exists a symmetrically $(1+\e)$-separated family of unit vectors in $S_X$, with cardinality $\kappa$.
\end{corollary}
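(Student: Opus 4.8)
The plan is to deduce this corollary directly from Theorem \ref{Thm: (1+e) for RNP}. First I would record the two standard facts that match the hypotheses: (a)~every reflexive Banach space has the Radon--Nikodym property, since its closed unit ball is weakly compact and every weakly compact convex set has the RNP (\cite{BeLi}); and (b)~for a reflexive space $X$ one has $w^*\text{-}{\rm dens}\, X^* = {\rm dens}\, X$, because on the dual of a reflexive space the weak$^*$ and weak topologies coincide, weak and norm density characters agree (Mazur), and ${\rm dens}\, X^* = {\rm dens}\, X$ when $X$ is reflexive (apply the inequality ${\rm dens}\, Z \leq {\rm dens}\, Z^*$ both to $Z=X$ and to $Z=X^*$, using $X^{**}=X$). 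Granting (a) and (b), the assumption $\kappa \leq {\rm dens}\, X$ reads $\kappa \leq w^*\text{-}{\rm dens}\, X^*$, and since $\kappa$ moreover has uncountable cofinality, Theorem \ref{Thm: (1+e) for RNP} applies verbatim and produces $\e>0$ together with a symmetrically $(1+\e)$-separated subset of $S_X$ of cardinality $\kappa$. There is no genuine obstacle here: the corollary is a deliberate specialisation, and the only thing to verify is the elementary identity $w^*\text{-}{\rm dens}\, X^* = {\rm dens}\, X$.

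If instead one wanted a self-contained argument not invoking Theorem \ref{Thm: (1+e) for RNP}, one would simply replay its transfinite induction inside $X$. Starting from $X_0=X$, at stage $\alpha$ set $X_\alpha=\bigcap_{\gamma<\alpha}\ker\p_\gamma$; reflexivity makes $B_{X_\alpha}$ weakly compact, so by the Lindenstrauss--Troyanski theorem recalled in Section~\ref{review exposed} it has a strongly exposed point $x_\alpha\in S_{X_\alpha}$, strongly exposed by some $\p_\alpha\in X^*$. Lemma \ref{Lemma: strong exp gives (1+e)} then yields $\e_\alpha>0$ with $\|x_\alpha+v\|\geq1+\e_\alpha$ for every unit $v\in\ker\p_\alpha\cap X_\alpha$. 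The induction runs through all $\alpha<{\rm dens}\, X$ exactly as in Theorem \ref{(1+) in reflexive}: the linear span of $\{\p_\gamma\}_{\gamma<\alpha}$ has cardinality at most $\max\{|\alpha|,\omega\}<{\rm dens}\, X=w^*\text{-}{\rm dens}\, X^*$, hence is not $w^*$-dense and $X_\alpha\neq\{0\}$. For $\alpha<\beta<{\rm dens}\, X$ one has $\pm x_\beta\in X_\beta\subseteq\ker\p_\alpha\cap X_\alpha$, so $\|x_\alpha\pm x_\beta\|\geq1+\e_\alpha$.

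The final step, which is the only one requiring a line of care, is the passage from the (possibly non-uniform) estimates $\e_\alpha$ to a single $\e$: writing $\Gamma_n=\{\alpha<\kappa\colon\e_\alpha\geq1/n\}$, one uses $\bigcup_{n}\Gamma_n=\kappa$ together with the uncountable cofinality of $\kappa$ to find $n_0$ with $|\Gamma_{n_0}|=\kappa$; then $\{x_\alpha\}_{\alpha\in\Gamma_{n_0}}$ is symmetrically $(1+1/n_0)$-separated and has cardinality $\kappa$. This is exactly where uncountable cofinality is indispensable, since a cardinal of countable cofinality such as $\omega_\omega$ may genuinely fail to carry a uniform separation estimate, as the $\ell_2$-sum of the spaces $\ell_{p_n}(\omega_n)$ discussed after Theorem~B demonstrates.
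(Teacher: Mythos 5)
Your proposal is correct and follows exactly the paper's route: the paper likewise obtains this corollary as an immediate specialisation of Theorem \ref{Thm: (1+e) for RNP}, noting only that reflexive spaces have the RNP and satisfy ${\rm dens}\, X = w^*\text{-}{\rm dens}\, X^*$. Your self-contained second paragraph simply unwinds the proof of that theorem (including the cofinality step with the sets $\Gamma_n$), so nothing further is needed.
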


We next give the $(1+\e)$-separation analogue to Corollary \ref{strictly convex gives (1+)}; as it is to be expected, we need to assume a uniform analogue to strict convexity. Let us recall that a norm $\n$ on a Banach space $X$ is \emph{locally uniformly rotund} (hereinafter, \emph{LUR}) if, for every $x\in S_X$ and every sequence $(x_n)_{n=1}^\infty$ in $S_X$, the condition $\|x_n+x\|\to2$ implies $x_n\to x$. It is very easy to verify the standard fact that if the norm of $X$ is LUR, then every point of $S_X$ is a~strongly exposed point for $B_X$.\smallskip

Consequently, by following the same pattern as in the proof of Theorem \ref{Thm: (1+e) for RNP}, we may conclude the following result.
\begin{proposition} Let $X$ be an infinite-dimensional Banach space and let $\kappa\leq w^*\text{-}{\rm dens}\,X^*$ be a cardinal number with ${\rm cf}(\kappa)$ uncountable. If $X$ is LUR, then for some $\e>0$ the unit sphere of $X$ contains a symmetrically $(1+\e)$-separated subset, with cardinality $\kappa$.
\end{proposition}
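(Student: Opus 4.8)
The plan is to transcribe, almost verbatim, the argument used in the proof of Theorem~\ref{Thm: (1+e) for RNP}, replacing the appeal to the Radon--Nikodym property (strongly exposed points arising from Phelps' theorem) by the cheaper fact recalled just above the statement: in an LUR space \emph{every} point of the unit sphere is a strongly exposed point of the ball. The crucial supplementary remark is that the LUR property passes to every closed subspace, since its defining condition only quantifies over sequences, and a distinguished point, lying in the subspace itself; consequently every subspace we shall meet is again LUR, and the RNP of $X$ never enters.

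Set $\lambda:=w^*\text{-}{\rm dens}\, X^*$. Since ${\rm cf}(\kappa)$ is uncountable we have $\kappa\geq\omega_1$, hence $\lambda\geq\omega_1$, so we stay in the transfinite regime throughout. By transfinite recursion, exactly as in the proof of Theorem~\ref{(1+) in reflexive}, I would build a decreasing chain of closed subspaces $(X_\alpha)_{\alpha<\lambda}$, unit vectors $x_\alpha$, and functionals $\p_\alpha\in S_{X^*}$ with $X_\alpha=\bigcap_{\gamma<\alpha}\ker\p_\gamma$ and such that $x_\alpha\in S_{X_\alpha}$ is strongly exposed in $B_{X_\alpha}$ by $\p_\alpha|_{X_\alpha}$. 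The recursion does not terminate before $\lambda$: at stage $\alpha$ the family $\{\p_\gamma\}_{\gamma<\alpha}$ has cardinality $<\lambda$, hence its linear span is not $w^*$-dense in $X^*$, so it fails to separate points and $X_\alpha\neq\{0\}$; one then picks any $x_\alpha\in S_{X_\alpha}$, lets $\p_\alpha\in S_{X_\alpha^*}$ norm it (by LUR of $X_\alpha$ this functional strongly exposes $x_\alpha$ in $B_{X_\alpha}$), and extends $\p_\alpha$ to $X$ by Hahn--Banach without increasing the norm, noting that the kernel of the extension meets $X_\alpha$ in $\ker(\p_\alpha|_{X_\alpha})$. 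Applying Lemma~\ref{Lemma: strong exp gives (1+e)} \emph{inside} $X_\alpha$ yields $\e_\alpha>0$ with $\|x_\alpha+v\|\geq1+\e_\alpha$ for every unit vector $v\in X_\alpha\cap\ker\p_\alpha$.

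Now fix $\alpha<\beta<\lambda$. Since $\beta>\alpha$ we have $X_\beta\subseteq X_\alpha$ and $X_\beta\subseteq\ker\p_\alpha$, so $\pm x_\beta$ are unit vectors of $X_\alpha\cap\ker\p_\alpha$ and therefore $\|x_\alpha\pm x_\beta\|\geq1+\e_\alpha$. It then remains to run the pigeonhole step from the end of the proof of Theorem~\ref{Thm: (1+e) for RNP}: restrict the indices to $\alpha<\kappa$, observe that the increasing sets $\Gamma_n:=\{\alpha<\kappa:\e_\alpha\geq 1/n\}$ cover $\kappa$, and use that ${\rm cf}(\kappa)>\omega$ --- equivalently, that $\kappa$ is not a countable union of sets of smaller cardinality --- to produce $n_0$ with $|\Gamma_{n_0}|=\kappa$. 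Then $\{x_\alpha\}_{\alpha\in\Gamma_{n_0}}$ is a symmetrically $(1+1/n_0)$-separated family of unit vectors of cardinality $\kappa$, as required, with $\e:=1/n_0$.

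As for difficulties, there are essentially none beyond bookkeeping: the whole proof is a direct specialisation of Theorem~\ref{Thm: (1+e) for RNP}. The only two points that deserve a moment's care --- and which I would make explicit --- are that strong exposedness must be read relative to $B_{X_\alpha}$ (so that Lemma~\ref{Lemma: strong exp gives (1+e)} applies there), which is precisely why one wants LUR to descend to subspaces, and that the recursion survives to $\lambda$, which is guaranteed by the bound $w^*\text{-}{\rm dens}\, X^*=\lambda$ on how many functionals can be annihilated. Since the paper explicitly says it will follow the same pattern, the write-up can reasonably be kept to a few lines by referring back to Theorem~\ref{Thm: (1+e) for RNP}.
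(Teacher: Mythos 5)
Your proposal is correct and is exactly the argument the paper intends: the paper itself only sketches this proposition by noting that LUR makes every unit vector strongly exposed and then referring back to the pattern of Theorem \ref{Thm: (1+e) for RNP}, which is what you carry out, including the correct observations that LUR is inherited by closed subspaces and that Lemma \ref{Lemma: strong exp gives (1+e)} must be applied inside each $X_\alpha$. The cofinality pigeonhole step at the end also matches the paper's.
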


\begin{remark} It follows from a well-known renorming result, due to Troyanski and Zizler (\cite{Troyanski,Zizler}, also see \cite[Section 7.1]{DGZ}), that every Banach space with a projectional skeleton admits a LUR renorming; we shall not recall the definition of a projectional skeleton here and we refer the reader to \cite{Kubis} or \cite[Corollary 17.5]{KKLP} for information on this topic. As a consequence, when we combine this result with our previous proposition, we obtain the existence of a renorming whose unit sphere contains a large symmetrically $(1+\e)$-separated family, with no need to exploit biorthogonal systems, as we made in Section \ref{Section: Auerbach}.
\end{remark}

The last result for this section is dedicated to asymptotically uniformly convex Banach spaces (\emph{cf.} Section \ref{review AUC}); the study of separated sequences in those spaces was undertaken by Delpech, \cite{delpech}. Our result will, in particular, provide a non-separable counterpart to Deplech's contribution.

\begin{theorem} Let $X$ be an infinite-dimensional Banach space and let $d<1+\overline{\delta}_X(1)$. Then the unit sphere of $X$ contains a symmetrically $d$-separated family with cardinality equal to $w^*\text{-}{\rm dens}\, X^*$.
\end{theorem}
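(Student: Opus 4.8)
The plan is to mimic the transfinite induction scheme from the proofs of Theorem \ref{(1+) in reflexive} and Theorem \ref{general th for (1+)}, with the modulus $\auc_X$ playing the role previously played by the existence of an exposed point or by property (PSF). Set $\lambda:=w^*\text{-}{\rm dens}\, X^*$ and $t:=1$; since $d<1+\auc_X(1)$, we may fix $\e>0$ with $d\leq 1+\auc_X(1)-\e$. The key local fact is that, by the very definition of the modulus of asymptotic uniform convexity, for \emph{every} unit vector $x$ of an infinite-dimensional Banach space $Z$ there is a finite-codimensional closed subspace $H\subseteq Z$ with $\|x+h\|\geq 1+\auc_Z(1)-\e\geq 1+\auc_X(1)-\e\geq d$ for all $h\in H$ with $\|h\|\geq 1$; here the inequality $\auc_Z\geq\auc_X$ comes from Fact \ref{Fact: AUC modulus of subspace} applied to the subspace $Z$ of $X$ (note that $\auc_Z(1,x)\geq\auc_Z(1)\geq\auc_X(1)$, so we can even work at the fixed point $x$, choosing $H$ to nearly realise the supremum defining $\auc_Z(1,x)$).

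With this observation in hand, I would run the same induction as in Theorem \ref{general th for (1+)}: build unit vectors $(x_\alpha)_{\alpha<\lambda}$ and a decreasing transfinite chain of closed subspaces $(H_\alpha)_{\alpha<\lambda}$ such that for every $\alpha<\lambda$ one has (i) $\|x_\alpha+h\|\geq d$ for every $h\in H_\alpha$ with $\|h\|\geq 1$; (ii) $x_\beta\in H_\alpha$ whenever $\alpha<\beta<\lambda$; and (iii) $\dim\big((\cap_{\beta<\alpha}H_\beta)/H_\alpha\big)<\infty$. At a stage $\gamma<\lambda$, the intersection $\cap_{\alpha<\gamma}H_\alpha$ is, by condition (iii) and the same bookkeeping lemma used in the proof of Theorem \ref{general th for (1+)}, cut out from $X$ by at most $\max\{|\gamma|,\omega\}<\lambda$ functionals, hence it satisfies $w^*\text{-}{\rm dens}\big((\cap_{\alpha<\gamma}H_\alpha)^*\big)=\lambda$; in particular it is infinite-dimensional, so we may pick a unit vector $x_\gamma$ in it and apply the local fact above inside $Z:=\cap_{\alpha<\gamma}H_\alpha$ to produce a finite-codimensional $H_\gamma\subseteq Z$ with property (i). Conditions (ii) and (iii) then hold automatically, and the induction carries through to $\lambda$.

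Finally, for $\alpha<\beta<\lambda$ we have $\pm x_\beta\in H_\alpha$ by (ii), and $\|\pm x_\beta\|=1\geq 1$, so property (i) at stage $\alpha$ gives $\|x_\alpha\pm x_\beta\|\geq d$. Strictly speaking (i) yields $\geq d$, i.e. symmetric $d$-separation in the sense of the weak inequality; to land on the stated conclusion note $d<1+\auc_X(1)$ leaves slack, so replacing $d$ by any $d'\in(d,1+\auc_X(1))$ in the construction gives $\|x_\alpha\pm x_\beta\|\geq d'>d$, hence the family $(x_\alpha)_{\alpha<\lambda}$ is symmetrically $d$-separated with cardinality $\lambda=w^*\text{-}{\rm dens}\, X^*$, as required.

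The routine part is the transfinite bookkeeping, which is verbatim from Theorem \ref{general th for (1+)}. The only genuine point requiring care — and the main obstacle — is the interface between the modulus $\auc$ and the subspace passage: one must check that at stage $\gamma$ the subspace $H_\gamma$ can be chosen \emph{finite-codimensional in $\cap_{\alpha<\gamma}H_\alpha$} while still realising (nearly) the value $\auc_X(1)$ uniformly over $\{h\in H_\gamma:\|h\|\geq 1\}$; this is exactly what Fact \ref{Fact: AUC modulus of subspace} guarantees, since it shows $\auc_{Z}\geq\auc_X$ so the supremum over finite-codimensional subspaces of $Z$ defining $\auc_Z(1,x_\gamma)$ is at least $\auc_X(1)$, and an approximating $H$ in that supremum is by definition finite-codimensional in $Z$ — and $Z$ is itself finite-codimensional in $\cap_{\beta<\gamma}H_\beta$ only at successor stages, but condition (iii) was stated relative to $\cap_{\beta<\alpha}H_\beta$ precisely to accommodate limit stages, so no difficulty arises.
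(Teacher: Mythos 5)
Your proposal is correct and follows essentially the same route as the paper: a transfinite induction producing a decreasing chain of finite-codimensional subspaces, using the definition of $\auc_X(1,x)$ together with Fact \ref{Fact: AUC modulus of subspace} to find, at each stage, a finite-codimensional $H_\gamma$ on whose unit vectors $h$ one has $\|x_\gamma+h\|\geq d$, and the cardinality count on kernels to keep the intersections infinite-dimensional. (The final adjustment replacing $d$ by $d'$ is harmless but unnecessary, since the paper's notion of symmetric $d$-separation only requires the non-strict inequality $\|x\pm y\|\geq d$.)
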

We should perhaps mention that the result is of interest only for asymptotically uniformly convex Banach spaces, or, more generally, whenever $\overline{\delta}_X(1)>0$. In fact, for $d<1$, the existence of a symmetrically $d$-separated subset of $S_X$, with cardinality ${\rm dens}\, X$, is an~immediate consequence of Riesz' lemma.

\begin{proof} Let $\lambda:=w^*\text{-}{\rm dens}\, X^*$. We construct by transfinite induction a long sequence of unit vectors $(x_\alpha)_{\alpha<\lambda}\subseteq S_X$ and a decreasing long sequence $(H_\alpha)_{\alpha<\lambda}$ of infinite-dimensional subspaces of $X$ with $\dim(H_\alpha/H_{\alpha+1})<\infty$, $\dim(X/H_1)<\infty$ and such that:
\begin{romanenumerate}
\item $\|x_\alpha+h\|\geq d$, for each $h\in S_{H_\alpha}$ and $\alpha<\lambda$;
\item $x_\beta\in H_\alpha$, for $\alpha<\beta<\lambda$.
\end{romanenumerate}
It immediately follows that, for $\alpha<\beta$, $\pm x_\beta\in H_\alpha$, whence $\|x_\alpha\pm x_\beta\|\geq d$ and we are done.

We start with an arbitrary $x_0\in S_X$; since $d-1<\auc_X(1,x_0)$ there exists a finite-codimensional subspace $H_0$ of $X$ such that $\inf_{h\in H_0, \|h\|\geq1}\|x_0+h\|\geq d$. In particular, $\|x_0+h\|\geq d$ for every $h\in S_{H_0}$. We now choose arbitrarily $x_1\in S_{H_0}$ and we proceed by transfinite induction. Assuming to have already found $(x_\alpha)_{\alpha<\gamma}$ and $(H_\alpha)_{\alpha<\gamma}$ for some $\gamma<\lambda$ we consider two cases: if $\gamma=\tilde{\gamma}+1$ is a successor ordinal, we can choose arbitrarily $x_\gamma\in S_{H_{\tilde{\gamma}}}$. According to Fact \ref{Fact: AUC modulus of subspace}, we have $d-1<\auc_X(1)\leq \auc_{H_{\tilde{\gamma}}}(1)\leq\auc_{H_{\tilde{\gamma}}}(1,x_\gamma)$; consequently, we may find a finite-codimensional subspace $H_\gamma$ of $H_{\tilde{\gamma}}$ such that $\|x_\gamma +h\|\geq d$, for every $h\in S_{H_\gamma}$.\smallskip

In the case that $\gamma$ is a limit ordinal, we first note that $\cap_{\alpha<\gamma} H_\alpha$ is infinite-dimensional. In fact, each $H_{\alpha+1}$ is the intersection of $H_\alpha$ with the kernels of finitely many functionals; therefore, $\cap_{\alpha<\gamma} H_\alpha$ is the intersection of the kernels of at most $|\gamma|<\lambda$ functionals. Consequently, the minimal cardinality of a family of functionals that separates points in $\cap_{\alpha<\gamma}H_\alpha$ is $\lambda$ and, in particular, $\cap_{\alpha<\gamma} H_\alpha$ is infinite-dimensional. We can therefore choose a norm-one $x_\gamma\in\cap_{\alpha<\gamma}H_\alpha$ and, arguing as above, we find $H_\gamma\subseteq\cap_{\alpha<\gamma}H_\alpha$ such that $\dim(\cap_{\alpha<\gamma} H_\alpha/H_\gamma)<\infty$ and $\|x_\alpha+h\|\geq d$ for $h\in S_{H_\gamma}$. This completes the inductive step and consequently the proof.
\end{proof}

If we combine the above theorem with the inequality $\delta_X(1)\leq\overline{\delta}_X(1)$, we arrive at the following particular case to claim (iii) in Theorem B.
\begin{corollary} Let $X$ be a uniformly convex Banach space. Then for every $\e>0$, the unit sphere of $X$ contains a symmetrically $(1+\delta_X(1)-\e)$-separated family with cardinality ${\rm dens}\, X$.
\end{corollary}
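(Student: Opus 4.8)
The plan is to read this off from the preceding theorem on asymptotically uniformly convex spaces, using only two elementary facts already recorded. First, I would note that a uniformly convex Banach space is reflexive (by the Milman--Pettis theorem), so that $w^*\text{-}{\rm dens}\, X^*={\rm dens}\, X$; hence any symmetrically separated family of cardinality $w^*\text{-}{\rm dens}\, X^*$ furnished by that theorem automatically has cardinality ${\rm dens}\, X$. As usual we tacitly assume $X$ to be infinite-dimensional, the theorem being stated in that generality.

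Second, I would invoke the inequality $\delta_X(t)\leq\overline{\delta}_X(t)$ for $t\in[0,1]$ recalled in Section~\ref{review AUC}, applied at $t=1$. Fix $\e>0$ and put $d:=1+\delta_X(1)-\e$. Then
$$d=1+\delta_X(1)-\e\leq 1+\overline{\delta}_X(1)-\e<1+\overline{\delta}_X(1),$$
so the hypothesis $d<1+\overline{\delta}_X(1)$ of the preceding theorem is satisfied. Applying it produces a symmetrically $d$-separated subset of $S_X$ of cardinality $w^*\text{-}{\rm dens}\, X^*={\rm dens}\, X$, which is precisely the assertion.

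There is no genuine obstacle here: the corollary is essentially a one-line specialisation of the theorem above. The only point worth a comment is that when $\e$ is large one may have $d<1$, in which case the statement is weaker than symmetric $(1+)$-separation and is in any event covered by Riesz's lemma, as the remark following the theorem observes; conversely, since uniform convexity forces $\delta_X(1)>0$, the corollary does carry content for small $\e$, yielding symmetric $(1+\delta)$-separation of full density with the explicit value $\delta=\delta_X(1)-\e>0$.
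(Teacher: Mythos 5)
Your proof is correct and follows exactly the route the paper intends: the corollary is obtained by combining the preceding theorem with the inequality $\delta_X(1)\leq\overline{\delta}_X(1)$, and your additional remark that reflexivity (via Milman--Pettis) gives $w^*\text{-}{\rm dens}\,X^*={\rm dens}\,X$ simply makes explicit a step the paper leaves tacit.
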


\subsection{Injections into $\ell_p(\Gamma)$}\label{injections lp(gamma)}
By a result of Lindenstrauss (\cite{lin}), every non-separable reflexive Banach space has a projectional resolution of the identity (PRI). The following idea--already exploited to some extent in the proof of \cite[Theorem 3.5]{KaKo}--is essentially due to Benyamini and Starbird (\cite[p.~139]{benyaministarbird}) who, using the Gurarii--James inequality (\cite{james}), noticed that if $X$ is super-reflexive, then for any $\e>0$ and every PRI $(P_\alpha)_{\omega\leqslant \alpha\leqslant\lambda}$ in $X$, where $\lambda = {\rm dens}\, X$, there exists $p\in (1,\infty)$ such that the operator
\begin{equation}\label{operator}T\colon X \;\longrightarrow\;\; \Biggr( \bigoplus_{\omega\leqslant \alpha<\lambda} (P_{\alpha+1}-P_\alpha)(X) \Biggr)_{\!\!\ell_p\left([\omega, \lambda) \right)}\end{equation}
given by
$$Tx = \bigl(P_{\alpha+1}x - P_{\alpha}x\bigr)_{\omega\leqslant \alpha<\lambda}\qquad (x\in X)$$
has norm at most $2+\e$. Let us record here the following fact.
\begin{proposition}Let $X$ be a super-reflexive space $X$ and let $\lambda = {\rm dens}\, X$. Then for every $\e>0$ there exist $p\in(1,\infty)$, a closed subspace $Y$ of $X$ with ${\rm dens}(Y)=\lambda$, and a~linear injection $S\colon Y\to \ell_p(\lambda)$ with $\|S\|\leqslant 2+\varepsilon$ that maps some family of unit vectors $(y_\alpha)_{\alpha<\lambda}$ in $Y$ onto the unit vector basis of $\ell_p(\lambda)$.\end{proposition}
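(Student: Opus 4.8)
The plan is to feed a carefully chosen projectional resolution of the identity into the Benyamini--Starbird operator \eqref{operator} and then to collapse each block of the resulting $\ell_p$-sum onto a line. I would fix $\e>0$ and assume, as one may, that $X$ is non-separable, so $\lambda={\rm dens}\,X>\omega$. By Lindenstrauss' theorem $X$ admits a PRI, and by the standard refinement of his construction---which, for a reflexive space, yields a PRI whose increments are separable; see, \emph{e.g.}, \cite{HMVZ} or \cite{FHHMZ}---I would fix a PRI $(P_\alpha)_{\omega\leq\alpha\leq\lambda}$ on $X$ all of whose increments $E_\alpha:=(P_{\alpha+1}-P_\alpha)(X)$ ($\omega\leq\alpha<\lambda$) are \emph{separable}. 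Applying the Benyamini--Starbird estimate to this particular PRI then furnishes $p\in(1,\infty)$ such that the operator $T\colon X\to Z:=\bigl(\bigoplus_{\omega\leq\alpha<\lambda}E_\alpha\bigr)_{\ell_p([\omega,\lambda))}$, $Tx=\bigl((P_{\alpha+1}-P_\alpha)x\bigr)_\alpha$, satisfies $\|T\|\leq 2+\e$.

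Next I would extract two facts. First, from the PRI axioms one has $X=\overline{P_\omega X+{\rm span}\bigcup_\alpha E_\alpha}$, and since $P_\omega X$ and each $E_\alpha$ is separable this gives ${\rm dens}\,X\leq\omega+|A|$, where $A:=\{\alpha\in[\omega,\lambda)\colon E_\alpha\neq\{0\}\}$; hence $|A|=\lambda$. Second, a routine transfinite induction---using $P_{\beta+1}x=P_\beta x+(P_{\beta+1}-P_\beta)x$ at successor steps and $P_\mu x=\lim_{\beta\nearrow\mu}P_\beta x$ at limit steps---shows that $Tx=0$ and $P_\omega x=0$ together force $x=0$; thus $T$ is injective on $\ker P_\omega$. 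I would then choose, for each $\alpha\in A$, a unit vector $u_\alpha\in E_\alpha$ and a functional $f_\alpha\in S_{E_\alpha^*}$ with $\langle f_\alpha,u_\alpha\rangle=1$, and set $Y:=\overline{{\rm span}}\{u_\alpha\colon\alpha\in A\}$. Since $E_\alpha\subseteq\ker P_\omega$ we have $Y\subseteq\ker P_\omega$, so $T|_Y$ is injective; and since $P_{\alpha+1}-P_\alpha$ is a projection of norm at most $2$ that fixes $u_\alpha$ and annihilates every $u_\beta$ with $\beta\neq\alpha$, the set $\{u_\alpha\}_{\alpha\in A}$ is $\tfrac12$-separated, whence ${\rm dens}\,Y=|A|=\lambda$.

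Finally, I would define $R\colon Z\to\ell_p(A)$ by $R\bigl((z_\alpha)_\alpha\bigr):=\bigl(\langle f_\alpha,z_\alpha\rangle\bigr)_\alpha$; the inequality $\sum_\alpha|\langle f_\alpha,z_\alpha\rangle|^p\leq\sum_\alpha\|z_\alpha\|^p$ yields $\|R\|\leq 1$, while on the ``diagonal'' closed subspace $\overline{{\rm span}}\{Tu_\alpha\colon\alpha\in A\}$ of $Z$---which contains $T(Y)$, since $Tu_\alpha$ is the vector of $Z$ concentrated on the single coordinate $\alpha$, where it equals $u_\alpha$ (products of distinct consecutive-difference projections vanish)---the map $R$ is an isometry onto $\ell_p(A)$. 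Putting $S:=R\circ T|_Y\colon Y\to\ell_p(A)\cong\ell_p(\lambda)$ we get $\|S\|\leq\|R\|\,\|T\|\leq 2+\e$; $S$ is injective, being the composition of the injective maps $T|_Y$ and $R|_{T(Y)}$; and $Su_\alpha=e_\alpha$ for every $\alpha\in A$. Reindexing $(u_\alpha)_{\alpha\in A}$ as $(y_\alpha)_{\alpha<\lambda}$ then completes the argument.

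The step I expect to be the crux is the first one: an \emph{arbitrary} PRI may have far fewer than $\lambda$ non-zero increments---for instance $X=\ell_2(\omega_\omega)$ with $P_{\omega_n}$ the coordinate projection onto $\ell_2(\omega_n)$---so it is genuinely necessary to start from a PRI with separable increments rather than from the bare Lindenstrauss resolution. Everything after that is essentially bookkeeping, the only mild subtlety being the observation that $T(Y)$ lies inside the diagonal subspace of $Z$, which is exactly where the globally non-injective averaging map $R$ restricts to an isometry.
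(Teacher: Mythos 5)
Your argument is correct and follows essentially the same route as the paper's: a PRI, the Benyamini--Starbird operator \eqref{operator}, and one unit vector chosen in each increment, with your operator $R$ merely making explicit the isometric identification of the diagonal subspace with $\ell_p(\lambda)$ that the paper asserts directly. The one substantive point where you go beyond the paper is the insistence on a PRI with separable increments so that $\lambda$ of them are non-zero: the paper's proof silently assumes every increment is non-trivial, and your concern is genuine when ${\rm dens}\,X$ is a limit cardinal of countable cofinality (your specific example is not literally a PRI, since $P_{\omega_n}$ as the projection onto $\ell_2(\omega_n)$ violates the continuity condition at the limit ordinals $\omega_n$, but the variant with $P_\alpha$ equal to the projection onto $\ell_2(\omega_n)$ for $\omega_n<\alpha\leq\omega_{n+1}$ is a PRI on $\ell_2(\omega_\omega)$ with only countably many non-zero increments, so the refinement is genuinely needed); the only thing you elide is the separable case, which is not literally ``without loss of generality'' but does follow easily from the Gurarii--James inequality applied to a monotone basic sequence.
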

\begin{proof}The case where $X$ is separable follows from the Gurarii--James inequality almost directly. Let us consider the case where $X$ is non-separable.\smallskip

Fix $\varepsilon > 0$ and a PRI $(P_\alpha)_{\omega\leqslant \alpha\leqslant\lambda}$ in $X$. Take the corresponding operator $T$ given by \eqref{operator}. For every $\alpha \in [\omega, \lambda)$ we pick a unit vector $y_\alpha$ in $(P_{\alpha+1}-P_\alpha)(X)$. Then $\|Ty_\alpha\|=1$ and the closed linear span in the codomain of $T$ of the set $\{Ty_\alpha\colon \alpha \in [\omega, \lambda)\}$ is isometric to $\ell_p(\lambda)$. Consequently, $Y$ being the closed linear span of $\{y_\alpha\colon \alpha \in [\omega, \lambda)\}$ is the desired subspace of $X$ and for $S$ we simply take the restriction of $T$ to $Y$.
\end{proof}

\begin{remark}As proved by the first-named author (\cite{hajek}), the hypothesis of super-reflexivity cannot be relaxed to mere reflexivity of the space $X$.\end{remark}

Consider the net $\mathscr{L}$ comprising subsets of $\lambda$ whose complements have cardinality less than $\lambda$, ordered with the reversed inclusion. In a Banach space $X$ fix a family of unit vectors $\{y_\beta\colon \beta <\lambda\}$. For every set $\Lambda\subset \lambda$ we define $X_\Lambda$ to be the closed linear span of the set $\{y_\beta\colon \beta \in \Lambda\}$. We say that an operator $S\colon X\to \ell_p(\lambda)$ is \emph{bounded by a pair} $(\varpi, \varrho)$, when $\|S\|\leqslant \varrho$ and $\|S|_{X_\Lambda}\|\geqslant \varpi$ for every $\Lambda\in \mathscr{L}$.

\begin{proposition}\label{lowerq}Let $X$ be a Banach space and let $\lambda = {\rm dens}\, X$. Suppose that there exists a bounded linear injection $S\colon X\to \ell_p(\lambda)$ that maps some collection $(x_\alpha)_{\alpha<\lambda}$ of unit vectors in $X$ onto the standard unit vector basis of $\ell_p(\lambda)$. Then for every $\varepsilon >0$, $S_{X}$ contains a~symmetrically $(2^{1/p}-\e)$-separated subset of cardinality $\lambda$.\end{proposition}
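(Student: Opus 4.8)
\emph{Proof plan.} The plan is to construct, by transfinite recursion of length $\lambda$, a family of unit vectors of $X$ whose images under $S$ are pairwise disjointly supported in $\ell_p(\lambda)$ and are as long as possible in the $\ell_p$-norm. The governing quantity for the separation of such a family is not $\|S\|$ but the ``asymptotic norm'' $\varpi:=\inf_{\Lambda\in\mathscr{L}}\|S|_{X_\Lambda}\|$; since $\|Sx_\alpha\|_p=\|x_\alpha\|=1$ for every $\alpha<\lambda$, and $X_\Lambda\subseteq X_{\Lambda'}$ whenever $\Lambda\subseteq\Lambda'$, we have $1\le\varpi\le\|S\|<\infty$, i.e.\ $S$ is bounded by the pair $(\varpi,\|S\|)$. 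The key preliminary move — the one that makes the exponent $2^{1/p}$ sharp, and the one most tempting to skip — is to exploit the definition of $\varpi$ as an infimum: fixing $0<\e<2^{1/p}$ and an auxiliary $\e'>0$ (to be pinned down at the end), pick $\Lambda_*\in\mathscr{L}$ with $\|S|_{X_{\Lambda_*}}\|<\varpi+\e'$. We carry out the whole construction inside $X_{\Lambda_*}$, noting that $\|S|_{X_\Lambda}\|\ge\varpi$ still holds for every $\Lambda\in\mathscr{L}$ contained in $\Lambda_*$.

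Now run the recursion. At stage $\gamma<\lambda$ put $\Lambda_\gamma:=\Lambda_*\setminus\bigcup_{\delta<\gamma}{\rm supp}\,(Su_\delta)$. As $\bigcup_{\delta<\gamma}{\rm supp}\,(Su_\delta)$ is a union of fewer than $\lambda$ finite sets it has cardinality $<\lambda$, so $\lambda\setminus\Lambda_\gamma$ — being the union of $\lambda\setminus\Lambda_*$ and a set of cardinality $<\lambda$ — has cardinality $<\lambda={\rm dens}\,X$, whence $\Lambda_\gamma\in\mathscr{L}$ and so $\|S|_{X_{\Lambda_\gamma}}\|\ge\varpi$. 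Using density of the finitely supported vectors in $X_{\Lambda_\gamma}$ together with a normalisation, choose a unit vector $u_\gamma\in S_{X_{\Lambda_\gamma}}$ which is a finite linear combination of $x_\alpha$'s with $\alpha\in\Lambda_\gamma$ and satisfies $\|Su_\gamma\|_p>\varpi-\eta$, for a fixed small $\eta>0$. Then $Su_\gamma$ is finitely supported inside $\Lambda_\gamma$, so $Su_\gamma$ and $Su_\delta$ have disjoint supports whenever $\gamma\ne\delta$, and the recursion plainly continues through all $\lambda$ stages.

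Finally, for distinct $\gamma,\delta<\lambda$, disjointness of supports in $\ell_p(\lambda)$ gives $\|S(u_\gamma\pm u_\delta)\|_p=\bigl(\|Su_\gamma\|_p^p+\|Su_\delta\|_p^p\bigr)^{1/p}>2^{1/p}(\varpi-\eta)$, whereas $u_\gamma\pm u_\delta\in X_{\Lambda_*}$ yields $\|u_\gamma\pm u_\delta\|\ge\|S(u_\gamma\pm u_\delta)\|_p/\|S|_{X_{\Lambda_*}}\|>2^{1/p}(\varpi-\eta)/(\varpi+\e')$. Because $\varpi\ge1$, choosing $\eta$ and $\e'$ small enough (in terms of $\e$ and $p$ only) makes the right-hand side exceed $2^{1/p}-\e$; hence $\{u_\gamma:\gamma<\lambda\}$ is a symmetrically $(2^{1/p}-\e)$-separated subset of $S_X$ of cardinality $\lambda$, as desired.

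The main obstacle is conceptual rather than computational: aiming for $\|Su\|_p$ close to $\|S\|$ is doomed, since $\|S|_{X_\Lambda}\|$ may decrease strictly as $\Lambda$ shrinks — one must instead target the stable value $\varpi$ and first pass to the subspace $X_{\Lambda_*}$ on which $\|S\|$ is essentially $\varpi$ too, so that the trivial estimate $\|v\|\ge\|Sv\|_p/\|S|_{X_{\Lambda_*}}\|$ becomes nearly tight. Everything else — the cardinal bookkeeping that keeps the recursion alive through all $\lambda$ stages, and the scalar optimisation over $\eta$ and $\e'$ — is routine.
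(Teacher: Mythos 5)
Your proof is correct and follows essentially the same route as the paper's: pass to a tail subspace $X_{\Lambda_*}$ on which $\|S\|$ is within $\e'$ of the stable infimum $\varpi=\inf_{\Lambda\in\mathscr L}\|S|_{X_\Lambda}\|$, then build by transfinite recursion finitely and disjointly supported images of norm close to $\varpi$, and conclude via the $\ell_p$ disjoint-support identity. The cardinality bookkeeping and the final optimisation over $\eta,\e'$ (using $\varpi\geq1$) are exactly as in the paper, so there is nothing to add.
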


\begin{proof}For every $\Lambda \subset \lambda$, we set $\varrho_\Lambda=\|S\vert_{X_\Lambda}\|$. As $\|Sy_\alpha\|=\|y_\alpha\|=1$ ($\alpha<\lambda$), we have $\varrho_\Lambda \geqslant 1$. The net $(\varrho_\Lambda)_{\Lambda\in \mathscr{L}}$ is then non-increasing (let us recall that $\mathscr{L}$ is endowed with the reversed inclusion). For every $\Lambda \in \mathscr{L}$, the restriction operator $S|_{X_\Lambda}$ is bounded by the pair $$(\inf_{\Upsilon \in  \mathscr{L}}\varrho_\Upsilon,\varrho_\Lambda).$$ Moreover, $\varrho_\Lambda\to \inf_{\Upsilon \in  \mathscr{L}}\varrho_\Upsilon$ as $\Lambda \searrow \varnothing$. We may thus always replace $X$ with $X_\Lambda$ for a~small enough set $\Lambda \in \mathscr L$ and assume that $S\colon X\rightarrow\ell_p(\lambda)$ is bounded by a pair $(\varpi,\varrho)$ with $\frac{\varpi}{\varrho}$ as close to $1$ as we wish (in which case $\frac{\varpi}{\varrho}<1$).\smallskip

Take $\tilde{\varpi} \in (0,\varpi)$. Since $\|S\| >\tilde{\varpi}$, we may find a unit vector $y_1$ in $X_\lambda$ such that $\|Sy_1\| > \tilde{\varpi}$; up to a small perturbation, we may also assume that $Sy_1$ is finitely supported. Take a non-zero $\alpha<\lambda$ and assume that we have already found unit vectors $y_\beta$ ($\beta<\alpha$) in $X_\lambda$ such that
\begin{romanenumerate}
\item $\|Sy_\beta\| > \tilde{\varpi}$ ($\beta<\alpha$),
\item ${\rm supp}\,Sy_\beta$ is a finite subset of $\lambda$ ($\beta<\alpha$), and
\item ${\rm supp}\, Sy_{\beta_1}\cap\, {\rm supp}\, Sy_{\beta_2}=\varnothing$ for distinct $\beta_1,\beta_2<\alpha$.
\end{romanenumerate}
According to (ii), the set $$\Lambda_\alpha = \bigcup_{\beta<\alpha}{\rm supp}\, Sy_{\beta}$$ has cardinality $|\Lambda_\alpha|<\lambda$, that is, $\lambda\setminus \Lambda_\alpha\in \mathscr L$. Consequently, we may find a unit vector $y_\alpha\in X_{\lambda \setminus \Lambda_\alpha}$ such that ${\rm supp}\, Sy_{\alpha}$ is a finite subset of $\lambda$, disjoint from $\Lambda_\alpha$ and $\|S y_\alpha\| > \tilde{\varpi}$.\smallskip

As the vectors $(Sy_\alpha)_{\alpha<\lambda}$ are pairwise disjointly supported, for distinct $\beta_1,\beta_2<\lambda$ we have

$$\varrho \cdot \|y_{\beta_1} \pm y_{\beta_2}\| \geq \| Sy_{\beta_1} \pm Sy_{\beta_2}\| = \left( \|Sy_{\beta_1}\|^p + \|Sy_{\beta_2}\|^p \right)^{1/p} \geq \tilde{\varpi} \cdot 2^{1/p}.
$$
Since $\frac{\tilde{\varpi}}{\varrho}$ may be chosen to be as close to $1$ as we wish, the proof is finished.
\end{proof}

To conclude, it is plain that the conjunction of the two propositions presented in this section implies clause (iii) in Theorem B.

\section{A renorming of $c_0(\omega_1)$ with no uncountable Auerbach systems}\label{No uncountable Auerbach}
The main result of this section is the construction of a WLD Banach space with density character $\omega_1$ that contains no uncountable Auerbach system. In particular, we wish to construct a renorming of the real space $c_0(\omega_1)$ such that in this new norm the space fails to contain uncountable Auerbach systems. This is indeed possible, at least under the assumption of the Continuum Hypothesis.
\begin{theorem}[CH] There exists a renorming $\nn\cdot$ of the real Banach space $c_0(\omega_1)$ such that the space $(c_0(\omega_1),\nn\cdot)$ contains no uncountable Auerbach systems.
\end{theorem}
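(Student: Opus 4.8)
The plan is to construct the renorming explicitly via a dual ball that is a subset of $B_{\ell_1(\omega_1)}$, designed so that exact biorthogonality cannot be achieved uncountably often. The underlying intuition is that an Auerbach system $\{x_\gamma; \varphi_\gamma\}_{\gamma < \omega_1}$ needs, for each $\gamma$, a norming functional $\varphi_\gamma$ that vanishes on \emph{all} $x_\eta$ with $\eta \ne \gamma$; if the new norm is engineered so that every norm-attaining functional is forced to be ``close to'' a coordinate functional supported on a prescribed countable set, then the supports of the $\varphi_\gamma$'s and the (essential) supports of the $x_\gamma$'s interlock, and a counting/pressing-down argument on $\omega_1$ produces the contradiction. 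First I would fix the coordinate basis $(e_\alpha)_{\alpha < \omega_1}$ of $c_0(\omega_1)$ and its coordinate functionals $(e_\alpha^*)$, and define $\nn\cdot$ by specifying its dual unit ball $B := \overline{\mathrm{conv}}^{w^*}(\{\pm e_\alpha^* : \alpha < \omega_1\} \cup \mathcal{A})$ where $\mathcal{A} \subseteq B_{\ell_1(\omega_1)}$ is a carefully chosen family of ``extra'' functionals with countable support; the point of $\mathcal{A}$ is to perturb the geometry enough that no uncountable exactly-biorthogonal system survives, while keeping $\nn\cdot$ equivalent to $\n_\infty$ (so $B$ must contain a fixed multiple of $B_{c_0(\omega_1)^*}$'s ball, i.e., we keep $\sup$-type control).

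The key steps, in order. (1) Use CH to enumerate in an $\omega_1$-sequence all the ``potential data'' we must defeat: by CH there are only $\omega_1$ many countable subsets of $\omega_1$, and — with a suitable coding — only $\omega_1$ many candidates for the relevant countably-supported vectors and functionals with rational or otherwise coded coefficients that could appear in an uncountable Auerbach system after a standard $\Delta$-system / stabilization reduction. (2) Perform a transfinite recursion of length $\omega_1$ building $\mathcal{A} = \{a_\xi\}_{\xi < \omega_1}$ so that each $a_\xi$ ``kills'' the $\xi$-th candidate configuration, meaning: if $\{x_\gamma;\varphi_\gamma\}$ were an uncountable Auerbach system, then after passing to an uncountable subset and applying the $\Delta$-system lemma (Lemma in Section~\ref{Review combinatory}) to the supports (both of the $x_\gamma$'s in $c_0(\omega_1)$ and of the $\varphi_\gamma$'s as measures on $\omega_1$), we stabilize the root $\Delta$ and the ``pattern'' of the configuration, so the stabilized configuration is one of the coded candidates, and the corresponding $a_\xi \in \mathcal{A}$ yields $|\langle \varphi_\gamma, x_\eta\rangle| + |\langle a_\xi, \text{something}\rangle|$-type inequality contradicting $\langle \varphi_\gamma, x_\eta\rangle = 0$. (3) Check that the resulting $\nn\cdot$ is an equivalent norm on $c_0(\omega_1)$ (bounds $\n_\infty \le \nn\cdot \le C\n_\infty$ come from $\pm e_\alpha^* \in B$ and $\mathcal{A} \subseteq B_{\ell_1}$, or a bounded subset thereof) and that $(c_0(\omega_1), \nn\cdot)$ is still WLD (immediate, as $w^*$-density of the dual is $\omega_1$ and $B$ is a subset of a product of countable-support points, so its dual ball remains Corson — one must verify the Corson property is preserved, which follows since each point of $B$ has countable support by construction of $\mathcal{A}$).

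The main obstacle, which I expect to occupy the bulk of the section, is step (2): making precise \emph{what} finite/countable ``configuration data'' needs to be coded so that (a) there are only continuum-many (hence, under CH, only $\omega_1$-many) of them, and (b) killing each one genuinely obstructs \emph{every} uncountable Auerbach system, not just those in some special form. The subtlety is that an Auerbach system is unbounded a priori in the sense that the supports of the $x_\gamma$ can grow and overlap wildly; the $\Delta$-system lemma controls finite supports but the $\varphi_\gamma$ live in $\ell_1(\omega_1)$ with countable (not finite) support, so one must first truncate each $\varphi_\gamma$ to an $\e$-approximating finitely-supported functional — losing exact biorthogonality but retaining enough (roughly, $|\langle\varphi_\gamma, x_\eta\rangle|$ small) — and then argue the approximate biorthogonality is still impossible. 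Getting the quantitative bookkeeping right (how the constant $C$ in $\nn\cdot \le C\n_\infty$, the truncation error $\e$, and the ``killing gap'' provided by each $a_\xi$ fit together so that no contradiction-avoiding slack remains) is the delicate heart of the construction, and is presumably where the norm must be designed with foresight rather than chosen generically. A secondary technical point is ensuring the recursion can always continue: at stage $\xi$ we need the new $a_\xi$ to be compatible with (not wreck) the earlier $a_\eta$'s and to still leave $\nn\cdot$ equivalent to $\n_\infty$; since each $a_\xi$ is a single norm-one element of $\ell_1(\omega_1)$ with countable support, adding it to the $w^*$-closed convex hull changes the norm only on a ``small'' set of directions, so this should go through, but it must be checked.
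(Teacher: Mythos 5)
Your high-level skeleton (CH to enumerate $\mathfrak c=\omega_1$ many objects, a transfinite recursion choosing ``defeating'' data, the $\Delta$-system lemma to stabilise supports) matches the paper's, but the proposal is missing the mechanism that actually produces a contradiction, and the one concrete mechanism you do sketch would fail. The fatal step is your plan to truncate each $\varphi_\gamma\in\ell_1(\omega_1)$ to a finitely supported approximant and then rule out \emph{approximate} biorthogonality. Approximate biorthogonality can never be ruled out: for \emph{any} equivalent norm on $c_0(\omega_1)$ the canonical system $\{e_\alpha/\nn{e_\alpha};\nn{e_\alpha}e_\alpha^*\}_{\alpha<\omega_1}$ is an exactly biorthogonal, uniformly bounded system, so any argument that only uses ``$|\langle\varphi_\gamma,x_\eta\rangle|$ small'' plus norm estimates with quantitative slack must apply to it and cannot reach a contradiction. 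The obstruction has to exploit \emph{exact} vanishing, and the paper's construction is engineered precisely so that exactness survives all reductions: the new norm is $\nn{x}=\sup_\alpha|\langle\p_\alpha,x\rangle|$ where each $\p_\alpha$ is $e_\alpha^*$ plus a geometric tail $(\lambda_\alpha)^k$ spread over \emph{all} coordinates below $\alpha$ (not a convex hull enlarged by extra functionals $\mathcal A$, as you propose --- with your definition you would separately have to show that the canonical basis itself does not remain an uncountable Auerbach system). The paper then proves that $(\p_\alpha)$ is isometrically the $\ell_1(\omega_1)$-basis in the dual and that every \emph{norm-attaining} functional is \emph{exactly finitely} supported in this basis; since Auerbach functionals attain their norm, no truncation is needed and the $\Delta$-system lemma applies to genuine finite supports.

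The second missing idea is how a single parameter choice ``kills'' a configuration. In the paper, after a reduction lemma putting the supports of the $g_\alpha$ in one of two canonical positions (pairwise consecutive, or a one-point common root followed by consecutive blocks), biorthogonality forces a nontrivial linear system $\sum_i c_i\langle\p_{\beta_i},v_{\alpha_j}\rangle=0$; the recursion chooses each scalar $\lambda_\gamma$ so that all the relevant determinants $\det(\langle\p_{\beta_i},v_{\alpha_j}\rangle)$ are nonzero, which is possible because $\lambda\mapsto\langle\p_\gamma(\lambda),u\rangle$ is a nontrivial real-analytic function with only finitely many zeros in $(0,\delta)$ and, at stage $\gamma$, only countably many configurations (and, under CH, only $\omega_1$ many vectors $v_\alpha$ in total) need to be handled. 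Note also that choosing a scalar at stage $\gamma$ cannot disturb earlier stages because $\p_\gamma$ is supported on $[0,\gamma]$; your plan of adding whole functionals $a_\xi$ to the dual ball has no such monotonicity, so your worry that later choices might ``wreck'' earlier kills is real and unaddressed. In short: the enumeration-plus-recursion shell is right, but the determinant/analyticity argument and the exact finite-support theorem --- the two ideas that make the proof work --- are absent, and the truncation step as described cannot be repaired.
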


The subsequent results present in this section are entirely devoted to the construction of the desired norm and the verification of the asserted property. The section is therefore naturally divided into two parts: in the former one, we introduce a family of equivalent norms (depending on some parameters) on $c_0(\omega_1)$ and prove some properties of every such norm; in the latter, we prove that careful choice of the parameters implies that the resulting space contains no uncountable Auerbach system. Throughout the section, we assume real scalars.\smallskip

We shall denote by $\n_\infty$, or just $\n$ if no confusion may arise, the canonical norm on $c_0(\omega_1)$ and by $(e_\alpha)_{\alpha<\omega_1}$ its canonical long Schauder basis; the corresponding set of biorthogonal functionals, in $\ell_1(\omega_1)=c_0(\omega_1)^*$, will be denoted by $(e^*_\alpha)_{\alpha<\omega_1}$.\smallskip

Fix a parameter $\delta>0$ so small that $\Delta:=\frac{\delta}{1-\delta}\leq 1/5$ (for example, we could choose $\delta=1/6$). We also select an injective long sequence $(\lambda_\alpha)_{\alpha<\omega_1}\subseteq(0,\delta)$. Moreover, for every $\alpha<\omega_1$ there exists an enumeration $\sigma_\alpha$ of the set $[0,\alpha)$, \emph{i.e.}, a bijection $\sigma_\alpha:|\alpha|\to\alpha$ (observing that $|\alpha|$ is either $\omega$ or a finite cardinal). We may therefore assume to have selected, for every $\alpha<\omega_1$, a fixed bijection $\sigma_\alpha$. Having fixed such notation, we are now in position to define elements $\p_\alpha\in\ell_1(\omega_1)$ ($\alpha<\omega_1$) as follows: $\p_0=e^*_0$ and, for $1\leq\alpha<\omega_1$,
$$\p_\alpha(\eta)=\begin{cases} 1&\textrm{if}\;\eta=\alpha\\ 0&\textrm{if}\;\eta>\alpha\\(\lambda_\alpha)^k & \textrm{if}\;\eta<\alpha,\,\eta=\sigma_\alpha(k).\end{cases}$$
The above enumerations may be chosen arbitrarily and the subsequent argument will not depend on any specific such choice. On the other hand, a substantial part of the argument to be presented will consist in explaining how to properly choose the coefficients $\lambda_\alpha$.\smallskip

We start with a few elementary properties of the functionals $(\p_\alpha)_{\alpha<\omega_1}$ and their use in the definition of the renorming. Plainly, $\|\p_\alpha-e^*_\alpha\|_1=\sum_{k=1}^{|\alpha|}(\lambda_\alpha)^k\leq \sum_{k=1}^\infty \delta^k =\Delta$, whence it follows that $\p_\alpha\in c_0(\omega_1)^*$ and $\|\p_\alpha\|_1\leq 1+\Delta$.

\begin{fact}\label{fact: basis equiv to ell_1} $(\p_\alpha)_{\alpha<\omega_1}$ is a long Schauder basis for $\ell_1(\omega_1)$, equivalent to the canonical Schauder basis $(e^*_\alpha)_{\alpha<\omega_1}$ of $\ell_1(\omega_1)$.
\end{fact}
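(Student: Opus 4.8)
The plan is to produce a bounded, invertible operator $T$ on $\ell_1(\omega_1)$ satisfying $Te^*_\alpha=\p_\alpha$ for every $\alpha<\omega_1$; once this is done, the equivalence of the two systems — and hence the fact that $(\p_\alpha)_{\alpha<\omega_1}$ is a long Schauder basis — follows immediately, because an isomorphism carries a long Schauder basis onto an equivalent long Schauder basis.

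First I would define $T$ on the dense subspace $c_{00}(\omega_1)=\mathrm{span}\{e^*_\alpha\colon\alpha<\omega_1\}$ by the linear extension of $e^*_\alpha\mapsto\p_\alpha$. The crucial point is that, since the ambient norm is the $\ell_1$-norm, a finitely supported vector $x=\sum_\alpha a_\alpha e^*_\alpha$ has $\|x\|_1=\sum_\alpha|a_\alpha|$; hence, using the triangle inequality and the bound $\|\p_\alpha\|_1\leq1+\Delta$ already recorded,
\[
\|Tx\|_1=\Bigl\|\sum_\alpha a_\alpha\p_\alpha\Bigr\|_1\leq\sum_\alpha|a_\alpha|\,\|\p_\alpha\|_1\leq(1+\Delta)\|x\|_1 .
\]
Therefore $T$ extends to a bounded operator on $\ell_1(\omega_1)$ with $\|T\|\leq1+\Delta$. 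Writing $T=I+S$, where $S$ is the linear extension of $e^*_\alpha\mapsto\p_\alpha-e^*_\alpha$ (so that $Se^*_0=0$), the same $\ell_1$-computation together with $\|\p_\alpha-e^*_\alpha\|_1\leq\Delta$ yields $\|Sx\|_1\leq\Delta\|x\|_1$ for $x\in c_{00}(\omega_1)$, whence $S$ extends boundedly with $\|S\|\leq\Delta\leq 1/5<1$. Consequently $T=I+S$ is invertible (with inverse the Neumann series $\sum_{n\geq0}(-S)^n$ and $\|T^{-1}\|\leq(1-\Delta)^{-1}$), so $T$ is an isomorphism of $\ell_1(\omega_1)$ onto itself.

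It remains to recall that $(e^*_\alpha)_{\alpha<\omega_1}$ is the canonical long Schauder basis of $\ell_1(\omega_1)$: the associated projections $P_\beta$ are the coordinate restrictions to $[0,\beta)$, which are norm-one, satisfy $P_\beta P_\gamma=P_{\min\{\beta,\gamma\}}$, and converge strongly to the identity as $\beta\to\omega_1$ because every element of $\ell_1(\omega_1)$ has countable support. Since $T$ is an isomorphism with $Te^*_\alpha=\p_\alpha$, the family $(\p_\alpha)_{\alpha<\omega_1}$ is a long Schauder basis of $\ell_1(\omega_1)$ — its canonical projections being $TP_\beta T^{-1}$ — and, by construction, it is equivalent to $(e^*_\alpha)_{\alpha<\omega_1}$.

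There is no serious obstacle here, but one pitfall is worth flagging: the classical small-perturbation theorem for Schauder bases does not apply, since it would require $\sum_\alpha\|\p_\alpha-e^*_\alpha\|_1$ to be finite, whereas this sum is infinite (uncountably many terms of size comparable to $\Delta$). What makes the argument work is the special geometry of $\ell_1(\omega_1)$: an operator prescribed on the canonical basis has norm equal to the supremum of the norms of the images $\|\p_\alpha\|_1$ (resp.\ $\|\p_\alpha-e^*_\alpha\|_1$), which is well under control. So the only thing to be careful about is to exploit the $\ell_1$-structure directly rather than to invoke a perturbation lemma that is unavailable in this uncountable setting.
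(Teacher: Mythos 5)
Your proof is correct and follows essentially the same route as the paper: both arguments hinge on the operator $T$ determined by $e^*_\alpha\mapsto\p_\alpha$, the $\ell_1$-estimate $\|T-I\|\leq\Delta<1$ coming from $\|\p_\alpha-e^*_\alpha\|_1\leq\Delta$, and the resulting invertibility of $T$ via the Neumann series. The paper merely presents the two-sided norm inequality for finite linear combinations explicitly before invoking $T$ for the density of the span, whereas you derive everything directly from the isomorphism; your closing remark about why the classical small-perturbation lemma is unavailable here is accurate and well taken.
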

\begin{proof} From the inequality $\|\p_\alpha - e^*_\alpha\|_1\leq\Delta$ observed above, it follows that
$$\left\Vert\sum_{i=1}^n d_i\p_{\alpha_i}\right\Vert_1\geq \left\Vert\sum_{i=1}^n d_i e^*_{\alpha_i}\right\Vert_1-\left\Vert\sum_{i=1}^n d_i(e^*_{\alpha_i}-\p_{\alpha_i})\right \Vert_1\geq$$
$$\geq\sum_{i=1}^n|d_i|-\sum_{i=1}^n|d_i|\|\p_{\alpha_i} - e^*_{\alpha_i}\|_1\geq(1-\Delta)\sum_{i=1}^n|d_i|$$
for every choice of scalars $(d_i)_{i=1}^n$. Combining this inequality with $\|\p_\alpha\|_1\leq 1+\Delta$ results in
$$(1-\Delta)\sum_{\alpha<\omega_1}|d_\alpha| \leq \left\Vert\sum_{\alpha<\omega_1} d_\alpha\p_{\alpha}\right\Vert_1 \leq (1+\Delta)\sum_{\alpha<\omega_1}|d_\alpha|.$$

In order to prove that $\overline{\rm span}\{\p_\alpha\}_{\alpha<\omega_1}=\ell_1(\omega_1)$, we shall consider the bounded linear operator $T\colon \ell_1(\omega_1)\to\ell_1(\omega_1)$ such that $T(e^*_\alpha)=\p_\alpha$. The first part of the argument shows that $T$ is, indeed, a bounded linear operator and that $\|T-I\|\leq\Delta<1$. Consequently, $T$ is an isomorphism of $\ell_1(\omega_1)$ onto itself, whence the closed linear span of $(\p_\alpha)_{\alpha<\omega_1}$ equals $\ell_1(\omega_1)$.
\end{proof}

We may now exploit the functionals $(\p_\alpha)_{\alpha<\omega_1}$ to define a renorming of $c_0(\omega_1)$.
\begin{definition}
$$\nn{x}:=\sup_{\alpha<\omega_1}|\langle\p_\alpha,x\rangle|\qquad (x\in c_0(\omega_1)).$$
Moreover, we also denote by $X$ the space $X:=(c_0(\omega_1),\nn{\cdot})$.
\end{definition}
Let us preliminarily note that if $\alpha<\beta$, then there exists $k\in\N$ such that $\alpha=\sigma_\beta(k)$ and consequently $|\langle\p_\beta,e_\alpha\rangle|=(\lambda_\beta)^k\leq 1$; it immediately follows that $\nn{e_\alpha}=1$. We may then readily check that $\nn{\cdot}$ is a norm, equivalent to $\n_\infty$. In fact, the inequality $\nn{\cdot}\leq(1+\Delta)\n_\infty$ is obvious and the lower estimate follows from a familiar pattern: if $\gamma<\omega_1$ is such that $\|x\|_\infty=|x(\gamma)|$, we then have
$$\nn{x}=\nn{2x(\gamma)e_\gamma+x-2x(\gamma)e_\gamma}\geq 2|x(\gamma)|-\nn{x-2x(\gamma)e_\gamma}$$
$$\geq2\|x\|_\infty-(1+\Delta)\|x-2x(\gamma)e_\gamma\|_\infty=(1-\Delta)\|x\|_\infty.$$

We shall also denote by $\nn{\cdot}$ the dual norm on $\ell_1(\omega_1)$; needless to say, such a  norm satisfies $(1+\Delta)^{-1}\n_1\leq\nn{\cdot}\leq(1-\Delta)^{-1}\n_1$. The definition of $\nn\cdot$ clearly implies that $\nn{\p_\alpha}\leq 1$ and $\langle\p_\alpha,e_\alpha\rangle=1$ actually forces $\nn{\p_\alpha}=1$.\smallskip

By the very definition, $\{\p_\alpha\}_{\alpha<\omega_1}$ is a $1$-norming set for $X$; we now note the important fact that such a collection of functionals is actually a boundary for $X$. A similar argument also shows that $X$ is a polyhedral Banach space and we also record such information in the next lemma, even if we shall not need it in what follows.
\begin{lemma} $\{\p_\alpha\}_{\alpha<\omega_1}$ is a boundary for $X$. Moreover, $X$ is a polyhedral Banach space.
\end{lemma}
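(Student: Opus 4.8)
The plan is to reduce everything to one elementary inequality. Recalling that $\|\p_\alpha-e^*_\alpha\|_1\leq\Delta$, for every $x\in c_0(\omega_1)$ and every $\alpha<\omega_1$ one has $\langle\p_\alpha,x\rangle-x(\alpha)=\langle\p_\alpha-e^*_\alpha,x\rangle$, hence
$$\bigl|\,|\langle\p_\alpha,x\rangle|-|x(\alpha)|\,\bigr|\;\leq\;|\langle\p_\alpha-e^*_\alpha,x\rangle|\;\leq\;\Delta\|x\|_\infty.$$
I shall call this the \emph{basic estimate}; together with the comparisons $(1-\Delta)\|x\|_\infty\leq\nn{x}\leq(1+\Delta)\|x\|_\infty$ recorded just before the lemma, and with the smallness $\Delta\leq 1/5$, it delivers both assertions by little more than bookkeeping.

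For the boundary statement, the point to be careful about is that one must do more than recall that $\{\p_\alpha\}_{\alpha<\omega_1}$ is $1$-norming: one has to check that for every $x$ the supremum $\nn{x}=\sup_{\alpha<\omega_1}|\langle\p_\alpha,x\rangle|$ is genuinely \emph{attained}. So I would fix $x\neq0$, set $t:=\nn{x}$, and note $t\geq(1-\Delta)\|x\|_\infty>\Delta\|x\|_\infty$ since $\Delta\leq 1/5<1/2$. Pick any $c$ with $\Delta\|x\|_\infty<c<t$. By the basic estimate, $|\langle\p_\alpha,x\rangle|\geq c$ forces $|x(\alpha)|\geq c-\Delta\|x\|_\infty>0$, so $\{\alpha<\omega_1:|\langle\p_\alpha,x\rangle|\geq c\}$ is contained in the finite set $\{\alpha<\omega_1:|x(\alpha)|\geq c-\Delta\|x\|_\infty\}$ (this uses only $x\in c_0(\omega_1)$), and it is non-empty because $c<t$. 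Every $\alpha$ outside this finite set has $|\langle\p_\alpha,x\rangle|<c<t$, so $t$ is attained on the set, say $t=|\langle\p_{\alpha_0},x\rangle|$; thus $x$ attains its norm at $\p_{\alpha_0}$ or $-\p_{\alpha_0}$, which is precisely the boundary property of $\{\p_\alpha\}_{\alpha<\omega_1}$.

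For polyhedrality I would first show that near every point of $S_X$ the norm $\nn{\cdot}$ coincides with the maximum of the moduli of finitely many of the $\p_\alpha$'s, and then finish by compactness. Fix $x\in S_X$; from $\nn{x}=1$ we get $\|x\|_\infty\leq(1-\Delta)^{-1}$, so $x\in c_0(\omega_1)$ and $F_x:=\{\alpha<\omega_1:|x(\alpha)|>1/2\}$ is finite. For $y\in X$ and $\alpha\notin F_x$ (so $|x(\alpha)|\leq1/2$), the basic estimate together with $\|y\|_\infty\leq(1-\Delta)^{-1}\nn{y}$ and $|y(\alpha)|\leq\tfrac12+(1-\Delta)^{-1}\nn{y-x}$ gives
$$|\langle\p_\alpha,y\rangle|\;\leq\;|y(\alpha)|+\Delta\|y\|_\infty\;\leq\;\tfrac12+(1-\Delta)^{-1}\nn{y-x}+\tfrac{\Delta}{1-\Delta}\,\nn{y}.$$
Since $(1-\Delta)^{-1}\leq 5/4$ and $\tfrac{\Delta}{1-\Delta}\leq 1/4$, as soon as $\nn{y-x}<1/16$ one has $\nn{y}>15/16$ and the right-hand side is strictly less than $\nn{y}$. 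Hence on $\{y:\nn{y-x}<1/16\}$ the functionals $\p_\alpha$ with $\alpha\notin F_x$ are strictly sub-norming, so $\nn{y}=\max_{\alpha\in F_x}|\langle\p_\alpha,y\rangle|$ there. Given a finite-dimensional subspace $E\subseteq X$, cover the compact sphere $S_E$ by finitely many balls of this kind, and let $G$ be the finite union of the corresponding sets $F_x$; then $\nn{y}=\max_{\alpha\in G}|\langle\p_\alpha,y\rangle|$ for every $y\in S_E$, hence for every $y\in E$ by homogeneity, so $B_E=\bigcap_{\alpha\in G}\{y\in E:|\langle\p_\alpha,y\rangle|\leq1\}$ is a bounded intersection of finitely many half-spaces, i.e.\ a polytope. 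Thus $X$ is polyhedral.

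The computations are light; the genuine hurdles are (a) recognising that the boundary assertion is an attainment statement, which I settle by pairing the basic estimate with the defining property of $c_0(\omega_1)$ and with the strict gap $1-\Delta>\Delta$, and (b) for polyhedrality, making sure the numerical constants close up — this is exactly why $\Delta$ was chosen so small — so that the non-supporting functionals at a point can be kept uniformly below the norm throughout a whole neighbourhood.
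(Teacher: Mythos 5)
Your proof is correct and follows essentially the same route as the paper: both arguments rest on the estimate $\bigl||\langle\p_\alpha,x\rangle|-|x(\alpha)|\bigr|\leq\Delta\|x\|_\infty$, deduce that the supremum defining $\nn{x}$ is effectively taken over the finite set of coordinates where $|x(\alpha)|$ is large (hence attained), and then obtain polyhedrality by covering the compact sphere of a finite-dimensional subspace with finitely many such finite coordinate sets. The only differences are cosmetic (you use a $\nn\cdot$-ball cover and threshold $1/2$ where the paper uses an $\n_\infty$-net and threshold $\Delta$), and your numerical constants do check out.
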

\begin{proof} In the proof of the first assertion, by homogeneity, it is clearly sufficient to consider $x\in(c_0(\omega_1),\nn\cdot)$ with $\|x\|_\infty=1$; in particular, there is $\alpha<\omega_1$ with $|x(\alpha)|=1$. For such an $\alpha$ we thus have:
$$|\langle\p_\alpha,x\rangle|\geq|\langle e^*_\alpha,x\rangle|-|\langle\p_\alpha-e^*_\alpha,x\rangle|\geq 1-\|\p_\alpha-e^*_\alpha\|_1\|x\|_\infty\geq 1-\Delta.$$
On the other hand, consider the finite set $N_x:=\{\gamma<\omega_1\colon|x(\gamma)|>\Delta\}$. Then for each $\alpha\notin N_x$ we obtain:
$$|\langle\p_\alpha,x\rangle|\leq|\langle e^*_\alpha,x\rangle|+|\langle\p_\alpha-e^*_\alpha,x\rangle|\leq |x(\alpha)|+\Delta\leq 2\Delta\leq 1-\Delta.$$
Consequently, the supremum appearing in the definition of $\nn{x}$ is actually over the finite set $N_x$ and it is therefore attained.\smallskip

We then turn to the polyhedrality of $(c_0(\omega_1),\nn{\cdot})$. Let $E$ be any finite-dimensional subspace of $(c_0(\omega_1),\nn{\cdot})$ and let $x_1,\dots,x_n$ be a finite $\Delta/2$-net (relative to the $\n_\infty$ norm) for the set $\{x\in E\colon\|x\|_\infty =1\}$. Let us consider the finite set $N_E:=\cup_{i=1}^nN_{x_i}$, where $N_x:=\{\gamma<\omega_1\colon|x(\gamma)|>\Delta/2\}$; then for every $x\in E$ with $\|x\|_\infty=1$ there clearly holds $$\{\gamma<\omega_1\colon|x(\gamma)|>\Delta\}\subseteq N_E.$$ The same calculations as before then demonstrate that for all such $x$ we have $$\sup_{\alpha<\omega_1}|\langle\p_\alpha,x\rangle|= \max_{\alpha\in N_E}|\langle\p_\alpha,x\rangle|.$$ Consequently, $\nn\cdot=\max_{\alpha\in N_E}|\langle\p_\alpha,\cdot\rangle|$ on $E$ and $\{\p_\alpha\}_{\alpha\in N_E}$ is a finite boundary for $E$, which is thus polyhedral.
\end{proof}

We now turn to the first crucial result for what follows, namely the fact that every norm-attaining functional on $X$ is finitely supported with respect to the basis $(\p_\alpha)_{\alpha<\omega_1}$.
\begin{theorem}\label{norm-attain finitely supported} Let $g\in S_{X^*}$ be a norm-attaining functional and let $u\in S_X$ be such that $\langle g,u\rangle=1$. Also, denote by $F$ the finite set $F:=\{\alpha<\omega_1\colon |u(\alpha)|>\frac{\Delta}{1-\Delta}\}$. Then
$$g=\sum_{\alpha\in F}g_\alpha\p_\alpha\qquad \text{and} \qquad\sum_{\alpha\in F}|g_\alpha|\leq1.$$
\end{theorem}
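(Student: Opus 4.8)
The plan is to exploit the boundary structure of $X$ together with the fact that $(\p_\alpha)_{\alpha<\omega_1}$ is a Schauder basis of $\ell_1(\omega_1)$ equivalent to the canonical one, so that $g$ has a unique expansion $g = \sum_{\alpha<\omega_1} g_\alpha \p_\alpha$ with $\sum_\alpha |g_\alpha| < \infty$. The goal is then to prove that $g_\alpha = 0$ for every $\alpha \notin F$. First I would invoke the previous lemma, or rather rerun its computation for the specific vector $u$: since $\nn{u} = 1$ and $\|u\|_\infty \geq (1-\Delta)^{-1}\nn{u}\cdot$ (wait — more precisely $\|\cdot\|_\infty \leq (1-\Delta)^{-1}\nn\cdot$, so $\|u\|_\infty \leq (1-\Delta)^{-1}$), the estimate $|\langle \p_\alpha, u\rangle| \leq |u(\alpha)| + \Delta\|u\|_\infty$ shows that for $\alpha \notin F$ we get $|\langle \p_\alpha, u\rangle| \leq \frac{\Delta}{1-\Delta} + \frac{\Delta}{1-\Delta} = 2\Delta' $ where $\Delta' = \frac{\Delta}{1-\Delta} \leq 1/5$; in particular $|\langle \p_\alpha, u\rangle| \leq 2/5 < 1$ whenever $\alpha \notin F$, while one also checks $|\langle \p_\alpha, u\rangle| \leq 1$ for all $\alpha$ (that's the definition of $\nn u = 1$). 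So the supremum defining $\nn u$ is attained, and only on the finite set $F$.

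Next I would pair $g$ against $u$: $1 = \langle g, u\rangle = \sum_{\alpha<\omega_1} g_\alpha \langle \p_\alpha, u\rangle$. Here is the key inequality. Split the sum as $\sum_{\alpha\in F} g_\alpha\langle\p_\alpha,u\rangle + \sum_{\alpha\notin F} g_\alpha \langle\p_\alpha, u\rangle$. For the first block, bound each $|\langle\p_\alpha,u\rangle| \leq 1$ to get $\bigl|\sum_{\alpha\in F} g_\alpha\langle\p_\alpha,u\rangle\bigr| \leq \sum_{\alpha\in F}|g_\alpha|$. For the second block, use the strict bound $|\langle\p_\alpha,u\rangle| \leq \theta$ with $\theta := 2\Delta' < 1$, giving $\bigl|\sum_{\alpha\notin F} g_\alpha\langle\p_\alpha,u\rangle\bigr| \leq \theta \sum_{\alpha\notin F}|g_\alpha|$. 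Hence
$$1 \leq \sum_{\alpha\in F}|g_\alpha| + \theta\sum_{\alpha\notin F}|g_\alpha|.$$
On the other hand $g \in S_{X^*}$, and since $\nn{\cdot}$ on $\ell_1(\omega_1)$ satisfies $\nn{h} \geq (1+\Delta)^{-1}\|h\|_1$ but more usefully $\nn{\sum g_\alpha \p_\alpha} = 1$ — I need the bound $\sum_{\alpha<\omega_1}|g_\alpha| \leq 1$ in the end anyway, so let me instead argue as follows. Since $\{\p_\alpha\}$ is a $1$-norming boundary and $\nn{\p_\alpha} = 1$, the dual ball $B_{X^*}$ contains each $\p_\alpha$; but I do not immediately get $\sum|g_\alpha|\leq 1$ for free. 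Instead: from $\nn{g} = 1$ and the lower basis estimate one only gets $\sum_\alpha |g_\alpha| \leq (1+\Delta)\cdot 1\cdot(\text{const})$. So the cleaner route is to combine the displayed inequality with the \emph{a priori} finiteness $\sum_\alpha|g_\alpha| =: M < \infty$. Write $M = \sum_{\alpha\in F}|g_\alpha| + \sum_{\alpha\notin F}|g_\alpha|$ and subtract: the displayed inequality gives $1 \leq M - (1-\theta)\sum_{\alpha\notin F}|g_\alpha|$, i.e. $(1-\theta)\sum_{\alpha\notin F}|g_\alpha| \leq M - 1$.

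The main obstacle, and the step that forces a little extra care, is closing the gap between $M$ and $1$: I must show $M \leq 1$, i.e. that $g = \sum g_\alpha\p_\alpha$ has $\ell_1$-coefficient-sum at most $1$. This is where I would use that $\{\p_\alpha\}_{\alpha<\omega_1}$ is a \emph{monotone-like} expansion: since each $\p_\alpha$ is $\{e_\beta : \beta \leq \alpha\}$-supported (it vanishes for $\eta > \alpha$), the basis $(\p_\alpha)$ is "triangular" with respect to $(e_\alpha^*)$, and for any finite set $G$ and signs one can test $g$ against a suitable norm-one vector of $X$ supported on $\{e_\beta : \beta \leq \max G\}$ to recover $\sum_{\alpha\in G}|g_\alpha| \leq \nn{g\vert}$-type bounds; alternatively, and most simply, I would test $g$ against vectors of the form $v = \sum_{\alpha\in G}\varepsilon_\alpha e_\alpha^{**}$... no — staying inside $X = c_0(\omega_1)$: evaluate $g$ on the canonical basis in reverse order. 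Concretely, $\langle g, e_\beta\rangle = \sum_{\alpha \geq \beta} g_\alpha\langle\p_\alpha,e_\beta\rangle = g_\beta + \sum_{\alpha>\beta}g_\alpha (\lambda_\alpha)^{k_\alpha(\beta)}$ where $|(\lambda_\alpha)^{k_\alpha(\beta)}|<\delta$; a finite-dimensional extreme-point / triangular argument on $\mathrm{span}\{e_\beta:\beta\in\mathrm{supp}\,g\text{-relevant}\}$ then yields $\sum_\alpha|g_\alpha|\leq \nn g = 1$. Once $M\leq 1$ is in hand, $(1-\theta)\sum_{\alpha\notin F}|g_\alpha| \leq M-1 \leq 0$ forces $g_\alpha = 0$ for all $\alpha\notin F$, hence $g = \sum_{\alpha\in F}g_\alpha\p_\alpha$; and then $\sum_{\alpha\in F}|g_\alpha| = M \leq 1$, which is exactly the second assertion. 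I expect verifying $M\leq 1$ (the triangularity argument) to be the genuine content; everything else is the elementary estimate above.
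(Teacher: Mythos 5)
Your strategy is sound and genuinely different from the paper's, but as written it has one real gap: everything hinges on the inequality $M:=\sum_{\alpha<\omega_1}|g_\alpha|\leq 1$, and you only gesture at how to prove it. Note that this inequality is precisely the isometric statement of Corollary \ref{coroll: 1-equiv to ell1}, which the paper \emph{deduces from} Theorem \ref{norm-attain finitely supported} via the Bishop--Phelps theorem, so you cannot quote it; and the basis equivalence of Fact \ref{fact: basis equiv to ell_1} only gives $M\leq\frac{1+\Delta}{1-\Delta}>1$, which is not enough to run your subtraction. The paper sidesteps this issue entirely: it approximates $g$ in the $w^*$-topology by convex combinations $f=\sum d_\alpha\p_\alpha$ of $\{\pm\p_\alpha\}$ (for which $\sum|d_\alpha|\leq1$ holds by construction), proves a quantitative mass-transfer claim ($\langle f,u\rangle\geq1-\eta$ forces $\sum_{\alpha\notin F}|d_\alpha|\leq12\eta$), and then passes to the limit using $w^*$-lower semicontinuity of the dual norm.

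That said, your ``triangularity'' idea does work and can be made precise, after which your short computation closes the proof more directly than the paper's net argument. Given $\beta_1<\dots<\beta_n$ and signs $\e_i=\pm1$, set $v=\sum_j c_je_{\beta_j}$, where the $c_j$ solve the system $\langle\p_{\beta_i},v\rangle=\e_i$: the matrix $\left(\langle\p_{\beta_i},e_{\beta_j}\rangle\right)_{i,j}$ is lower triangular with unit diagonal and off-diagonal row sums at most $\sum_{k\geq1}(\lambda_{\beta_i})^k\leq\Delta$, so induction gives $|c_j|\leq(1-\Delta)^{-1}$. Hence for $\alpha\notin\{\beta_1,\dots,\beta_n\}$ one gets $|\langle\p_\alpha,v\rangle|\leq(1-\Delta)^{-1}\sum_{k\geq1}(\lambda_\alpha)^k\leq\Delta/(1-\Delta)<1$, while $|\langle\p_{\beta_i},v\rangle|=1$; therefore $\nn{v}=1$. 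Testing $g$ against such $v$ with $\e_i={\rm sgn}(g_{\beta_i})$ and letting the finite set increase to ${\rm supp}(g)$ yields $\nn{g}\geq\sum_\alpha|g_\alpha|$, i.e.\ $M\leq1$. With this lemma in place your argument is complete and correct, and it has the bonus of establishing Corollary \ref{coroll: 1-equiv to ell1} without any appeal to Bishop--Phelps.
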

\begin{proof} We begin with two very simple remarks, that we shall use in the course of the argument. Combining the estimate in the proof of Fact \ref{fact: basis equiv to ell_1} with $\nn{\p_\alpha}\leq1$, we readily deduce that
\begin{equation}\label{eqz: equiv of basis in |||}
\frac{1-\Delta}{1+\Delta}\sum_{\alpha<\omega_1}|d_\alpha|\leq\nn{\sum_{\alpha<\omega_1}d_\alpha\p_\alpha}\leq\sum_{\alpha<\omega_1}|d_\alpha|.
\end{equation}
Moreover, we have 
\begin{equation}\label{eqz: w* closure}
\overline{{\rm conv}}^{w^*}\left\{\pm\p_\alpha\right\}_{\alpha<\omega_1}=B_{X^*}.
\end{equation}
This is an immediate consequence of the Hahn--Banach separation theorem and the fact that $\{\pm\p_\alpha\}_{\alpha<\omega_1}$ is $1$-norming for $X$.\smallskip

We now start with the argument: let $g\in S_{X^*}$ be a norm-attaining functional and choose $u\in S_X$ such that $\langle g,u\rangle=1$. We may express $g$ as $g=\sum_{\alpha<\omega_1}g_\alpha\p_\alpha$, and we also set $F:=\{\alpha<\omega_1\colon |u(\alpha)|>\frac{\Delta}{1-\Delta}\}$. Moreover, let us choose arbitrarily $$f=\sum_{\alpha<\omega_1}d_\alpha\p_\alpha\in{\rm conv}\left\{\pm\p_\alpha\right\}_{\alpha<\omega_1}$$ (\emph{i.e.}, only finitely many $d_\alpha$'s are non-zero and $\sum_{\alpha<\omega_1}|d_\alpha|\leq1$).
\begin{claim*} If $\langle f,u\rangle\geq 1-\eta$, then $$\sum_{\alpha\notin F}|d_\alpha|\leq12\eta.$$
\end{claim*}
\begin{proof}[Proof of the Claim.] Recall that $(1+\Delta)^{-1}\leq\|u\|_\infty\leq(1-\Delta)^{-1}$; hence for each $\alpha\notin F$ we have
$$|\langle\p_\alpha,u\rangle|\leq|\langle e^*_\alpha,u\rangle|+|\langle\p_\alpha-e^*_\alpha,u\rangle|\leq\frac{\Delta}{1-\Delta}+\Delta\|u\|_\infty\leq2\frac{\Delta}{1-\Delta}.$$ 
Consequently, setting $\rho:=\sum_{\alpha\notin F}|d_\alpha|$, we have
$$1-\eta\leq\langle f,u\rangle\leq\sum_{\alpha\notin F}|d_\alpha| |\langle\p_\alpha,u\rangle|+\sum_{\alpha\in F}d_\alpha\langle\p_\alpha,u\rangle\leq 2\rho\frac{\Delta}{1-\Delta}+\sum_{\alpha\in F}d_\alpha\langle\p_\alpha,u\rangle,$$
whence
$$(\dagger)\qquad\sum_{\alpha\in F}d_\alpha\langle\p_\alpha,u\rangle\geq1-\eta-2\rho\frac{\Delta}{1-\Delta}.$$

On the other hand, there exists $\overline{\alpha}<\omega_1$ with $|u(\overline{\alpha})| \geq(1+\Delta)^{-1}$ and for such $\overline{\alpha}$ we have
$$(\ddagger)\qquad|\langle\p_{\overline{\alpha}},u\rangle|\geq|\langle e^*_{\overline{\alpha}},u\rangle|- |\langle\p_{\overline{\alpha}}-e^*_{\overline{\alpha}},u\rangle|\geq\frac{1}{1+\Delta}-\frac{\Delta}{1-\Delta}.$$

Let us now consider the functional
$$\psi:=\sum_{\alpha\in F}d_\alpha\p_\alpha+\rho\cdot{\rm sgn}\langle \p_{\overline{\alpha}},u\rangle\p_{\overline{\alpha}};$$
clearly, $\psi\in{\rm conv}\{\pm\p_\alpha\}_{\alpha<\omega_1}$, so $\nn{\psi}\leq1$. We can therefore combine this information with $(\dagger)$ and $(\ddagger)$ and conclude that 
\begin{eqnarray*}
1\geq\langle\psi,u\rangle &=& \sum_{\alpha\in F}d_\alpha\langle\p_\alpha,u\rangle+ \rho\cdot|\langle\p_{\overline{\alpha}},u\rangle|\\
&\geq& 1-\eta-2\rho\frac{\Delta}{1-\Delta}+\rho\cdot\left(\frac{1}{1+\Delta}- \frac{\Delta}{1-\Delta}\right)\\
&=& 1-\eta+\rho\cdot\left(\frac{1}{1+\Delta}-3\frac{\Delta}{1-\Delta}\right)\geq 1-\eta+\frac{\rho}{12};
\end{eqnarray*}
in the last inequality we used the fact that $\Delta\mapsto\left(\frac{1}{1+\Delta}- 3\frac{\Delta}{1-\Delta}\right)$ is a decreasing function on $(0,1)$ and the assumption that $\Delta\leq1/5$. It follows that $\rho/12\leq\eta$, whence the proof of the claim is concluded.
\end{proof}

We now amalgamate those facts together. According to (\ref{eqz: w* closure}), we may find a net $(f_\tau)_{\tau\in I}$ in ${\rm conv}\{\pm\p_\alpha\}_{\alpha<\omega_1}$ such that $f_\tau\to g$ in the $w^*$- topology; the elements $f_\tau$ have the form $f_\tau=\sum_{\alpha<\omega_1}d^\alpha_\tau\p_\alpha$, where, for each $\tau\in I$, $\sum_{\alpha<\omega_1}|d^\alpha_\tau|\leq1$ and only finitely many $d^\alpha_\tau$'s are different from zero.

The net $\left(\sum_{\alpha\in F}d^\alpha_\tau\p_\alpha\right)_{\tau\in I}$ is plainly a bounded net in a finite-dimensional Banach space (for what concerns the boundedness, observe that (\ref{eqz: equiv of basis in |||}) implies $\nn{\sum_{\alpha\in F}d^\alpha_\tau\p_\alpha}\leq1$). Hence, up to passing to a subnet, we may safely assume that it converges in $\nn{\cdot}$, and \emph{a fortiori} in the $w^*$-topology of $X^*$, to an element, say $\sum_{\alpha\in F}\tilde{d}_\alpha\p_\alpha$. The basis equivalence contained in the inequalities (\ref{eqz: equiv of basis in |||}) now allows us to deduce in particular that $\sum_{\alpha\in F}|\tilde{d}_\alpha|\leq1$.

As a consequence of this currently obtained norm convergence, we see that
$$\sum_{\alpha\notin F}d_\tau^\alpha\p_\alpha=f_\tau-\sum_{\alpha\in F}d_\tau^\alpha \p_\alpha\overset{w^*}{\longrightarrow}g-\sum_{\alpha\in F}\tilde{d}_\alpha\p_\alpha= \sum_{\alpha\notin F}g_\alpha\p_\alpha+\sum_{\alpha\in F}\left(g_\alpha-\tilde{d}_\alpha\right)\p_\alpha$$
and our present goal is to estimate the $\nn\cdot$-norm of the right hand side, making use of this $w^*$ convergence and the above claim.\smallskip

Let us fix temporarily $\eta>0$; from the $w^*$ convergence we obtain $\langle f_\tau,u\rangle\to\langle g,u\rangle=1$, whence the existence of $\tau_0\in I$ such that $\langle f_\tau,u\rangle\geq1-\eta$ for every $\tau\geq\tau_0$ \footnote{we are denoting by $\geq$ both the order on the real line and the preorder in the directed set $I$---this should cause no confusion, hopefully.}. Consequently, according to the above claim we deduce
$$\nn{\sum_{\alpha\notin F}d_\tau^\alpha\p_\alpha}\leq\sum_{\alpha\notin F} |d_\tau^\alpha|\leq12\eta\qquad(\tau\geq\tau_0).$$
This and the $w^*$ lower semi-continuity of the $\nn\cdot$-norm on $X^*$ then reassure us that
$$\nn{\sum_{\alpha\notin F}g_\alpha\p_\alpha+\sum_{\alpha\in F}\left(g_\alpha-\tilde{d}_\alpha\right)\p_\alpha}\leq12\eta.$$

Since $\eta>0$ was fixed arbitrarily, we may let $\eta\to0^+$ in the above inequality and, also exploiting the fact that $(\p_\alpha)_{\alpha<\omega_1}$ is a Schauder basis, we conclude that $g_\alpha=0$ whenever $\alpha\notin F$, while $g_\alpha=\tilde{d}_\alpha$ for every $\alpha\in F$. Therefore,
$$g=\sum_{\alpha<\omega_1}g_\alpha\p_\alpha=\sum_{\alpha\in F}g_\alpha\p_\alpha,$$
where $\sum_{\alpha\in F}|g_\alpha|=\sum_{\alpha\in F}|\tilde{d}_\alpha|\leq1$.
\end{proof}

Note that in the conclusion of the theorem we necessarily have $\sum_{\alpha\in F}|g_\alpha|=1$, since $1=\nn{g}\leq\sum_{\alpha\in F}|g_\alpha|$. Consequently, if $g=\sum_{\alpha\in F}g_\alpha\p_\alpha$ is any norm-attaining functional we have $\nn{\sum_{\alpha\in F}g_\alpha\p_\alpha}=\sum_{\alpha\in F}|g_\alpha|$. Since the set of norm-attaining functionals is dense in $X^*$, according to the Bishop--Phelps theorem, this equality holds true for every functional in $X^*$. We thus have the following immediate corollary.
\begin{corollary}\label{coroll: 1-equiv to ell1} $(\p_\alpha)_{\alpha<\omega_1}$ is isometrically equivalent to the canonical basis of $\ell_1(\omega_1)$.
\end{corollary}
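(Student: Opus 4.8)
The plan is to derive the corollary directly from Theorem~\ref{norm-attain finitely supported}. By Fact~\ref{fact: basis equiv to ell_1}, the family $(\p_\alpha)_{\alpha<\omega_1}$ is a Schauder basis of $\ell_1(\omega_1)$, so every $g\in X^*$ has a unique expansion $g=\sum_{\alpha<\omega_1}g_\alpha\p_\alpha$; put $N(g):=\sum_{\alpha<\omega_1}|g_\alpha|$. Inequality~\eqref{eqz: equiv of basis in |||} shows that $N$ is a norm on $X^*$ with $\frac{1-\Delta}{1+\Delta}N(g)\le\nn g\le N(g)$ for all $g$, hence in particular $N$ is $\nn\cdot$-continuous. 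The assertion that $(\p_\alpha)_{\alpha<\omega_1}$ is isometrically equivalent to the canonical basis of $\ell_1(\omega_1)$ is precisely the isometric identity $\nn g=N(g)$ for every $g\in X^*$, and this is what I would upgrade to from the mere equivalence above.

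First I would verify the identity on norm-attaining functionals. By homogeneity it suffices to treat $g\in S_{X^*}$ with $\langle g,u\rangle=1$ for some $u\in S_X$. Theorem~\ref{norm-attain finitely supported} provides a finite set $F$ with $g=\sum_{\alpha\in F}g_\alpha\p_\alpha$ and $N(g)=\sum_{\alpha\in F}|g_\alpha|\le1$; conversely, the triangle inequality together with $\nn{\p_\alpha}=1$ gives $1=\nn g\le\sum_{\alpha\in F}|g_\alpha|\,\nn{\p_\alpha}=N(g)$. Hence $\nn g=N(g)\,(=1)$, as desired. Then I would pass to all of $X^*$: both $\nn\cdot$ and $N$ are $\nn\cdot$-continuous on $X^*$, and by the Bishop--Phelps theorem the functionals attaining their norm on $B_X$ are $\nn\cdot$-dense in $X^*$, so two continuous functions agreeing on a dense set must coincide, giving $\nn g=N(g)$ for every $g\in X^*$.

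Since all of the substance is already concentrated in Theorem~\ref{norm-attain finitely supported}, I do not expect any genuine obstacle here; the only points that need a moment's care are recording that $N$ really is a norm equivalent to $\nn\cdot$ (so that the density argument applies) and invoking the Bishop--Phelps theorem in its dual formulation, i.e.\ for functionals on $X$ attaining their norm on $B_X$, rather than for $X^*$ regarded as an abstract Banach space.
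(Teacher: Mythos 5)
Your proof is correct and follows essentially the same route as the paper: the identity $\nn g=\sum_{\alpha\in F}|g_\alpha|$ is first obtained for norm-attaining functionals by combining the conclusion of Theorem~\ref{norm-attain finitely supported} with the triangle inequality, and is then extended to all of $X^*$ via the Bishop--Phelps theorem and continuity. The only difference is that you spell out the continuity of both norms explicitly, which the paper leaves implicit.
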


In turn, this also implies (in the same notation as in the theorem)
$$\{\alpha<\omega_1\colon g_\alpha\neq0\}\subseteq\{\alpha<\omega_1\colon|\langle\p_\alpha,u\rangle|=1\}.$$
In fact, $1=\nn{g}=\sum_{\alpha\in F}|g_\alpha|$ and $1=\nn{u}\geq| \langle\p_\alpha,u\rangle|$ imply
$$1=\langle g,u\rangle=\sum_{\alpha<\omega_1}g_\alpha\langle\p_\alpha,u\rangle \leq\sum_{\alpha<\omega_1}|g_\alpha|\cdot|\langle\p_\alpha,u\rangle|\leq \sum_{\alpha<\omega_1}|g_\alpha|=1.$$
Consequently, all inequalities are, in fact, equalities and it follows that $|\langle\p_\alpha,u\rangle|=1$ whenever $g_\alpha\neq0$. It also follows that for every $\alpha<\omega_1$ we have $|g_\alpha|=g_\alpha\cdot\langle\p_\alpha,u\rangle$ and in particular $\langle\p_\alpha,u\rangle={\rm sgn}(g_\alpha)$ whenever $g_\alpha\neq0$.

As a piece of notation, it will be convenient to denote by ${\rm supp}(g)$ the set ${\rm supp}(g):=\{\alpha<\omega_1\colon g_\alpha\neq0\}$ for a functional $g=\sum_{\alpha<\omega_1}g_\alpha\p_\alpha\in X^*$.\smallskip

We can now approach the investigation of uncountable Auerbach systems in the space $X$. Our last observation for this first part is the fact that if $X$ contains an uncountable Auerbach system, then it also contains an uncountable Auerbach system such that the supports of the functionals are in a very specific position: either the supports are mutually disjoint and consecutive or the collection of the supports has an initial common root with cardinality $1$, followed by consecutive blocks. As it is to be expected, the $\Delta$-system lemma will play a prominent r\^ole in the proof; we shall also exploit again the same `transfer of mass' principle already present in the proof of the claim in Theorem \ref{norm-attain finitely supported}.
\begin{lemma}\label{"disjoint" supports} Assume that $X$ contains an uncountable Auerbach system. Then $X$ also contains an Auerbach system $\{\tilde{u}_\alpha;\tilde{g}_\alpha\}_{\alpha<\omega_1}$ such that one of the following two conditions is satisfied:
\begin{enumerate}
\item either there exists $\gamma<\omega_1$ with the properties that $\gamma=\min({\rm supp}(\tilde{g}_\alpha))$ for every $\alpha<\omega_1$ and ${\rm supp}(\tilde{g}_\alpha)\setminus\{\gamma\}<{\rm supp}(\tilde{g}_\beta)\setminus\{\gamma\}$ for every $\alpha<\beta<\omega_1$;
\item or ${\rm supp}(\tilde{g}_\alpha)<{\rm supp}(\tilde{g}_\beta)$ for every $\alpha<\beta<\omega_1$.
\end{enumerate}
\end{lemma}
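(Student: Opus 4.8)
The plan is to combine the fact that norm-attaining functionals on $X$ are finitely supported over $(\p_\alpha)_{\alpha<\omega_1}$ (Theorem~\ref{norm-attain finitely supported}) with the $\Delta$-system lemma and a `mass-transfer' argument of the same flavour as the Claim inside the proof of Theorem~\ref{norm-attain finitely supported}.

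\emph{Step 1 (thinning the supports).} Let $\{u_\alpha;g_\alpha\}_{\alpha<\omega_1}$ be an uncountable Auerbach system in $X$. Each $g_\alpha$ attains its norm at $u_\alpha$, so by Theorem~\ref{norm-attain finitely supported} the set $S_\alpha:=\mathrm{supp}(g_\alpha)$ is finite; write $g_\alpha=\sum_{\delta\in S_\alpha}g_\alpha^\delta\,\p_\delta$. The $\Delta$-system lemma, together with a pigeonhole on cardinalities, lets me pass to an uncountable subfamily along which $S_\alpha\cap S_\beta=R$ for a fixed finite root $R$ (and distinct $\alpha,\beta$), all $|S_\alpha|$ equal, and hence $R\subseteq S_\alpha$ for every $\alpha$. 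The blocks $T_\alpha:=S_\alpha\setminus R$ are then pairwise disjoint; since the $g_\alpha$ are linearly independent (being biorthogonal to the $u_\alpha$) they cannot all lie in the $|R|$-dimensional space $\mathrm{span}\{\p_\delta:\delta\in R\}$, so $T_\alpha\neq\varnothing$ for every $\alpha$, of constant cardinality $k\geq1$. As $R$ is finite and the $T_\alpha$ are disjoint, a further uncountable subfamily has $R<T_\alpha$; and a transfinite recursion of length $\omega_1$ --- at each step the countably many blocks already chosen are bounded by some countable ordinal, and disjointness yields a not-yet-used block above it --- produces an uncountable subfamily, re-indexed by $\omega_1$, with $T_\alpha<T_\beta$ whenever $\alpha<\beta$. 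If $R=\varnothing$, then the supports $S_\alpha=T_\alpha$ are pairwise disjoint and consecutive, and we have produced the second alternative of the statement (with $\tilde u_\alpha:=u_\alpha$, $\tilde g_\alpha:=g_\alpha$).

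\emph{Step 2 (collapsing a non-empty root).} Suppose now $R=\{\gamma_1<\dots<\gamma_m\}$ with $m\geq1$. By the remarks following Corollary~\ref{coroll: 1-equiv to ell1} we have $\langle\p_\delta,u_\alpha\rangle=\mathrm{sgn}(g_\alpha^\delta)$ for every $\alpha$ and every $\delta\in S_\alpha$. The sign pattern $\bigl(\mathrm{sgn}(g_\alpha^{\gamma_1}),\dots,\mathrm{sgn}(g_\alpha^{\gamma_m})\bigr)\in\{-1,1\}^m$ is constant on an uncountable subfamily (re-indexed again by $\omega_1$); call it $(s_1,\dots,s_m)$. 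Then for all $\alpha,\beta<\omega_1$ and every $i\leq m$, since $\gamma_i\in R\subseteq S_\beta$,
$$\langle\p_{\gamma_i},u_\beta\rangle=\mathrm{sgn}(g_\beta^{\gamma_i})=s_i\qquad\text{and}\qquad g_\alpha^{\gamma_i}\,s_i=|g_\alpha^{\gamma_i}| .$$
Put $\gamma:=\gamma_1$, $s:=s_1$ and $r_\alpha:=\sum_{i=1}^m|g_\alpha^{\gamma_i}|$, so that $0<r_\alpha\leq\nn{g_\alpha}=1$, and define
$$\tilde g_\alpha:=s\,r_\alpha\,\p_\gamma+\sum_{\delta\in T_\alpha}g_\alpha^\delta\,\p_\delta\in X^* ,\qquad \tilde u_\alpha:=u_\alpha .$$

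\emph{Step 3 (verification).} Because $(\p_\alpha)_{\alpha<\omega_1}$ is isometrically equivalent to the canonical basis of $\ell_1(\omega_1)$ (Corollary~\ref{coroll: 1-equiv to ell1}),
$$\nn{\tilde g_\alpha}=|s\,r_\alpha|+\sum_{\delta\in T_\alpha}|g_\alpha^\delta|=r_\alpha+\sum_{\delta\in T_\alpha}|g_\alpha^\delta|=\nn{g_\alpha}=1 .$$
For biorthogonality, using $\langle\p_\gamma,u_\beta\rangle=s$, $s^2=1$ and $r_\alpha=\sum_{i=1}^m g_\alpha^{\gamma_i}s_i=\sum_{i=1}^m g_\alpha^{\gamma_i}\langle\p_{\gamma_i},u_\beta\rangle$,
$$\langle\tilde g_\alpha,u_\beta\rangle=r_\alpha+\sum_{\delta\in T_\alpha}g_\alpha^\delta\langle\p_\delta,u_\beta\rangle=\sum_{i=1}^{m}g_\alpha^{\gamma_i}\langle\p_{\gamma_i},u_\beta\rangle+\sum_{\delta\in T_\alpha}g_\alpha^\delta\langle\p_\delta,u_\beta\rangle=\langle g_\alpha,u_\beta\rangle=\delta_{\alpha\beta} .$$
Since also $\nn{\tilde u_\alpha}=1$ and $\langle\tilde g_\alpha,\tilde u_\alpha\rangle=1$, the system $\{\tilde u_\alpha;\tilde g_\alpha\}_{\alpha<\omega_1}$ is an Auerbach system; moreover $\mathrm{supp}(\tilde g_\alpha)=\{\gamma\}\cup T_\alpha$ with $\gamma<\min T_\alpha$ (as $R<T_\alpha$) and $T_\alpha<T_\beta$ for $\alpha<\beta$, so $\gamma=\min(\mathrm{supp}(\tilde g_\alpha))$ for every $\alpha$ and the first alternative of the statement holds.

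\emph{Where the difficulty lies.} The combinatorial thinning of Step~1 is routine. The heart of the matter is the mass-transfer in Steps~2--3: collapsing the entire root $R$ onto the single coordinate $\gamma$ while keeping \emph{exact} biorthogonality. This succeeds only because (a) fixing the sign pattern makes the root contribution $\sum_{i=1}^m g_\alpha^{\gamma_i}\langle\p_{\gamma_i},u_\beta\rangle$ equal to $r_\alpha$ independently of $\beta$, and (b) the isometric $\ell_1$-structure of $(\p_\alpha)_{\alpha<\omega_1}$ ensures that redistributing the mass $r_\alpha$ onto one coordinate leaves $\nn{\tilde g_\alpha}$ unchanged; both facts are peculiar to the renorming under consideration.
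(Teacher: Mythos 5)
Your argument is correct and is essentially the paper's own proof: a $\Delta$-system/pigeonhole thinning, stabilisation of the signs $\langle\p_{\gamma_i},u_\alpha\rangle=\mathrm{sgn}(g_\alpha^{\gamma_i})$ on the root, and the same mass-transfer of $\sum_{i}|g_\alpha^{\gamma_i}|$ onto a single root coordinate, with biorthogonality preserved via the sign identities and the norm preserved via the isometric $\ell_1$-equivalence of $(\p_\alpha)_{\alpha<\omega_1}$. The only (cosmetic) differences are that the paper normalises the sign at the chosen root element by replacing the system with its negative rather than carrying the factor $s$, and it performs the ordering of the blocks after, rather than before, the collapse.
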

Needless to say, the above conditions are mutually exclusive. Let us also point out explicitly that (1) implies in particular ${\rm supp}(\tilde{g}_\alpha)\cap{\rm supp}(\tilde{g}_\beta)=\{\gamma\}$ for $\alpha<\beta<\omega_1$, while (2) implies that the sets ${\rm supp}(\tilde{g}_\alpha)$ are mutually disjoint.

\begin{proof}\footnote{In this proof we shall denote by $\Delta$ a finite set, obtained via the $\Delta$-system lemma. This will not create conflicts with the parameter $\Delta\leq1/5$, fixed at the very beginning of the section, since such parameter will play no r\^ole here.} Assume that $X$ contains an uncountable Auerbach system and let us fix one, say $\{u_\alpha;g_\alpha\}_{\alpha<\omega_1}$; according to Theorem \ref{norm-attain finitely supported}, we know that the sets ${\rm supp}(g_\alpha)$ are finite sets. In view of the $\Delta$-system lemma we can therefore assume (up to passing to an uncountable subset of $\omega_1$ and relabeling) that there exists a finite set $\Delta\subseteq\omega_1$ such that ${\rm supp}(g_\alpha)\cap{\rm supp}(g_\beta)=\Delta$ for distinct $\alpha,\beta<\omega_1$. In the case that $\Delta$ is the empty set we have obtained an uncountable Auerbach system with mutually disjointly supported functionals and a simple transfinite induction argument yields the existence of an uncountable subcollection of functionals whose supports are consecutive. In this case, (2) holds and we are done.\smallskip

Alternatively, if $\Delta\neq\varnothing$ we first have to concentrate all the mass present in $\Delta$ in a single coordinate. Fixed any $\gamma\in\Delta$, we have $\gamma\in{\rm supp}(g_\alpha)$, whence $|\langle\p_\gamma,u_\alpha\rangle|=1$ for every $\alpha<\omega_1$. Consequently, we can pass to an uncountable subset of $\omega_1$ and assume that $\langle\p_\gamma,u_\alpha\rangle$ has the same sign for every $\alpha<\omega_1$. If we repeat the same procedure for all the finitely many $\gamma$'s in $\Delta$ we see that we can assume without loss of generality that there are signs $\e_\gamma=\pm1$ ($\gamma\in\Delta$) such that $\langle\p_\gamma,u_\alpha\rangle=\e_\gamma$ for each $\alpha<\omega_1$ and $\gamma\in\Delta$. Let us also fix arbitrarily an element $\overline{\gamma}\in\Delta$ and assume, for simplicity, that $\e_{\overline{\gamma}}=1$ (if this is not the case, we can just achieve it by replacing the Auerbach system $\{u_\alpha;g_\alpha\}_{\alpha<\omega_1}$ by $\{-u_\alpha;-g_\alpha\}_{\alpha<\omega_1}$). \smallskip

We are now in position to define the functionals $\tilde{g}_\alpha$ ($\alpha<\omega_1$). Assume that
$$g_\alpha=\sum_{\gamma<\omega_1}g_\alpha^\gamma\p_\gamma=\sum_{\gamma\in\Delta} g_\alpha^\gamma\p_\gamma+\sum_{\gamma\in\Delta^\complement}g_\alpha^\gamma\p_\gamma,$$
where $\sum_{\gamma<\omega_1}|g_\alpha^\gamma|=1$, for every $\alpha<\omega_1$. We may then define
$$\tilde{g}_\alpha=\sum_{\gamma\in\Delta}|g_\alpha^\gamma|\cdot\p_{\overline{\gamma}} +\sum_{\gamma\in\Delta^\complement}g_\alpha^\gamma\p_\gamma;$$
plainly, $\nn{\tilde{g}_\alpha}=1$ too (this follows from Corollary \ref{coroll: 1-equiv to ell1}). \smallskip

Finally, in order to evaluate $\langle\tilde{g}_\alpha,u_\beta\rangle$, note preliminarily that if $\gamma\in\Delta$ then $\gamma\in{\rm supp}(g_\alpha)$ for each $\alpha<\omega_1$; since $g_\alpha$ attains its norm at $u_\alpha$, the remarks preceding the present lemma imply that
$$|g_\alpha^\gamma|=g_\alpha^\gamma\langle\p_\gamma,u_\alpha\rangle= g_\alpha^\gamma\e_\gamma =g_\alpha^\gamma\langle\p_\gamma,u_\beta\rangle,$$
whenever $\alpha,\beta<\omega_1$. Consequently,
\begin{eqnarray*}
\langle\tilde{g}_\alpha,u_\beta\rangle &:=& \sum_{\gamma\in\Delta}|g_\alpha^\gamma| \cdot\langle\p_{\overline{\gamma}},u_\beta\rangle+\sum_{\gamma\in\Delta^\complement} g_\alpha^\gamma\langle\p_\gamma,u_\beta\rangle\\
&=& \sum_{\gamma\in\Delta}|g_\alpha^\gamma|+\sum_{\gamma\in\Delta^\complement} g_\alpha^\gamma\langle\p_\gamma,u_\beta\rangle\\
&=& \sum_{\gamma\in\Delta}g_\alpha^\gamma\langle\p_\gamma,u_\beta\rangle+ \sum_{\gamma\in\Delta^\complement} g_\alpha^\gamma\langle\p_\gamma,u_\beta\rangle\\
&=& \sum_{\gamma<\omega_1}g_\alpha^\gamma\langle\p_\gamma,u_\beta\rangle=\langle g_\alpha,u_\beta\rangle.
\end{eqnarray*}
It follows that $\{u_\alpha;\tilde{g}_\alpha\}_{\alpha<\omega_1}$ is also an Auerbach system and the mutual intersections of the supports of the functionals $\tilde{g}_\alpha$ all reduce to the singleton $\{\overline{\gamma}\}$. Finally, a transfinite induction argument analogous to the one needed above proves the existence of an Auerbach system satisfying (1).
\end{proof}

We can finally pass to the second part of our considerations and prove the main result of the section. Prior to this, the following remark is dedicated to the presentation, in a~simplified setting, of one of the main ingredients in the proof of the result, concerning the choice of the parameters $\lambda_\alpha$.
\begin{remark}Let $u\in c_0(\omega_1)$ be any non-zero vector and fix an ordinal $\gamma<\omega_1$ with ${\rm supp}(u)<\gamma$. We note that there are possible many choices of $\lambda_\gamma$ such that the corresponding functional $\p_\gamma$ satisfies $\langle\p_\gamma,u\rangle\neq0$. Since the definition of $\p_\gamma$ depends on the choice of the parameter $\lambda_\gamma$, we shall also denote by $\p_\gamma(\lambda)$ the functional obtained choosing $\lambda_\gamma=\lambda$. Let us then observe that the function

$$\lambda\mapsto\langle\p_\gamma(\lambda),u\rangle:=\sum_{\alpha<\gamma}u(\alpha) \langle\p_\gamma(\lambda),e_\alpha\rangle=\sum_{k=1}^{|\gamma|} u(\sigma_\gamma(k))\lambda^k$$
is expressed by a power series with bounded coefficients, not all of which equal zero. Therefore $\lambda\mapsto\langle\p_\gamma(\lambda),u\rangle$ is a nontrivial real-analytic function on $(-1,1)$ and, in view of the identity principle for real-analytic functions, it necessarily has finitely many zeros in $(0,\delta)$. This consideration allows us for many choices of a parameter $\lambda_\gamma$ such that $\langle\p_\gamma,u\rangle\neq0$. In the course of the proof of the result to follow, we shall need to exploit the same argument involving analyticity in a more complicated setting. 
\end{remark}

\begin{theorem}[CH] There exists a choice of the parameters $(\lambda_\alpha)_{\alpha<\omega_1}$ such that the corresponding space $X=(c_0(\omega_1),\nn\cdot)$ does not contain any uncountable Auerbach system.
\end{theorem}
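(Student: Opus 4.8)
The plan is to choose the parameters $(\lambda_\alpha)_{\alpha<\omega_1}$ by transfinite recursion so that every configuration of data that could underlie an uncountable Auerbach system is anticipated at an appropriate stage and destroyed; the leverage comes from the fact that each such obstruction forbids only \emph{finitely many} admissible values of a single parameter, via the identity principle for real-analytic functions already used in the remark preceding the statement. By Lemma~\ref{"disjoint" supports} it suffices to preclude Auerbach systems $\{u_\delta;g_\delta\}_{\delta<\omega_1}$ in normal form (1) or (2). In either case put $\gamma_\delta:=\max\operatorname{supp}(g_\delta)$; discarding at most countably many indices makes $\delta\mapsto\gamma_\delta$ strictly increasing --- this also disposes of the degenerate case in which all $g_\delta$ are proportional to a single $\p_\gamma$, which violates biorthogonality outright. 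By Theorem~\ref{norm-attain finitely supported} each $\operatorname{supp}(g_\delta)$ is finite and, for $\beta<\delta$, lies in $[0,\gamma_\delta)$; and by the remarks following Corollary~\ref{coroll: 1-equiv to ell1}, $\langle\p_\eta,u_\delta\rangle=\operatorname{sgn}(g_\delta^\eta)$ for $\eta\in\operatorname{supp}(g_\delta)$, so every $u_\beta$ has a nonzero coordinate inside $[0,\gamma_\delta)$.

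For the recursion, I would first fix, using CH, an enumeration $\{w^\xi\}_{\xi<\omega_1}$ of $c_0(\omega_1)$ (legitimate, as $|c_0(\omega_1)|=\mathfrak c=\omega_1$). At stage $\gamma$, having chosen $\lambda_{\gamma'}$ for $\gamma'<\gamma$, I consider all \emph{local configurations at $\gamma$}: finite frames $\gamma_1'<\dots<\gamma_m'=\gamma$ in $[0,\gamma]$ together with $m$-tuples $(w^{\xi_1},\dots,w^{\xi_m})$ with $\xi_i<\gamma$ and $\operatorname{supp}(w^{\xi_i})\subseteq[0,\gamma)$; since $|\gamma|\le\omega$ there are only countably many. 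For each, the determinant of the matrix $\bigl(\langle\p_{\gamma_l'},w^{\xi_i}\rangle\bigr)_{i,l}$ depends on $\lambda_\gamma$ only through its last column, whose entries are $\langle\p_\gamma(\lambda_\gamma),w^{\xi_i}\rangle=\sum_k w^{\xi_i}(\sigma_\gamma(k))\lambda_\gamma^{\,k}$, so it is real-analytic in $\lambda_\gamma$; when it is not identically $0$ it has finitely many zeros in $(0,\delta)$. I then pick $\lambda_\gamma\in(0,\delta)$ avoiding all these (countably many) zeros together with $\{\lambda_{\gamma'}:\gamma'<\gamma\}$, which is possible since $(0,\delta)$ is uncountable.

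To derive a contradiction, let $\{u_\delta;g_\delta\}_{\delta<\omega_1}$ be an uncountable Auerbach system in normal form, reduced as above, and write $u_\beta=w^{\xi(\beta)}$. Let $C\subseteq\omega_1$ be the club of limit ordinals closed under the countable-valued maps $\beta\mapsto\gamma_\beta$, $\beta\mapsto\sup\operatorname{supp}(u_\beta)$, $\beta\mapsto\xi(\beta)$; fix $\rho\in C$ and put $F:=\operatorname{supp}(g_\rho)=\{\gamma_1'<\dots<\gamma_m'\}$, so $\gamma_m'=\gamma_\rho$. For $\rho\in C$ one gets $\gamma_\beta<\rho$ for $\beta<\rho$, hence $F\subseteq[\rho,\gamma_\rho]$; and $\operatorname{supp}(u_j)\subseteq[0,\rho)$, $\xi(j)<\rho\le\gamma_\rho$ for $j<\rho$. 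For any distinct $j_1,\dots,j_m<\rho$ the matrix $M=\bigl(\langle\p_{\gamma_l'},u_{j_i}\rangle\bigr)_{i,l}$ has $M\vec g_\rho=0$ with $\vec g_\rho=(g_\rho^{\gamma_l'})_l\neq0$ (using $j_i\neq\rho$), so $M$ is singular; but this is a configuration anticipated at stage $\gamma_\rho$, so were its determinant not identically zero in $\lambda_{\gamma_\rho}$ the construction would make $M$ invertible --- absurd. Hence every such determinant vanishes identically, i.e.\ the $\rho\times m$ matrix $N$ with entries $\langle\p_{\gamma_l'},u_j\rangle$ has rank $\le m-1$ for every value of $\lambda_{\gamma_\rho}$. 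Now I close by descending induction on $m$: if the first $m-1$ columns of $N$ are independent, the last column $\bigl(\sum_k u_j(\sigma_{\gamma_\rho}(k))\lambda^k\bigr)_j$ lies in their span for all $\lambda$, so every coordinate sequence $\bigl(u_j(\eta)\bigr)_{j<\rho}$ ($\eta<\omega_1$) lies in a fixed $(m-1)$-dimensional space; thus the matrix $(u_j(\eta))_{j<\rho,\,\eta<\omega_1}$ has rank $\le m-1$, i.e.\ $\dim\operatorname{span}\{u_j:j<\rho\}\le m-1$, contradicting the linear independence of $\{u_j\}_{j<\rho}$ (a biorthogonal system) since $m-1<\omega\le\rho$. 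If instead the first $m-1$ columns are dependent, a minimal dependence gives a nonzero functional $\psi=\sum_{l\in I}\mu_l\p_{\gamma_l'}$, supported on a proper subframe $\{\gamma_l':l\in I\}\subseteq F$, killing all $u_j$ with $j<\rho$; rerunning the dichotomy for this smaller frame (still anticipated, as its largest element is $\ge\rho$) eventually reaches a one-element frame $\{\gamma'\}$ with $\langle\p_{\gamma'},u_j\rangle=0$ for all $j<\rho$, whereas $\lambda\mapsto\langle\p_{\gamma'}(\lambda),u_j\rangle$ is a nonzero power series and the construction forced it nonzero --- the final contradiction.

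I expect the two genuine difficulties to be, first, the bookkeeping: one needs the configuration produced by an arbitrary (adversarial) Auerbach system to be genuinely among those anticipated, which is exactly why one passes to the club of closure points --- there the whole support of $g_\rho$ sits above $\rho$ (so its maximum is a ``fresh'' stage, unconstrained by the vectors in play) while the $u_j$'s with $j<\rho$ have both small supports and small enumeration indices. Second, one must rule out all degenerate configurations in which the pertinent analytic determinant is identically zero; this is what forces the descending induction on the frame size together with the careful exploitation of biorthogonality (linear independence of the $u_j$'s). A minor point that still has to be verified throughout is that the relevant power series and determinants are genuinely non-trivial, which rests on the non-vanishing of the $u_j$'s inside $[0,\gamma_\rho)$ recorded in the reduction step.
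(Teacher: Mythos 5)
Your proposal is correct in its essentials but takes a genuinely different route from the paper's at the decisive technical point. The paper engineers, by transfinite recursion, the \emph{non-vanishing} of every relevant determinant: its conditions \textbf{(A)} and \textbf{(B)} are arranged so that the cofactors along the last column are themselves earlier instances of the same conditions (hence non-zero), whence the determinant equals $\langle\p_{\beta_N}(\lambda),w\rangle$ for a non-zero vector $w$ and the identity principle applies; this is precisely why the paper needs the auxiliary hypothesis $\langle\p_{\beta_1},v_{\alpha_i}\rangle\neq0$ in \textbf{(A)} (supplied, in the application, by the common root from Lemma \ref{"disjoint" supports}(1)), and why its final contradiction involves only the first $N$ vectors of the putative Auerbach system and a single far-out functional. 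You instead enforce only a generic-position property --- avoid the zeros of those determinants that are not identically zero --- and dispose of the identically vanishing ones a posteriori: passing to a club of closure points, you show that an identically vanishing minor forces all coordinate columns $\left(u_j(\eta)\right)_{j<\rho}$ into a fixed finite-dimensional space, contradicting the linear independence of infinitely many $u_j$'s. Your recursion is therefore lighter (no cofactor induction, no analogue of condition (v)), at the price of a heavier contradiction step (club, rank argument, descending induction on sub-frames). Both versions share the skeleton: CH-enumeration of $c_0(\omega_1)$, choice of $\lambda_\gamma$ via the identity principle for power series, reduction through Lemma \ref{"disjoint" supports}, and a determinant-based contradiction.

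Two points need repair or elaboration. First, the assertion $F\subseteq[\rho,\gamma_\rho]$ is false in normal form (1): the common root $\gamma$ belongs to every ${\rm supp}(g_\delta)$ and is eventually $<\rho$. This does not sink the argument, because anticipation of a configuration only requires the \emph{largest} element of the relevant frame to dominate the indices and supports of the $u_j$'s in play, and the only sub-frame whose largest element could fall below $\rho$ is the singleton $\{\gamma\}$, which can never occur as a ``zero column'' since $|\langle\p_\gamma,u_j\rangle|=1$ by the remarks following Corollary \ref{coroll: 1-equiv to ell1}; but you must say this explicitly, as otherwise the terminal step of your descending induction is unjustified in case (1). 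Second, the inference from ``the last column lies, for every $\lambda$, in the span $V$ of the $\lambda$-independent columns'' to ``every coefficient vector $\left(u_j(\sigma_{\gamma_\rho}(k))\right)_{j<\rho}$ lies in $V$'' should be spelled out: restrict to finitely many rows on which membership in $V$ can be tested, pick a linear functional annihilating $V$ there, and apply the identity principle to the resulting scalar power series with bounded coefficients. With these additions your argument is complete.
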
\begin{proof} Let us denote by $c_0(\alpha)$ ($\alpha<\omega_1$) the subspace of $c_0(\omega_1)$ consisting of all vectors $u\in c_0(\omega_1)$ such that ${\rm supp}(u)\subseteq \alpha$. Since every element of $c_0(\omega_1)$ is countably supported, we have $c_0(\omega_1)=\cup_{\alpha<\omega_1}c_0(\alpha)$; moreover, every  space $c_0(\alpha)$, for $\omega\leq\alpha<\omega_1$, is isometric to $c_0$, hence it has cardinality the continuum. As a consequence, $|c_0(\omega_1)|=\mathfrak{c}$ too. Assuming (CH), we may therefore well order the non-zero vectors of $c_0(\omega_1)$ in an $\omega_1$-sequence $(v_\alpha)_{\alpha<\omega_1}$, \emph{i.e.}, $c_0(\omega_1)\setminus\{0\}=\{v_\alpha\}_{\alpha<\omega_1}$.\smallskip

As in the previous remark, we shall occasionally denote by $\p_\gamma(\lambda_\gamma)$ the functional $\p_\gamma$, whenever it will be desirable to stress the dependence of $\p_\gamma$ on the parameter $\lambda_\gamma$. Those parameters $(\lambda_\alpha)_{\alpha<\omega_1}$ will be chosen as to satisfy the conclusion of the following claim.

\begin{claim*} It is possible to choose the parameters $(\lambda_\alpha)_{\alpha< \omega_1}$ in such a way that the following two assertions {\bf (A)} and {\bf (B)} are satisfied.
\begin{description}
\item[(A)] For every $N\in\N$, $N\geq2$, for every choice of ordinal numbers $\alpha_1, \dots,\alpha_N<\omega_1$ and $\beta_1,\dots,\beta_N<\omega_1$ with the properties that:
\begin{romanenumerate}
\item $\{v_{\alpha_1},\dots,v_{\alpha_N}\}$ is a linearly independent set;
\item $\beta_1<\beta_2<\dots<\beta_N$;
\item $\alpha_1,\dots,\alpha_N<\beta_2$;
\item ${\rm supp}(v_{\alpha_1}),\dots,{\rm supp}(v_{\alpha_N})<\beta_2$;
\item $\langle\p_{\beta_1},v_{\alpha_i}\rangle\neq0$ for every $i=1,\dots,N$;
\end{romanenumerate}
one has:
$$\det\left(\left(\left\langle\p_{\beta_i},v_{\alpha_j}\right\rangle\right)_{i,j=1}^N\right)\neq0.$$

\item[(B)] For every $N\in\N$, for every choice of ordinal numbers $\alpha_1, \dots,\alpha_N<\omega_1$ and $\beta_1,\dots,\beta_N<\omega_1$ with the properties that:
\begin{romanenumerate}
\item $\{v_{\alpha_1},\dots,v_{\alpha_N}\}$ is a linearly independent set;
\item $\beta_1<\beta_2<\dots<\beta_N$;
\item $\alpha_1,\dots,\alpha_N<\beta_1$;
\item ${\rm supp}(v_{\alpha_1}),\dots,{\rm supp}(v_{\alpha_N})<\beta_1$;
\end{romanenumerate}
one has:
$$\det\left(\left(\left\langle\p_{\beta_i},v_{\alpha_j}\right\rangle\right)_{i,j=1}^N\right)\neq0.$$
\end{description}
\end{claim*}
\begin{proof}[Proof of the claim.] We shall argue by transfinite induction on $\gamma:=\beta_N<\omega_1$, observing that for $\gamma=0$ both conditions {\bf (A)} and {\bf (B)} are trivially satisfied, while $\p_0=e^*_0$ regardless of the choice of $\lambda_0$. We may therefore assume by the transfinite induction assumption that, for a certain $\gamma<\omega_1$, we have already chosen parameters $(\lambda_\alpha)_{\alpha<\gamma}$ with the corresponding functionals $\p_\alpha=\p_\alpha(\lambda_\alpha)$ in such a way that conditions {\bf (A)} and {\bf (B)} are satisfied for every choice of the ordinals as above, subject to the condition $\beta_N<\gamma$. In order to verify the claim for $\gamma$, we only need to define $\lambda_\gamma$ and therefore $\p_\gamma$; we shall be considering the function $\lambda\mapsto\p_\gamma(\lambda)$ and show that a suitable choice of $\lambda=\lambda_\gamma$ is possible.\smallskip

We shall first focus on choosing $\lambda$ in such a way to achieve condition {\bf (A)}. Let us select arbitrarily $N\geq2$ and ordinal numbers $\alpha_1,\dots,\alpha_N$ and $\beta_1,\dots,\beta_N$ satisfying conditions (i)--(v) and such that $\beta_N=\gamma$; note that in particular the functionals $\p_{\beta_1},\dots,\p_{\beta_{N-1}}$ have already been defined, and we only need to choose a suitable parameter $\lambda$ in $\p_{\beta_N}(\lambda)=\p_\gamma(\lambda)$. Observe that, by using the Laplace expansion for the determinant on the last column,
$$\det\left(\left(\left\langle\p_{\beta_i},v_{\alpha_j}\right\rangle\right) _{i,j=1}^N\right)=\sum_{j=1}^N(-)^{N+j}\left\langle\p_{\beta_N}(\lambda),v_{\alpha_j} \right\rangle d_j=\left\langle\p_{\beta_N}(\lambda),\sum_{j=1}^N(-)^{N+j}d_j v_{\alpha_j}\right\rangle,$$
where $d_j$ is the determinant of the $(N-1)\times(N-1)$ matrix obtained removing the $j$-th row and the $N$-th column from the original matrix $\left(\langle\p_{\beta_i},v_{\alpha_j}\rangle\right) _{i,j=1}^N$. In the case that $N=2$, then actually $d_j=\langle\p_{\beta_1},v_{\alpha_{3-j}}\rangle$ ($j=1,2$), whence $d_j\neq0$, according to (v). On the other hand, if $N\geq3$, then $d_j$ is the determinant of the matrix obtained from the action of the functionals $\{\p_{\beta_1},\dots,\p_{\beta_{N-1}}\}$ on the set of vectors $\{v_{\alpha_1},\dots,v_{\alpha_N}\}\setminus\{v_{\alpha_j}\}$. Those vectors and functionals plainly satisfy conditions (i)--(v) with $\beta_{N-1}<\gamma$ instead of $\gamma$; thereby, $d_j\neq0$ follows from the transfinite induction assumption.\smallskip

Consequently, in each case we may conclude that $d_j\neq0$ ($j=1,\dots,N$) and condition (i) then forces the vector $\sum_{j=1}^N(-)^{N+j}d_j v_{\alpha_j}$ to be non-zero. According to the remark preceding the proof, we may now deduce that the function 
$$\lambda\mapsto\left\langle\p_{\beta_N}(\lambda),\sum_{j=1}^N(-)^{N+j}d_j v_{\alpha_j}\right\rangle$$
is a non-trivial real-analytic function on $(-1,1)$ and consequently it has only finitely many zeros on the set $(0,\delta)$. However, due to conditions (ii) and (iii), there are only countably many choices for $N\in\N$, $\alpha_1,\dots,\alpha_N$ and $\beta_1,\dots,\beta_N$ as in {\bf (A)} and satisfying $\beta_N=\gamma$. Therefore, there exists a countable set $\Lambda_{\bf (A)}\subseteq(0,\delta)$ such that all the determinants appearing in {\bf (A)} and with $\beta_N=\gamma$ are different from zero, for every choice of $\lambda\in(0,\delta)\setminus\Lambda_{\bf (A)}$. With any such choice, condition {\bf (A)} is verified for $\gamma$.\smallskip

A very similar consideration also applies to condition {\bf (B)}. The argument is even simpler, and we therefore omit the straightforward modifications, since we don't need to distinguish between the cases $N=2$ and $N\geq3$, the above argument for $N\geq3$ now being applicable to every $N\geq1$; incidentally, this is the reason why we do not need condition (v) in this case. Therefore, we obtain a countable subset $\Lambda_{\bf (B)}$ of $(0,\delta)$ such that condition {\bf (B)} is verified for $\gamma$, whenever $\lambda\in(0,\delta)\setminus\Lambda_{\bf (B)}$.\smallskip

Finally, choosing any $\lambda_\gamma\in(0,\delta)\setminus(\Lambda_{\bf (A)}\cup\Lambda_{\bf (B)})$ and such that $\lambda_\gamma\neq\lambda_\alpha$ for $\alpha<\gamma$ then provides us with a functional $\p_\gamma$ for which both assertions {\bf (A)} and {\bf (B)} are satisfied for $\gamma$; therefore, the transfinite induction step is complete and so is the proof of the claim. 
\end{proof}

Having the claim proved, we may choose parameters $(\lambda_\alpha)_{\alpha<\omega_1}$ satisfying conditions {\bf (A)} and {\bf (B)} above; we then denote by $X:=(c_0(\omega_1),\nn\cdot)$ the space obtained as described in the first part of the section, where the functionals $(\p_\alpha)_{\alpha<\omega_1}$ are obtained from the presently chosen sequence $(\lambda_\alpha)_{\alpha<\omega_1}$. We may now conclude the proof, by showing that the space $X$ does not contain any uncountable Auerbach system.\smallskip

Assume by contradiction that such systems do exist. Then we may find one, say $\{u_\alpha;g_\alpha\}_{\alpha<\omega_1}$, that satisfies the conclusion to Lemma \ref{"disjoint" supports}. Moreover, according to Theorem \ref{norm-attain finitely supported} the sets ${\rm supp}(g_\alpha)$ ($\alpha<\omega_1$) are finite sets; therefore, we can also assume that they all have the same finite cardinality, say $N$. On the other hand, we shall presently show that both the cases contained in the conclusion to Lemma \ref{"disjoint" supports} are in contradiction with conditions {\bf (A)} and {\bf (B)}; this ultimately leads us to the desired contradiction and concludes the proof.\smallskip

Firstly, we show that the validity of {\bf (A)} rules out the possibility (1) in Lemma \ref{"disjoint" supports} to hold true. Assume by contradiction that the supports of $(g_\alpha)_{\alpha<\omega_1}$ satisfy condition (1) and let us write ${\rm supp}(g_\alpha):=\{\beta_1^\alpha,\dots\beta_N^\alpha\}$, where $\beta_1^\alpha<\beta_2^\alpha<\dots<\beta_N^\alpha<\omega_1$. According to assumption (1), we have $\beta_1^\alpha=\beta_1^\eta$ and $\{\beta_2^\alpha,\dots\beta_N^\alpha\}<\{\beta_2^\eta,\dots\beta_N^\eta\}$ for $\alpha<\eta<\omega_1$; it follows in particular that $\sup_{\alpha<\omega_1}\beta_2^\alpha=\omega_1$. Moreover, the non-zero vectors $\{u_1,\dots,u_N\}$ have been enumerated in the $\omega_1$-sequence $(v_\alpha)_{\alpha<\omega_1}$, so we can find indices $\alpha_1,\dots,\alpha_N$ such that $u_j=v_{\alpha_j}$ for every $j=1,\dots,N$. We may also fix an ordinal number $\overline{\alpha}<\omega_1$ such that $\alpha_j<\overline{\alpha}$ and ${\rm supp}(v_{\alpha_j})<\overline{\alpha}$ for $j=1,\dots,N$. \smallskip

Since $\sup_{\alpha<\omega_1}\beta_2^\alpha=\omega_1$, it is possible to choose an ordinal $\alpha<\omega_1$ with the property that $\beta_2^\alpha>\overline{\alpha}$; needless to say, we may also assume that $\alpha>N$. Let us set $\beta_j:=\beta_j^\alpha$, for such a choice of $\alpha$. With such a choice of the indices $\{\alpha_1,\dots\alpha_N\}$ and $\{\beta_1,\dots \beta_N\}$ it is apparent that requirements (ii)--(iv) in condition {\bf (A)} are satisfied; also (i) is undoubtedly valid. Therefore, we only need to check the validity of (v): in order to achieve this, note preliminarily that $\beta_1\in{\rm supp}(g_\alpha)$ for every $\alpha<\omega_1$, in particular for $\alpha=\alpha_j$. The comments following Corollary \ref{coroll: 1-equiv to ell1} and the fact that $g_{\alpha_j}$ attains its norm at $v_{\alpha_j}$ then assure us that actually $|\langle\p_{\beta_1},v_{\alpha_j}\rangle|=1$.\smallskip

Consequently, all the assumptions in condition {\bf (A)} have been verified and the validity of {\bf (A)} leads us to the conclusion that
$$\det\left(\left(\left\langle\p_{\beta_i},v_{\alpha_j}\right\rangle\right) _{i,j=1}^N\right)\neq0.$$
On the other hand, we may clearly write $g_\alpha:=\sum_{i=1}^Nc_i\p_{\beta_i}$ for some choice of numbers $c_i$ ($i=1,\dots,N$)~with $\sum_{i=1}^N|c_i|=1$. Since $\alpha>N$ we have
$$0=\langle g_\alpha,u_j\rangle=\langle g_\alpha,v_{\alpha_j}\rangle=\sum_{i=1}^Nc_i \langle\p_{\beta_i},v_{\alpha_j}\rangle\qquad j=1,\dots,N.$$
In matrix form, the present equations read
$$\begin{pmatrix}\langle\p_{\beta_1},v_{\alpha_1}\rangle &\dots& \langle\p_{\beta_N} ,v_{\alpha_1}\rangle\\ \vdots && \vdots\\ \langle\p_{\beta_1},v_{\alpha_N}\rangle &\dots& \langle\p_{\beta_N},v_{\alpha_N}\rangle \end{pmatrix}\cdot \begin{pmatrix}c_1\\ \vdots\\c_N\end{pmatrix}=\begin{pmatrix}0\\ \vdots\\0\end{pmatrix};$$
this obviously contradicts $\det\left(\left(\left\langle\p_{\beta_i},v_{\alpha_j}\right\rangle\right) _{i,j=1}^N\right)\neq0$ and therefore clause (1) in Lemma \ref{"disjoint" supports} cannot occur.\smallskip

A very similar argument, which we only sketch, proves that (2) is in contradiction with {\bf (B)}. In fact, under the assumption of the validity of (2), and keeping the above notation for ${\rm supp}(g_\alpha):=\{\beta_1^\alpha,\dots\beta_N^\alpha\}$, we conclude that $\sup_{\alpha<\omega_1}\beta_1^\alpha=\omega_1$; we also select $\alpha_1,\dots,\alpha_N$ and $\overline{\alpha}$, proceeding in the same way as above. We are now in position to choose $\alpha<\omega_1$ such that $\beta_1^\alpha>\overline{\alpha}$ and $\alpha>N$ and define $\beta_j:=\beta^\alpha_j$, for such a choice of $\alpha$. Having chosen $\{\alpha_1,\dots\alpha_N\}$ and $\{\beta_1,\dots \beta_N\}$ in such a way, requirements (i)--(iv) in condition {\bf (B)} are satisfied. Therefore {\bf (B)} assures us that
$$\det\left(\left(\left\langle\p_{\beta_i},v_{\alpha_j}\right\rangle\right) _{i,j=1}^N\right)\neq0$$
and a contradiction follows from \emph{verbatim} the same argument as in the previous case.

Consequently, both clauses (1) and (2) in the conclusion to Lemma \ref{"disjoint" supports} fail to hold and the contrapositive form to Lemma \ref{"disjoint" supports} itself implies that $X$ contains no uncountable Auerbach system, thereby concluding the argument.
\end{proof}

\begin{remark} In the above claim one could have replaced condition {\bf (B)} above with the following condition {\bf (C)}.
\begin{description}
\item[(C)] For every choice of a vector $v_\alpha$ such that:
\begin{romanenumerate}
\item $\alpha<\gamma$;
\item ${\rm supp}(v_\alpha)<\gamma$;
\end{romanenumerate}
one has
$$\langle\p_\gamma,v_\alpha\rangle\neq0.$$
\end{description}
The main motivation why one may consider the present condition {\bf (C)} is clearly that {\bf (B)} is way more complicated than {\bf (C)}, even though {\bf (A)} and {\bf (B)} are actually quite similar. On the other hand, the drawback of {\bf (C)} is that {\bf (A)}$\&${\bf (B)} are exactly the conditions needed to face the two possible cases contained in the conclusion to Lemma \ref{"disjoint" supports}, while the argument exploiting {\bf (A)}$\&${\bf (C)} would be somewhat more involved.
\end{remark}

\end{document}